\newcommand{\co}{{\mathbb C}}
\newcommand{\R}{{\mathbb R}}
\newcommand{\re}{{\mathbb R}}
\newcommand{\z}{{\mathbb Z}}
\newcommand{\Z}{{\mathbb Z}}
\newtheorem{theorem}{Theorem}
\newtheorem{propos}{Proposition}
\newtheorem{lemma}{Lemma}
\newtheorem{cor}{Corollary}
\newtheorem{ex}{Example}
\newtheorem{definition}{Definition}
\newtheorem{remark}{Remark}
\newtheorem{proof}{Proof}
\newtheorem{mytheorem}{Theorem}
\DeclareMathOperator{\Bs}{Bs}
\date{}
\author{
Tatyana Zaitseva
\thanks{Moscow Center for Fundamental and Applied Mathematics, Moscow State University, Department of Mechanics and Mathematics, Russia {e-mail: \tt\small
zaitsevatanja@gmail.com}} 
}
\title{Multivariate tile B-splines 
\thanks{
This research was supported by the Russian Science Foundation (project no. 21-11-00131) at Lomonosov Moscow State University.
}}
\begin{document}
\maketitle

\begin{abstract}

Tile B-splines in $\mathbb{R}^d$ are defined as autoconvolutions of the indicators of tiles, which are special self-similar compact sets whose integer translates tile the space $\mathbb{R}^d$. These functions are not piecewise-polynomial, however, being direct generalizations of classical B-splines, they enjoy many of their properties and have some advantages. In particular, the precise values of the H{\" o}lder exponents of the tile B-splines are computed in this work. They sometimes exceed the regularity of the classical B-splines. The orthonormal systems of wavelets based on the tile B-splines are constructed and the estimates of their exponentional decay are obtained. Subdivision schemes constructed by the tile B-splines demonstrate their efficiency in applications. It is achieved by means of the high regularity, the fast convergence, and small number of the coefficients in the corresponding refinement equation. 

\bigskip

\noindent \textbf{Key words:} {\em B-splines, self-affine tilings, tiles, subdivision schemes, wavelets, H{\" o}lder regularity, joint spectral radius}
\smallskip

\begin{flushright}
\noindent  \textbf{AMS 2010 subject classification} {\em 42C40, 41A15, 52C22, 68U05}

\end{flushright}

\end{abstract}
\bigskip

\vspace{1cm}

\begin{section}{Introduction} \label{intro}
B-splines represent one of the most famous and simple piecewise-polynomial bases. They are widely studied in the literature (see, for example, \cite{Boor}). B-splines are used in the construction of orthogonal Battle-Lemarie wavelets \cite{Daub, NPS, Woj}, in effective algorithms of piecewise-polynomial approximation \cite{Boor, Shad, BVR93, P06, Ter}, in approximate formulas for Gaussian distribution, in formulas for volumes of slices of the multivariate cube, etc. Depending on application, different B-splines may be considered, such as those with irregular knots, penalized B-splines, B-splines defined on various areas, etc. There exist several ways to construct B-splines in the multivariate case \cite{BHR, CCJZ, VBU}. The most popular one is the direct product of the univariate B-splines. The common property of this and other approaches is that those B-splines are indeed splines, i.e., piecewise polynomial functions. 

However, there is another natural generalization of B-splines that exploits the fact that the univariate B-spline of order $k$ is the convolution of $k + 1$ indicator functions of the segment $[0, 1]$. In this work we consider \textit{tile B-splines} defined as an autoconvolution of the indicator of a special compact set (tile). Similar construction was considered in \cite{Zakh2, Zube}, see Remark \ref{zz} for details. Each tile is a union of its several contractions by means of affine operators with the same linear part. Tiles are known in wavelet theory since they are a key ingredient in the efficient approach for construction of multivariate Haar systems developed in the works of Lagarias, Wang, Gr\"ochenig, Haas and others. (\cite{LW97, GH, GM}). The case when a tile consists of two contractions is especially interesting, we call such tiles \textit{two-digit} tiles or \textit{2-tiles}. The properties of systems which are based on two-digit tiles are in some sense closest to the univariate case.  
On the plane there are three types of affinely non-equivalent 2-tiles, in $\R^3$ there are seven types. In this work we consider in detail B-splines based on these three classes of plane 2-tiles (we call them Square, Dragon, and Bear). 

The tile B-splines inherit many advantages of classical B-splines, however, their use is complicated due to the following problems: 

1) How to compute the values of the tile B-splines and of their derivatives? Unlike the classical B-splines, the tile B-splines are not defined explicitly, their straightforward computation requires the calculation of convolution, i.e., a numerical integration. 

2) How to analyze their properties, in particular, the regularity, which is a key parameter in many applications (for example, for the wavelet-Galerkin method)? 

3) How effective are tile B-splines in applications? Is it possible to construct wavelet systems and subdivision schemes (SubD algorithms) generated by such splines? 

In this work we make an attempt to answer all these questions and to apply tile B-splines to the construction of wavelets and for the design of subdivision schemes in the geometric surface modeling. 

Our results would be of theoretical interest only, if the tile B-splines did not have the advantages over the classical B-splines and were not effective in applications. 
However, some of the constructed two-digit B-splines (for example, the so-called Bear-3, Bear-4) are surprisingly smoother than the  corresponding classical B-splines (Theorem \ref{th_smooth}). Thus, the standard (direct product) B-splines do not possess the highest regularity. 
The rate of convergence of some numerical algorithms based on B-splines, such as the cascade algorithm for computation of coefficients of wavelet decomposition, subdivision algorithms for curve and surface modeling \cite{CDM, CC}, depends on the regularity of B-splines. Therefore, using tile B-splines we obtain faster convergence and better quality of limit functions and surfaces. 

Moreover, we show that one of the classes of two-digit B-splines (the so-called Square-$(n + 1)$) coincides with the classical multivariate B-splines of order $n$, but its refinement equation has much fewer nonzero coefficients. Therefore, the complexity of one its iteration in numerical algorithms is lower (Theorem \ref{th_subd}). 

This work is organized as follows: in Section \ref{par_tile} we recall the definition of tiles and their properties, Section \ref{classic} is devoted to the univariate B-splines. In Section \ref{tile_spl} we define tile B-splines and prove their fundamental properties. In Sections \ref{par_orthogonalization} -- \ref{phik} we construct the orthogonalization of these B-splines in two-digit case in a standard way. Further, in Section \ref{wavelet}, we find the corresponding wavelet function, i.e., we construct a wavelet basis based on two-digit tile B-splines, similarly to Battle-Lemarie wavelets.  
Since the wavelet function after orthogonalization is no longer compactly supported, it is important to analyze its rate of decay. It is estimated in Section \ref{par_finite} by means of the  multivariate complex analysis (the Laurent series, the Reinhardt domains). This gives a good approximation of tile wavelet function with finite functions. 
In Section \ref{par_smooth} we compute H{\" o}lder regularity of tile B-splines. 
Finally, in Section \ref{par_subdiv} tile B-splines are applied to subdivision schemes for surface modeling.  These theoretical results are the basis for our software package for construction of B-splines, wavelets and smoothness calculation \cite{gitTZ}. 
\end{section}

\begin{section}{Tiles} \label{par_tile}
Every integer matrix $M \in \Z^{d \times d}$ defines the partition of the lattice $\Z^d$ to $m = |\det M|$ equivalent classes $y \sim x \Leftrightarrow y - x \in M\Z^d$. Choosing one representative $d_i \in \Z^d$ from each coset, we obtain a \textit{digit set} $D(M) = \{d_0, \ldots, d_{m - 1}\}$. Assume that $d_0 = 0$. In the univariate case, if $M$ is a number, $D(M)$ is a digit set in the number system with the base $m$. Thus, an integer matrix and a proper digit set define a ``number system'' in $\Z^d$. Further we assume that the matrix $M$ is expanding, i.e., all of its eigenvalues are larger than one in absolute value. In this case, similarly to the unit segment we can consider the following set in this number system  
$$G = \left\{ \sum \limits_{k = 1}^{\infty}M^{-k}\Delta_k \mid \Delta_k \in D(M)\right\}.$$ 

It is known (see, for example, \cite{GH, GM}) that for every expanding integer matrix $M$ and for an arbitrary digit set $D(M)$, the set $G$ is compact, has a non-empty interior and possesses the following properties: 

\begin{enumerate}
\item the Lebesgue measure $\mu(G)$ is a positive integer;
\item (self-affinity) $G = \bigcup_{\Delta \in D(M)} {M^{-1}(G + \Delta)}$, all the sets $M^{-1}(G + \Delta)$ have pairwise intersections of measure zero;
\item the indicator $\varphi = \chi_G(x)$ of the set $G$ satisfies a refinement equation almost everywhere on $\R^d$; 
$$\varphi(x) = {\sum \limits_{\Delta \in D(M)}{\varphi(Mx - \Delta)}}, \quad x \in \R^d;$$
\item $\sum_{k \in \Z^d}\varphi(x + k) \equiv \mu(G)$ a.e., i.e., integer shifts of $\varphi$ cover $\R^d$ with $\mu(G)$ layers;
\item $\mu(G) = 1$ if and only if the function system $\{\varphi(\cdot + k)\}_{k \in \Z^d}$ is orthonormal.
\end{enumerate}

The last property allows us to introduce the following notion: 

\begin{definition}
Let us fix an expanding matrix $M \in \Z^{d\times d}$ and the digit set $D(M) = \left\{d_0, \ldots, d_{m - 1}\right\}$. If the measure of the set $$G = \left\{ \sum \limits_{k = 1}^{\infty}M^{-k}\Delta_k \mid \Delta_k \in D(M)\right\}$$ is one, i.e., all integer shifts of $G$ form a  disjoint, up to measure zero, covering of $\R^d$, then $G$ is called a \textit{tile}.  
\end{definition}

In some sense, a tile is a multivariate generalization of the segment $[0, 1]$ for the ``number system'' with the matrix base $M$. 

\begin{ex}\label{ex1}
For the univariate case $d = 1$, if $M = 2$, we can choose $D(M) = \{0, 1\}$. Then 
$$G = \left\{ \sum \limits_{k = 1}^{\infty}2^{-k}\Delta_k \mid \Delta_k \in \{0,1\}\right\} = [0, 1].$$ 
The segment $[0, 1]$ satisfies all the properties above. In particular, its indicator $\varphi(x) = \chi_{[0, 1]}$ satisfies a refinement equation $\varphi(x) = \varphi(2x) + \varphi(2x - 1)$. The segment $[0, 1]$ is indeed a tile, its integer shifts tile the whole line $\R$. 
\end{ex}

\begin{ex} \label{ex2} Consider the matrix $M = \begin{pmatrix}1 & 2 \\ 1 & -1\end{pmatrix}$, $m = |\det M| = 3$. The possible choice of digits is $D(M) = \left\{\begin{pmatrix}0 \\ 0\end{pmatrix}, \begin{pmatrix}1 \\ 0\end{pmatrix}, \begin{pmatrix}0 \\ 1\end{pmatrix}\right\}$. The corresponding set $G$ is depicted in Fig. \ref{extile}. Fig. \ref{exaff} illustrates the self-affinity of the set $G$, i.e., it shows the partition of $G$ to $m = 3$ affinely-similar copies. In this case the set $G$ is a tile, the tiling of the plane with its integer shifts is depicted in the Fig. \ref{extiling}. The indicator $\varphi = \chi_{G}$ satisfies the refinement equation 
$$\varphi(x) = \varphi(Mx) + \varphi\left(Mx - \begin{pmatrix}1 \\ 0\end{pmatrix}\right) + \varphi\left(Mx - \begin{pmatrix}0 \\ 1\end{pmatrix}\right).$$
\end{ex}

\begin{figure}[ht!]
\begin{minipage}[h]{0.32\linewidth}
\center{\begin{subfigure}[t]{\textwidth}{\includegraphics[width=1\linewidth]{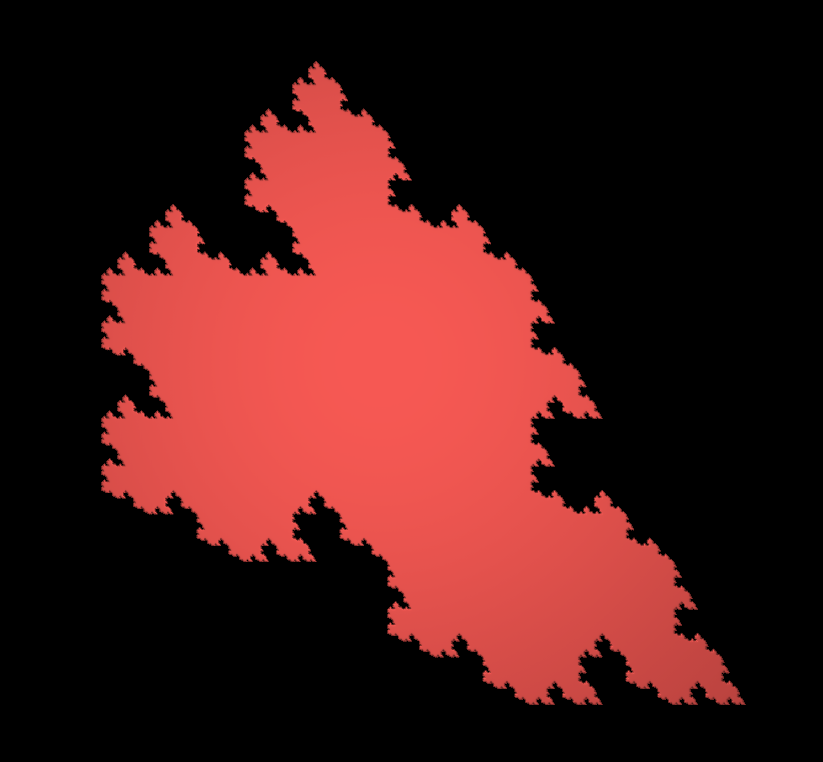} \caption{Set $G$.} \label{extile} }\end{subfigure}}
\end{minipage}
\hfill
\begin{minipage}[h]{0.32\linewidth}
\center{\begin{subfigure}[t]{\textwidth}{\includegraphics[width=1\linewidth]{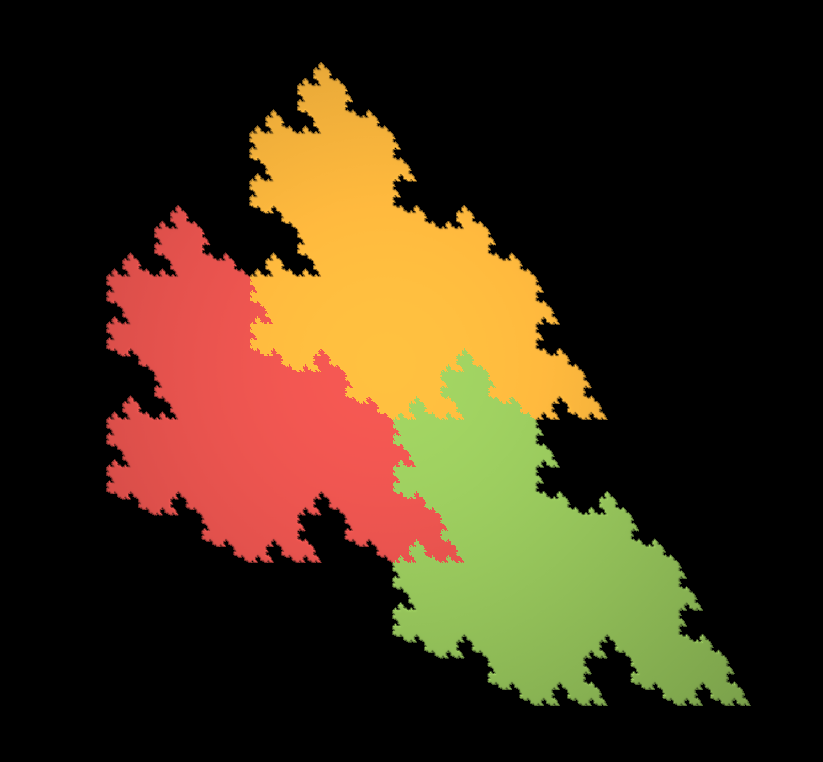} \caption{Self-affinity of $G$.} \label{exaff} }\end{subfigure}}
\end{minipage}
\hfill
\begin{minipage}[h]{0.32\linewidth}
\center{\begin{subfigure}[t]{\textwidth}{\includegraphics[width=1\linewidth]{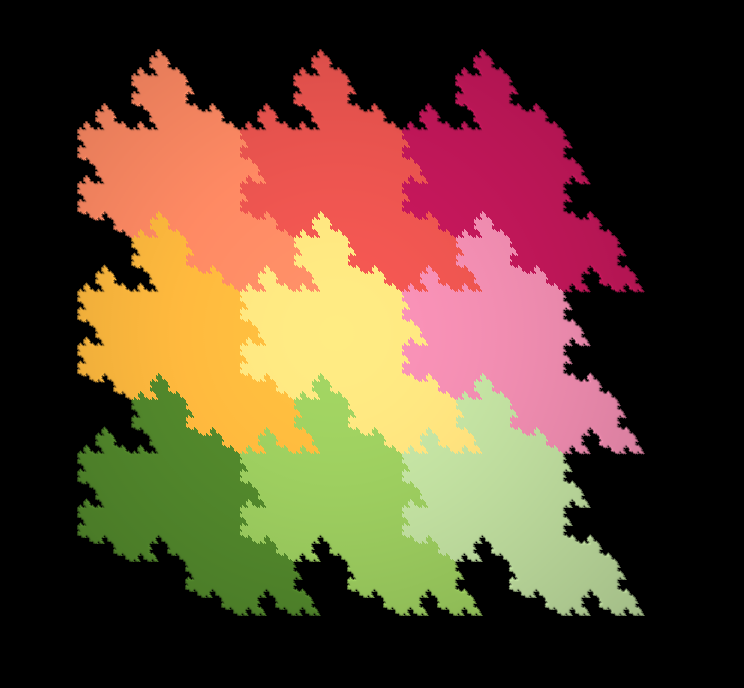} \caption{Tiling of the plane.} \label{extiling} }\end{subfigure}}
\end{minipage}
\caption{The tile $G$ from example \ref{ex2} and its properties.}
\label{pic_tile}
\end{figure} 

Each tile defines its own multivariate Haar basis in $\R^d$ (the construction is described, for example, in \cite{LW97, GH, GM, CHM}). Unlike the univariate case, in which the Haar system is generated by the shifts and contractions of a unique function, in the multivariate case $m - 1= |\det M| - 1$ generating functions required. The case $|\det M| = 2$ is especially interesting since there is only one generating function. In what follows, we mainly restrict ourselves to this case. 
\end{section}

\begin{section}{The classical B-splines}\label{classic}

Recall that the univariate cardinal \textit{B-spline} of order $n$, denoted as $B_n$, is the convolution of $n + 1$ functions $\chi_{[0, 1]}$ (see Fig. \ref{oned}). In particular, $B_0 = \chi_{[0, 1]}$, $B_1 = \chi_{[0, 1]} * \chi_{[0, 1]}$, etc. 

\begin{figure}[ht]
\centering
\includegraphics[width=1\linewidth]{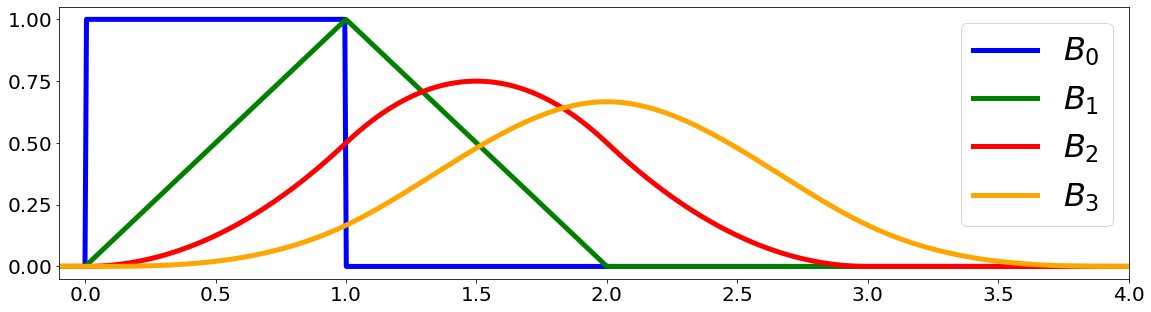}
\caption{Univariate B-splines $B_0, B_1, B_2, B_3$.}
\label{oned}
\end{figure}

B-spline $B_n$ of order $n$ belongs to $C^{n - 1}(\R)$; it is an algebraic polynomial of degree $n$ on each of the segments $[k, k + 1)$, $k = 0, \ldots, n$, out of the segment $[0, n + 1]$ the function $B_n$ is equal to zero. 

Recall that a univariate \textit{refinable function} $\varphi(x)$ with the dilation coefficient $2$ is a solution of the univariate \textit{refinement equation}
\begin{equation}\label{mu}
\varphi(x) = \sum \limits_{k = 0}^{N} c_{k} \varphi(2x - k),
\end{equation}
and the \textit{mask} of this equation is the trigonometric polynomial $$a(\xi) = \frac{1}{2} \sum \limits_{k = 0}^{N} c_{k} e^{-2 \pi i k \xi}.$$

We always assume that $\int \limits_{\R} \varphi(x) dx \ne 0$ and normalize the solutions of refinement equation so that $\int \limits_{\R} \varphi(x) dx = 1$. Applying the Fourier transform to both sides of the equation \eqref{mu}, we obtain 
\begin{equation}
\widehat{\varphi}(2 \xi)=a(\xi) \widehat{\varphi}(\xi).
\label{eq_fou}
\end{equation}
Substituting $s = 0$ and using the equality $\widehat{\varphi}(0) = \int \limits_{\R} \varphi(x) dx = 1$, we obtain  
$\widehat{\varphi}(0)=a(0) \widehat{\varphi}(0)$, hence, $a(0) = 1, \sum_{k=0}^N c_{k}=2.$ 
If two functions $\varphi_1$, $\varphi_2$ satisfy refinement equations with masks $a_1(x)$, $a_2(x)$, then their convolution also satisfies a refinement equation with the mask $a_1(x)a_2(x)$ due to \eqref{eq_fou}.  
Since the function $\varphi(x) = \chi_{[0, 1]}$ satisfies the refinement equation $\varphi(x) = \varphi(2x) + \varphi(2x - 1)$ with the mask $a_0(\xi) = \frac{1 + e^{-2\pi i \xi}}{2}$ (Example \ref{ex1}), the B-spline $B_n$, which is a convolution of $n + 1$ functions $\varphi(x) = \chi_{[0, 1]}$, also satisfies the refinement equation with the mask $a_n(\xi) = a_0^{n + 1}(\xi)$. 
Thus, the function $B_n$ is a solution of the refinement equation with the mask $\left(\frac{1 + e^{-2\pi i \xi}}{2}\right)^{n + 1}$.  The coefficients of this equation are   
 $c_0 = 2^{-n}{{{n+1}\choose0}}$, $c_1 = 2^{-n}{{n+1}\choose1}$, $\ldots$, $c_{n + 1} = 2^{-n}{{n+1}\choose {n+1}}$, where ${{{n+1}\choose k}}$ are the binomial coefficients. 

The classical generalization of B-splines to multivariate functions is a direct product of several univariate B-splines: $B_n(x_1, \ldots, x_d) = B_n(x_1) \cdots B_n(x_d)$. This function also satisfies a refinement equation. Its mask is $a_n(\xi_1, \ldots, \xi_d) = a_n(\xi_1) \cdots a_n(\xi_d)$. In particular, in two-dimensional case the B-spline of zero order has the form  
$$B_0(x, y) = \chi_{[0, 1]}(x) \chi_{[0, 1]}(y) = \chi_{[0, 1]^2}(x, y).$$ 
Its refinement equation can be obtained by multiplication of the univariate refinement equations: 
\begin{multline*}
B_0(x, y) = \left(\chi_{[0, 1]}(2x) + \chi_{[0, 1]}(2x - 1)\right)\left(\chi_{[0, 1]}(2y) + \chi_{[0, 1]}(2y - 1)\right) = \\
= B_0(2x, 2y) + B_0(2x - 1, y) + B_0(2x, 2y - 1) + B_0(2x - 1, 2y - 1), 
\end{multline*}
or by obtaining its coefficients from the mask $a_0(\xi_1, \xi_2)$. 
The equation is illustrated in Fig. \ref{2d0}. 
The B-spline $B_n(x, y)$ of arbitrary order $n$ is equal to the convolution of $n + 1$ B-splines $B_0(x, y)$: 
\begin{multline*}
B_n(x, y) = B_n(x)B_n(y) = (\chi_{[0, 1]}(x) * \ldots * \chi_{[0, 1]}(x)) (\chi_{[0, 1]}(y) * \ldots * \chi_{[0, 1]}(y)) = \\
= B_0(x, y) * \ldots * B_0(x, y).
\end{multline*}
The linear B-spline $B_1(x, y)$ is depicted in Fig. \ref{2d1}. The same holds for the case of $d$ variables. 
 
 \begin{figure}[h]
\begin{center}
\begin{minipage}[h]{0.4\linewidth}
\includegraphics[width=1\linewidth]{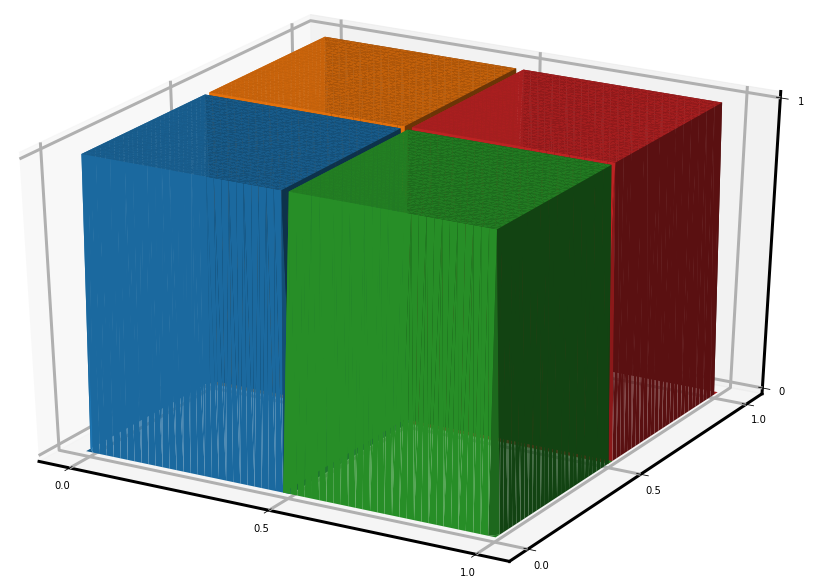}
\caption{Refinement equation for $B_0(x, y)$.}
\label{2d0}
\end{minipage}
\hfill
\begin{minipage}[h]{0.4\linewidth}
\includegraphics[width=1\linewidth]{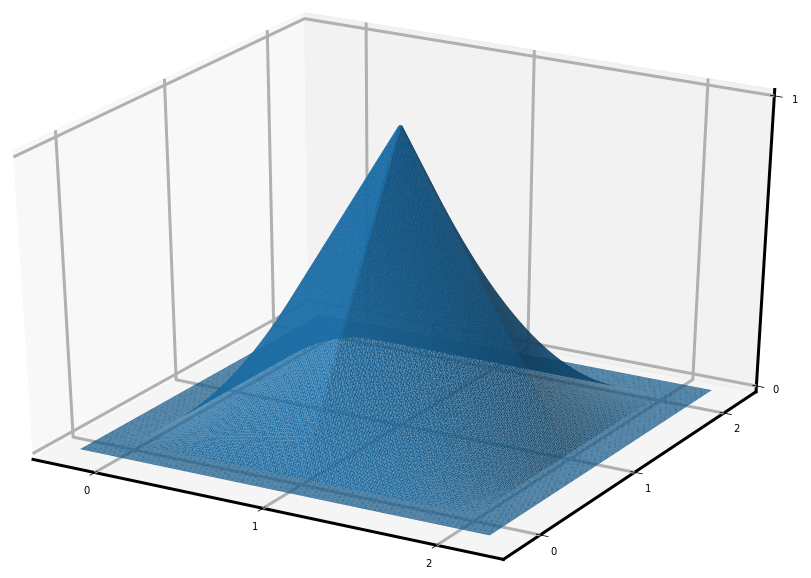}
\caption{Linear B-spline of two variables.}
\label{2d1}
\end{minipage}
\end{center}
\end{figure}

Thus, the B-spline $B_n(x_1, \ldots, x_d)$ is a solution of refinement equation with $(n + 2)^d$ positive coefficients. Since the number of  coefficients grows exponentionally in dimension, the use of the classical $d$-variate B-splines often leads to non-effective algorithms when $d$ is large. One of them, the subdivision algorithm, is considered in detail in Section \ref{par_subdiv}. 
\textit{The cascade algorithm} (the fast discrete wavelet transform) is closely related to this method and is used to obtain the coefficients of wavelet expansion. 
Its complexity also depends on the number of nonzero coefficients in the refinement equation. 
Using another construction of multivariate B-splines one can obtain the less number of coefficients and, in some cases, higher smoothness without loss of the structure and of approximation properties of the classical B-splines. 
In the next section we define B-splines based on tiles. By an appropriate choice of a tile it is possible to obtain only $(n + 2)$ coefficients of refinement equation independent of the dimension $d$. We show that some of these tile B-splines have a higher smoothess than the classical B-splines of the same order. 
\end{section}

\begin{section}{The construction of tile B-splines}\label{tile_spl}
We start with a definition of the tile B-splines. 

\begin{definition} For a given tile $G \subset \R^d$ and for an integer number $n \ge 0$, we say that the convolution of $n + 1$ functions $\chi_G$ is the \textit{tile B-spline} of order $n$ and write $B_n^G$. 
\end{definition}

In particular, $B_0^G = \chi_G$, $B_1^G = \chi_G * \chi_{G}$. 

\begin{definition} The convolution of $n + 1$ functions $\chi_G * \chi_{-G}$ is called the \textit{symmetrized tile B-spline} of order $n$ and is denoted by $\Bs_{n}^G$. 
\end{definition}

We fix a tile $G$ and further use the short notation $B_n = B_n^G, \Bs_n = \Bs_n^G$. 

In the multivariate case we consider refinement equations with a dilation matrix coefficient $M$ of the form 
\begin{equation} \label{multi_mu}
\varphi(x) = \sum \limits_{k \in \Z^d} c_{k} \varphi(Mx - k),
\end{equation}
the \textit{mask} of this equation is the trigonometric polynomial of variables $\xi_1, \ldots, \xi_d$ $$a(\xi) = \frac{1}{m} \sum \limits_{k \in \Z^d} c_{k} e^{-2 \pi i (k, \xi)},$$ where $m = |\det{M}|$.  

The tile B-spline $B_0^G$, i.e., the indicator of the tile $\chi_G$, a.e. satisfies the refinement equation
$$\chi_G(x) = {\sum \limits_{\Delta \in D(M)}{\chi_G(Mx - \Delta)}}, \quad x \in \R^d$$
(see Section \ref{par_tile}). In this case $c_k = 1$ for all $k \in D(M)$ and $c_k = 0$ for $k \notin D(M)$, and the mask is given by the formula 
$$a_0(\xi) = \frac{1}{m} \sum \limits_{\Delta \in D(M)} e^{-2\pi i (\Delta, \xi)}.$$ Similarly to the univariate case, applying the Fourier transform to both sides of the refinement equation \eqref{multi_mu} we obtain 
$$\widehat{\varphi}(\xi) = a(M_1^T \xi) \widehat{\varphi}(M_1^T \xi)$$ or 
\begin{equation} \label{fourie}
\widehat{\varphi}(M^T\xi) = a(\xi) \widehat{\varphi}(\xi).
\end{equation}
From this it follows that if two functions $\varphi_1$, $\varphi_2$ satisfy refinement equations with masks $a_1(x)$, $a_2(x)$ and with the dilation coefficient $M$, then their convolution also satisfies a refinement equation with the mask $a_1(x)a_2(x)$ and with the dilation coefficient $M$. 
Thus, similarly to the univariate case, the mask $a_n$ of the tile B-spline $B_n$ satisfies the formula $a_n = a_0^{n + 1}$. From this the  coefficients of the refinement equation \eqref{multi_mu} of the tile B-splines can be found explicitly: 

\begin{propos}\label{prop_mu}
Let $\varphi = B_n^G$ be a tile B-spline, where the tile $G$ is constructed by the matrix $M$ and the set of digits $D = \{d_0, \ldots, d_{m-1}\}$. 
For every vector $k \in \Z^d$, we denote by $C_k$ the number of its representations of the form $k = s_1 + \ldots + s_{n + 1}$ for all ordered sets $(s_1, \ldots, s_{n + 1})$, $s_i \in D$, with possible repetitions. Then the refinement equation of the tile B-spline $B_n^G$ has the form 
$$\varphi(x) = m^{-n} \sum \limits_{k \in \Z^d} C_k \varphi(Mx - k).$$
\end{propos}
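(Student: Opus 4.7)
The strategy is to exploit the factorization of the mask already established in the paragraph preceding the proposition and then count monomials in a multinomial expansion.

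First, I would recall from the text that the indicator $\chi_G = B_0^G$ satisfies the refinement equation with mask
$$a_0(\xi) \;=\; \frac{1}{m}\sum_{\Delta \in D} e^{-2\pi i (\Delta,\xi)},$$
and that the identity \eqref{fourie}, namely $\widehat{\varphi}(M^T\xi) = a(\xi)\widehat{\varphi}(\xi)$, implies that the convolution of two refinable functions with the same dilation matrix $M$ is refinable with mask equal to the product of the two masks. Iterating this $n$ times gives that $B_n^G = \chi_G * \cdots * \chi_G$ ($n+1$ factors) is refinable with mask
$$a_n(\xi) \;=\; a_0(\xi)^{\,n+1}.$$

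Next, I would expand the power directly:
$$a_n(\xi) \;=\; \frac{1}{m^{n+1}} \left(\sum_{\Delta \in D} e^{-2\pi i (\Delta,\xi)}\right)^{\!n+1} \;=\; \frac{1}{m^{n+1}} \sum_{(s_1,\ldots,s_{n+1}) \in D^{n+1}} e^{-2\pi i (s_1+\cdots+s_{n+1},\xi)}.$$
Collecting exponentials by their frequency vector $k = s_1+\cdots+s_{n+1}\in\Z^d$, the number of tuples producing a given $k$ is precisely $C_k$ by definition, so
$$a_n(\xi) \;=\; \frac{1}{m^{n+1}} \sum_{k \in \Z^d} C_k\, e^{-2\pi i (k,\xi)}.$$

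Finally, I would compare this with the general form of the mask of a multivariate refinement equation, $a(\xi) = \frac{1}{m}\sum_k c_k\, e^{-2\pi i (k,\xi)}$, read off $c_k = m^{-n} C_k$, and substitute into \eqref{multi_mu} to obtain the stated equation. A small remark is in order that the sum over $k\in\Z^d$ is finite because $D$ is finite (so only finitely many $C_k$ are nonzero), and that equality of masks (trigonometric polynomials) yields equality of the coefficients $c_k$ uniquely.

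There is essentially no serious obstacle: the only point that needs care is the bookkeeping of normalization constants ($m^{-(n+1)}$ from $a_0^{n+1}$ versus $m^{-1}$ in the definition of the mask, giving $m^{-n}$ in front of $C_k$) and the justification that the convolution inherits the refinement equation a.e.\ from the corresponding Fourier identity, which is already done in the preceding discussion.
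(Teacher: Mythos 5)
Your proof is correct and follows essentially the same route as the paper, which derives the proposition directly from the identity $a_n = a_0^{n+1}$ stated just before it: expanding the $(n+1)$-st power of $a_0$, collecting exponentials by the frequency vector $k$, and matching the normalization $m^{-(n+1)}$ against the $\frac{1}{m}$ in the definition of the mask to read off $c_k = m^{-n}C_k$. Your added remarks on finiteness of the sum and uniqueness of the coefficients are fine but not needed beyond what the paper's preceding discussion already provides.
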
 
Note that the numbers $C_k$ can be explicitly expressed using multinomial coefficients. 

The symmetrized tile B-spline also satisfies a refinement equation. Indeed, if $\varphi(x)$ satisfies a refinement equation with coefficients $c_k$ and with mask $a(\xi)$, then $\varphi(-x)$ satisfies the refinement equation with the coefficients $c_{-k}$ and with the mask $\bar{a}(\xi)$. 
Therefore, $\chi_G * \chi_{-G}$ corresponds to the refinement equation with mask $|a_0|^2$, and $\Bs_n$ corresponds to the mask $|a_0|^{2(n + 1)}$. Note that for symmetrized tile B-splines coefficients of mask and its values for all $\xi \in \R^d$ are real.  

As the set of the coefficients of the polynomial $a_n$ depends only on the order $n$ and on the digit set $D$, the set of coefficients of  refinement equation of the tile B-spline $B_n$ depends only on $n$, $D$, and does not depend on the matrix $M$. 
Thus, we have  
\begin{cor} If the digit set $D$ is fixed, then all B-splines of the same order are defined by the same refinement equation up to the change of the dilation matrix $M$. The same holds for the symmetrized B-splines. 
\end{cor}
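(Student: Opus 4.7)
The plan is to read off the statement directly from Proposition \ref{prop_mu}, observing that every ingredient in the formula for the refinement coefficients depends only on $D$ and $n$, not on the matrix $M$. More precisely, the proposition gives the refinement equation of $B_n^G$ in the form
$$\varphi(x) \;=\; m^{-n} \sum_{k \in \Z^d} C_k\, \varphi(Mx - k),$$
where $m = |D|$ and $C_k$ counts the number of ordered $(n+1)$-tuples $(s_1, \ldots, s_{n+1}) \in D^{n+1}$ with $s_1 + \cdots + s_{n+1} = k$. Since $m$ and $C_k$ are manifestly defined in terms of $D$ and $n$ alone, the coefficients $c_k = m^{-n} C_k$ of the refinement equation are the same for every expanding matrix $M$ compatible with the digit set $D$. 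Only the dilation matrix $M$ appearing in the argument $\varphi(Mx - k)$ changes when $M$ is varied, which is exactly the content of the corollary.

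For the symmetrized tile B-spline, I would carry out the same argument one level up, on the mask side. As observed right before the corollary, $\operatorname{Bs}_n^G$ satisfies a refinement equation whose mask equals $|a_0(\xi)|^{2(n+1)}$, where
$$a_0(\xi) \;=\; \frac{1}{m} \sum_{\Delta \in D} e^{-2\pi i (\Delta, \xi)}.$$
The polynomial $a_0$ depends only on $D$, hence so does $|a_0|^{2(n+1)}$; and the coefficients of the refinement equation are, up to the factor $m$, the Fourier coefficients of that polynomial. Thus the coefficients in the refinement equation for $\operatorname{Bs}_n^G$ again depend only on $D$ and $n$, and the matrix $M$ enters only through the dilation in the argument.

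There is no real obstacle here: the corollary is purely bookkeeping once Proposition \ref{prop_mu} is in hand, and the only point worth emphasizing is that the symmetrized case requires passing through the mask (rather than the explicit convolution formula), because $-D$ is not in general a translate of $D$ and so one cannot simply re-apply the proposition to $\chi_G * \chi_{-G}$ without this remark.
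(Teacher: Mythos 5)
Your argument is correct and follows essentially the same route as the paper, which derives the corollary from the observation that the mask $a_n = a_0^{n+1}$ (respectively $|a_0|^{2(n+1)}$ for the symmetrized splines) and hence the refinement coefficients of Proposition \ref{prop_mu} depend only on $D$ and $n$, with $M$ entering only through the dilation in the argument. Your added remark about handling the symmetrized case via the mask is a fair clarification but does not change the substance.
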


\begin{remark} \label{rem_constr_phi} The construction of the tile B-spline $B_n$ by definition requires the calculation of convolutions that uses the numerical integration. However, the function $B_n$ can be found in a different way, as a solution of the corresponding refinement equation. Every refinable function can be computed precisely on an arbitrarily dense lattice using the products of special transition matrices (we discuss them in more detail in  Section \ref{par_smooth}). 
In particular, the values of $B_n(k)$ at integer points $k \in \Z^d$ coincide with the components of the eigenvector $v$ of the transition matrix corresponding to the eigenvalue one. The values of the function $B_n(x)$ on the lattice $M^{-1}\Z^d$ can be obtained by the multiplication of the transition matrices by the vector $v$. The next multiplications by the transition matrices allow us to find $B_n(x)$ for $x \in M^{-2}\Z^d, M^{-3}\Z^d$, etc. Thus, after several iterations we get the precise values of the function $B_n(x)$ on the refined lattice. 
\end{remark}

\begin{propos} \label{prop1} The integer translates $\{B_n(x - k)\}_{k \in \Z^d}$ of the tile B-spline $B_n$ form a Riesz basis of their linear span. The same holds for the translates of the symmetrized tile B-splines. 
\end{propos}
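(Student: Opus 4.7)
The plan is to apply the Fourier-analytic Riesz basis criterion: the shifts $\{\varphi(\cdot-k)\}_{k\in\Z^d}$ form a Riesz basis of their closed linear span in $L^2(\R^d)$ if and only if there exist constants $0<A\le B<\infty$ with
$$A\ \le\ \Phi(\xi)\ :=\ \sum_{k\in\Z^d} |\widehat{\varphi}(\xi+k)|^2\ \le\ B\qquad \text{for a.e. } \xi.$$
Applied to $\varphi=B_n$, the identity $\widehat{B_n}(\xi)=\widehat{\chi_G}(\xi)^{n+1}$ reduces the problem to controlling $\Phi_n(\xi):=\sum_{k\in\Z^d}|\widehat{\chi_G}(\xi+k)|^{2(n+1)}$. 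Property 5 of tiles from Section \ref{par_tile} states that the integer translates of $\chi_G$ are orthonormal, which is equivalent to $\sum_k|\widehat{\chi_G}(\xi+k)|^2=1$ a.e.; this settles the case $n=0$. For $n\ge 1$ the upper bound is also immediate, since the trivial estimate $|\widehat{\chi_G}(\eta)|\le\mu(G)=1$ gives $|\widehat{\chi_G}(\xi+k)|^{2(n+1)}\le|\widehat{\chi_G}(\xi+k)|^2$, so that $\Phi_n\le 1$.

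The substantive step is the uniform lower bound, which I would deduce from continuity and pointwise positivity of $\Phi_n$ on the compact torus $\R^d/\Z^d$. The key tail estimate is
$$\sum_{|k|>N}|\widehat{\chi_G}(\xi+k)|^{2(n+1)}\ \le\ \Bigl(\sup_{|k|>N,\,\xi\in[0,1]^d}|\widehat{\chi_G}(\xi+k)|\Bigr)^{2n}\cdot\sum_{k\in\Z^d}|\widehat{\chi_G}(\xi+k)|^2.$$
Since $\chi_G\in L^1(\R^d)$, the Riemann--Lebesgue lemma forces the supremum on the right to tend to $0$ as $N\to\infty$; hence $\Phi_n$ is the uniform limit of its continuous partial sums on $[0,1]^d$ and therefore continuous on the torus. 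The same reasoning applied with exponent $2$ in place of $2(n+1)$ shows that $\xi\mapsto\sum_k|\widehat{\chi_G}(\xi+k)|^2$ is continuous, and since it equals $1$ almost everywhere, it equals $1$ identically.

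If $\Phi_n(\xi_0)=0$ for some $\xi_0$, then every summand $\widehat{\chi_G}(\xi_0+k)$ vanishes, contradicting the pointwise identity $\sum_k|\widehat{\chi_G}(\xi_0+k)|^2=1$ just established. Continuity on the compact torus then yields $\Phi_n\ge A>0$, completing the proof for $B_n$. The argument for the symmetrized B-spline $\Bs_n$ is identical, using $\widehat{\Bs_n^G}(\xi)=|\widehat{\chi_G}(\xi)|^{2(n+1)}$ and the same estimates with the exponent $4(n+1)$ replacing $2(n+1)$ throughout. The main technical obstacle is the continuity of $\Phi_n$: the Weierstrass $M$-test does not apply directly because the natural majorant series has no $\xi$-independent summable bound, so one must exploit the pointwise estimate $|\widehat{\chi_G}|\le 1$ together with Riemann--Lebesgue to make the tail uniformly small.
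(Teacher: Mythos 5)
Your overall strategy---verifying the two-sided bound $A\le\sum_{k\in\Z^d}|\widehat{B_n}(\xi+k)|^2\le B$ a.e.---is a legitimate route, and the upper bound via $|\widehat{\chi_G}|\le\mu(G)=1$ together with orthonormality of the tile shifts is fine, as is the case $n=0$. The gap is in the lower bound, exactly at the point you yourself call the main technical obstacle. Your tail estimate gains the factor $\bigl(\sup_{|k|>N}|\widehat{\chi_G}(\xi+k)|\bigr)^{2n}$ only because the exponent exceeds $2$; when you then assert that ``the same reasoning'' gives continuity of $\Phi_0(\xi)=\sum_k|\widehat{\chi_G}(\xi+k)|^2$, that factor degenerates to $(\sup)^0=1$ and Riemann--Lebesgue yields nothing: the terms $|\widehat{\chi_G}(\xi+k)|^2$ have no $\xi$-independent summable majorant in general (tiles can have fractal boundary, so $\widehat{\chi_G}$ may decay slowly), and orthonormality of the shifts only gives $\Phi_0=1$ \emph{almost everywhere}. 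This makes the argument circular: the tail estimate for $\Phi_n$ itself presupposes an everywhere, uniform-in-$\xi$ bound on $\Phi_0$, and the contradiction step needs the pointwise identity $\Phi_0(\xi_0)=1$ at the specific point $\xi_0$; neither has been established. With only the a.e.\ identity you cannot exclude a periodic zero of $\widehat{\chi_G}$, and ruling that out is precisely the substantive content of the proposition.

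The standard repair uses compact support rather than Riemann--Lebesgue: since $\chi_G*\chi_{-G}$ (and likewise $B_n*B_n(-\cdot)$) is compactly supported, the periodization $\Phi_0$ (resp.\ $\Phi_n$) coincides with a trigonometric polynomial whose coefficients are the autocorrelations $(\varphi,\varphi(\cdot+k))$, only finitely many of which are nonzero---this is the content of Theorem \ref{th_four}---and orthonormality of the tile shifts then gives $\Phi_0\equiv 1$, after which your positivity-plus-compactness argument goes through. This is also where your route differs from the paper's: the paper proves the proposition in Section \ref{par_subdiv} (see Proposition \ref{pr_stab}) by writing $\widehat{B_n}=(\widehat{B_0})^{n+1}$, observing that $\widehat{B_0}$ has no periodic zeros because the tile shifts are orthonormal, and invoking the criterion from \cite{CDM} that a compactly supported function is stable (its integer translates form a Riesz basis of their span) if and only if its Fourier transform has no periodic zeros; the symmetrized case follows in the same way from the mask $|a_0|^{2(n+1)}$. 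Your direct verification of the Riesz bounds can be completed along these lines, but as written the lower bound is not proved.
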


We postpone the proof to the Section \ref{par_subdiv}. 

The Proposition \ref{prop1} implies that the integer shifts of the tile B-spline $\varphi(x) = B_n(x)$ generate a multiresolution analysis (MRA), and correspondingly a wavelet system (see, for example, \cite{KPS}).  
Besides, we can apply the orthogonalization procedure to our refinable function $\varphi(x) = B_n(x)$ and obtain a new function $\varphi_1$ which generates the same MRA and possesses orthogonal integer shifts. 
This will be done in Section \ref{par_orthogonalization}. Thus, we will obtain the orthogonalized tile B-splines and further corresponding orthonormal wavelet systems. 

\begin{propos} \label{prop_poly} Linear combinations of integer shifts $\{B_n(x - k)\}_{k \in \Z^d}$ of the tile B-spline $B_n$ generate algebraic polynomials of degree at most $n$. The same holds for the shifts of the symmetrized tile B-splines. 
\end{propos}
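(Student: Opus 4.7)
The plan is to apply the multivariate Strang--Fix criterion: the integer translates of a compactly supported function $\varphi$ with $\widehat{\varphi}(0) \neq 0$ reproduce every algebraic polynomial of degree at most $n$ provided that $\widehat{\varphi}$ vanishes to order at least $n + 1$ at each nonzero point of $\Z^d$. The work thus reduces to checking the Fourier vanishing condition for $B_n$ and for $\Bs_n$ at nonzero integer points.

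First I would verify this condition for $\widehat{\chi_G}$. Property 4 of Section \ref{par_tile}, specialized to a tile (so $\mu(G) = 1$), yields the identity $\sum_{k \in \Z^d} \chi_G(x + k) \equiv 1$ almost everywhere. Computing the Fourier coefficients of this $\Z^d$-periodic function (equivalently, applying Poisson summation) gives $\widehat{\chi_G}(k) = \delta_{k, 0}$ for every $k \in \Z^d$. Because $B_n$ is the $(n + 1)$-fold convolution of $\chi_G$ with itself, one has $\widehat{B_n}(\xi) = \widehat{\chi_G}(\xi)^{n + 1}$, so $\widehat{B_n}$ vanishes to order at least $n + 1$ at every nonzero point of $\Z^d$ while $\widehat{B_n}(0) = 1$. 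Strang--Fix then gives polynomial reproduction of degree up to $n$ by the shifts $\{B_n(\cdot - k)\}_{k \in \Z^d}$.

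For the symmetrized version, since $\widehat{\chi_{-G}}(\xi) = \overline{\widehat{\chi_G}(\xi)}$, one has $\widehat{\Bs_n}(\xi) = |\widehat{\chi_G}(\xi)|^{2(n + 1)}$, which vanishes to order at least $2(n + 1) \ge n + 1$ at every nonzero integer point and equals $1$ at the origin; the identical Strang--Fix argument applies (in fact it yields the stronger bound of degree $2n + 1$, but the proposition asserts only $\leq n$).

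I do not foresee any real obstacle: the tiling identity converts directly into the Fourier vanishing, and the rest is Strang--Fix. The only mildly delicate point is that Strang--Fix must be invoked in the form valid for a compactly supported refinable function that is \emph{not} piecewise polynomial; however, the standard proof (constructing the reproducing coefficient sequences from the Taylor expansion of $1/\sum_{k} \widehat{B_n}(\xi + k)$ near $\xi = 0$) relies solely on the Fourier decay of $\widehat{\varphi}$ at nonzero integers, so it goes through verbatim for $B_n$ and $\Bs_n$.
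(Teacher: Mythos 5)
Your proof is correct and takes essentially the same route as the paper: both arguments rest on the Strang--Fix theorem together with the relation $\widehat{B}_n = \widehat{B}_0^{\,n+1}$, with the tiling (partition of unity) property of $\chi_G$ supplying the vanishing at nonzero integer points. The only difference is cosmetic: you obtain $\widehat{\chi_G}(k)=\delta_{k,0}$ directly from the Fourier coefficients of the periodization (Poisson summation), so you need only the ``if'' direction of Strang--Fix, whereas the paper first derives constant reproduction from the tiling identity and then uses the converse direction of Strang--Fix to conclude that $\widehat{B}_0$ vanishes at nonzero integers.
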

\begin{proof}
This holds for the order $n = 0$ since $B_0(x - k) = \chi_{G}(x - k)$ and the tile satisfies the property $\sum_{k \in \Z^d}\chi_G(x - k) \equiv 1$ almost everywhere. Thus, the linear combinations of the shifts $B_0(x - k)$ indeed generate identical constants. 

It is known that the shifts of a compactly supported function $\varphi$ generate algebraic polynomials of degree at most $n$ if and only if its Fourier transform $\varphi(\xi)$ has zeros of order at least $n + 1$ at all integer points except for zero (due to the Strang-Fix theorem, see \cite{SF, BVR, KPS}).  
Applying this statement first for $\varphi = B_0(x)$, we obtain that $\widehat{B}_0(\xi)$ has zeros of order one at all integer nonzero points. Since $\widehat{B}_n(\xi) = \widehat{B}_0^{n + 1}(\xi)$, it follows that the order of zeros of $\widehat{B}_n(\xi)$ is at least $n + 1$. Now we apply the converse statement for $\varphi = B_n$, this completes the proof. 
\end{proof}

\begin{cor} \label{cor_ord} The order of approximation by integer shifts of the tile B-spline $B_n$ is equal to $n$. 
\end{cor}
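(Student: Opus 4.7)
The plan is to deduce Corollary \ref{cor_ord} directly from Proposition \ref{prop_poly} by appealing to the Strang--Fix theorem, which couples polynomial reproduction with approximation order for compactly supported refinable generators. Proposition \ref{prop_poly} supplies the lower bound immediately: since the integer shifts of $B_n$ reproduce every algebraic polynomial of total degree at most $n$, a standard quasi-interpolation argument shows that, for sufficiently smooth target functions, the scaled shift-invariant spaces generated by $B_n$ attain the claimed order of approximation.

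What requires a little more care is the matching upper bound, i.e.\ showing that the order cannot exceed $n$. By the converse direction of Strang--Fix, an order strictly greater than $n$ would force $\widehat{B}_n$ to vanish to order $\ge n+2$ at every nonzero point of $\Z^d$. Using $\widehat{B}_n = \widehat{\chi}_G^{\,n+1}$ and the partition-of-unity property $\sum_{k\in\Z^d}\chi_G(x+k)\equiv 1$ from Section \ref{par_tile} (which by Poisson summation yields $\widehat{\chi}_G(k) = 0$ for every nonzero $k \in \Z^d$), this reduces to exhibiting a nonzero integer point $k_0$ at which $\widehat{\chi}_G$ has a simple zero, i.e.\ $\nabla\widehat{\chi}_G(k_0) \neq 0$.

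The main obstacle is therefore the nondegeneracy of $\nabla\widehat{\chi}_G$ at some nonzero lattice point. I would establish this by combining $\widehat{\chi}_G(\xi) = \int_G e^{-2\pi i(x,\xi)}\,dx$ with the refinement identity $\widehat{\chi}_G(M^T\xi) = a_0(\xi)\widehat{\chi}_G(\xi)$ derived in Section \ref{par_tile}. Assuming for contradiction that $\nabla\widehat{\chi}_G$ vanishes at every nonzero $k \in \Z^d$, differentiating the refinement identity and using $a_0(0)=1$ together with the expansiveness of $M$ propagates the degeneracy to arbitrarily high order along the orbit of $(M^T)^{-j}k$, which is incompatible with $\chi_G$ being a nontrivial $L^1$ function of unit integral. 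This completes the plan; the remaining steps are routine verifications of the Strang--Fix hypotheses in our setting (compact support and the Riesz basis property of integer shifts, the latter being Proposition \ref{prop1}).
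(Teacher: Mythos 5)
Your first paragraph is essentially the paper's own treatment: Corollary \ref{cor_ord} is deduced from Proposition \ref{prop_poly} via the standard Strang--Fix/quasi-interpolation theory (the paper cites \cite{BVR94}), and this yields the bound $O(a^{-(n+1)})$, which is all the paper actually asserts (sharpness is not addressed there). The extra material you add for the matching upper bound is where the gap is, and it is a genuine one: the ``propagation to arbitrarily high order'' step is not a proof. Differentiating the refinement identity and evaluating at a nonzero integer $k$ gives $M\,\nabla\widehat{\chi}_G(M^Tk)=\widehat{\chi}_G(k)\,\nabla a_0(k)+a_0(k)\,\nabla\widehat{\chi}_G(k)$, which under your assumption only restates that $\nabla\widehat{\chi}_G$ vanishes at the nonzero integer point $M^Tk$ --- already part of the hypothesis; nothing forces higher-order vanishing. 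Worse, no contradiction can follow from the ingredients you invoke (refinability with expanding $M$, $a_0(0)=1$, a nontrivial $L^1$ function of unit integral): the tile B-spline $B_1^G=\chi_G*\chi_G$ satisfies every one of these hypotheses, its Fourier transform $\widehat{\chi}_G^{\,2}$ vanishes to order at least two at every nonzero point of $\Z^d$, and yet it is a perfectly good nontrivial $L^1$ function of unit integral; likewise the univariate hat function has zeros of order exactly two at all nonzero integers, so the degeneracy neither propagates nor is incompatible with anything. Any correct argument must use that $\varphi=\chi_G$ is the indicator of a tile, which your proof never does.

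A short repair along your own lines: an $\ell_\infty$-combination $\sum_{k\in\Z^d}c_k\chi_G(x-k)$ is a.e.\ constant on each tile $G+k$, hence takes at most countably many values, so it cannot coincide a.e.\ with a non-constant linear polynomial (whose level sets are null). Thus the shifts of $\chi_G$ do not reproduce polynomials of degree $1$, and by the Strang--Fix equivalence already used in the proof of Proposition \ref{prop_poly} there exists $k_0\in\Z^d\setminus\{0\}$ and $|\alpha|\le 1$ with $D^\alpha\widehat{\chi}_G(k_0)\neq 0$; since $\widehat{\chi}_G(k_0)=0$, this means $\nabla\widehat{\chi}_G(k_0)\neq 0$. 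Then, choosing a direction $v$ with $D_v\widehat{\chi}_G(k_0)\neq 0$, one gets $D_v^{\,n+1}\widehat{B}_n(k_0)=(n+1)!\,\bigl(D_v\widehat{\chi}_G(k_0)\bigr)^{n+1}\neq 0$, so the Strang--Fix conditions of order $n+2$ fail and the approximation order cannot exceed $n$, which is the sharpness you were after.
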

This means that the distance between an arbitrary smooth function $f$ and the space generated by the functions $\{B_n(a x - k)\}_{k \in \Z^d}$ is equal to $O(a^{-(n + 1)})$ as $a \to +\infty$ (see, for example, \cite{BVR94}). 

\begin{remark}\label{zz}
The tile B-splines were also considered in the work \cite{Zakh2}, where they were called elliptic refinable functions. They were defined using the Fourier transform. Analogues of Propositions \ref{prop1}, \ref{prop_poly} and  Corollary \ref{cor_ord} were proved for them in the isotropic case (when $M$ is similar to the orthogonal matrix multiplied by a scalar). 

The case of plane tile B-splines was investigated in the work \cite{Zube} under the name of $\alpha$-splines. 
A certain complex number $\alpha \in \mathbb{Q}[i]$ played a role of the matrix $M$ in the refinement equation, $m = |\alpha|^2$. This allows us, in particular, to obtain some analogues of the tile B-splines with two digits. The same work studies subdivision schemes based on the $\alpha$-splines; and regularity issue was left as an open problem. 
\end{remark}

\begin{subsection}{The case of two digits ($m = 2$)}
In what follows we often consider the case when $m = |\det M| = 2$ and therefore $D(M) = \{0, e\}$. We call such tiles 2-tiles or two-digit tiles. They possess many useful properties, some of them are considered below. 

As it was established in~\cite[Proposition 2.2]{KL00}, every 2-tile is centrally-symmetric. We include the proof for the convenience of the reader. 
\begin{propos}\label{p.symm} 
Every 2-tile is centrally-symmetric. 
\end{propos}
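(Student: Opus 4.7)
The plan is to exhibit an explicit center of symmetry and verify it using the series representation of points of $G$. Since $m = 2$, the digit set is $D(M) = \{0, e\}$ with $e \in \Z^d \setminus M\Z^d$. I would set
$$c \;=\; \tfrac{1}{2}(M - I)^{-1} e,$$
noting that $(M-I)^{-1}$ exists because $M$ is expanding, so $1$ is not an eigenvalue of $M$. The claim is then that $G = 2c - G$, which is exactly central symmetry with respect to $c$.

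The key step is the following. Take any $x \in G$ and write $x = \sum_{k=1}^{\infty} M^{-k}\Delta_k$ with $\Delta_k \in \{0, e\}$. Define the ``flipped'' sequence $\Delta'_k = e - \Delta_k$, which also lies in $\{0,e\}$. Summing the geometric series of operators $\sum_{k=1}^{\infty} M^{-k} = (M-I)^{-1}$ (valid since $M$ is expanding), one obtains
$$\sum_{k=1}^{\infty} M^{-k}\Delta'_k \;=\; \Bigl(\sum_{k=1}^{\infty} M^{-k}\Bigr) e \;-\; x \;=\; (M-I)^{-1}e - x \;=\; 2c - x.$$
Hence $2c - x \in G$. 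This shows $2c - G \subseteq G$, and by symmetry of the argument $G \subseteq 2c - G$, so $G = 2c - G$.

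There is no real obstacle here; the only point that needs a word of care is that a given $x \in G$ may admit more than one digit expansion (as in the dyadic case on $[0,1]$). This causes no difficulty, because the argument only requires that we fix \emph{some} expansion of $x$ and then flip its digits to produce an expansion of $2c - x$. The convergence of the series $\sum M^{-k}\Delta_k$ for bounded $\Delta_k$ is guaranteed by expansivity of $M$, which also legitimizes the manipulation of $(M-I)^{-1}$ above. Thus central symmetry of the 2-tile reduces to the elementary observation that the involution $\Delta \mapsto e - \Delta$ is a bijection of the two-element digit set to itself.
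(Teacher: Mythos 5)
Your proof is correct and is essentially the paper's own argument: the paper writes each point as $c+\sum_j \pm\tfrac12 M^{-j}e$ with $c=\tfrac12\sum_{j\ge1}M^{-j}e$ (which equals your $\tfrac12(M-I)^{-1}e$) and flips the signs, which is exactly your digit involution $\Delta\mapsto e-\Delta$. The only cosmetic difference is that you sum the Neumann series to a closed form $(M-I)^{-1}e$, which the paper leaves as an infinite sum.
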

\begin{proof}
Let $D(M) = \{0 , e\}$ be the set of digits. 
Denote by  $c \, = \, \frac12 \, \sum\limits_{j=1}^{\infty} M^{-j} e$.  
Then the 2-tile $G \, = \left\{\, c \, + \, \sum \limits_{j=1}^{\infty} \pm \frac12 M^{-j} e\right\}$. 
The set of points $\left\{ \sum \limits_{j=1}^{\infty} \pm \frac12 M^{-j} e\right\}$ is symmetric about the origin, therefore $G$ is symmetric about the point~$c$. 
\end{proof}

In case of a centrally-symmetric tile, when $G = c + G_0 = c - G_0$, the following holds (here and in the sequel $\int$ means $\int_{\mathbb{R}^d}$)
\begin{multline*}
\chi_G * \chi_G(y) = \int \chi_{G}(x) \chi_{G}(y - x) dx = \int \chi_{G_0}(x - c) \chi_{G_0}(y - x - c) dx = \\  \int \chi_{G_0}(x) \chi_{G_0}(y - x - 2c) dx  = \chi_{G_0}*\chi_{G_0} (y - 2c);\end{multline*}
\begin{multline*}\chi_G * \chi_{-G}(y) = \int \chi_{G}(x) \chi_{-G}(y - x) dx = \int \chi_{G_0}(x - c) \chi_{G_0}(y - x + c) dx = \\ \int \chi_{G_0}(x) \chi_{G_0}(y - x) dx  = \chi_{G_0}*\chi_{G_0} (y).\end{multline*}
Therefore, for centrally-symmetric tiles, in particular, for all 2-tiles, the B-splines $B_{2n}$ and $\Bs_n$ differ only by a shift.  
Further we restrict ourselves only on the B-splines based on 2-tiles. 

Now we need the notion of isotropic tile:  
\begin{definition}
The tile is called {\em isotropic} if it is generated by an {\em isotropic matrix}~$M$, 
i.e., the diagonalizable matrix that has eigenvalues of equal moduli. 
\end{definition}
An isotropic matrix is affinely-similar to an orthogonal matrix multiplied by a scalar. The most popular tiles in applications are isotropic.

The 2-tiles have been studied in an extensive literature (see, for example, \cite{BG, Gel, G81, GJ, B10, B91, FG, LW95, Zai, Zakh}). It is known that for every $d$, there is a finite number of different 2-tiles in $\R^d$ up to affine similarity. For instance, there exist exacty three 2-tiles in $\R^2$. We call them the Square, the Dragon and the Bear (in the literature the terms respectively ``square'', ``twindragon'', ``tame twindragon'' are also used). All of them are isotropic. For their construction we can choose, for example, the matrices 
\begin{equation}\label{twotiles}
M_S = \begin{pmatrix} 0 & -2 \\ 1 & 0\end{pmatrix}, M_D = \begin{pmatrix} 1 & 1 \\ -1 & 1\end{pmatrix}, M_B = \begin{pmatrix} 1 & -2 \\ 1 & 0\end{pmatrix}
\end{equation}
 correspondingly, and the set of digits $D = \left\{\begin{pmatrix} 0 & 0\end{pmatrix}, \begin{pmatrix} 1 & 0\end{pmatrix}\right\}$. If we change the digits, the set transforms affinely. Further we shall use  these matrices and digits. 
The partition of 2-tiles to two affinely-similar parts is shown in Fig. \ref{pic1_planar}. The tiling of the plane with their integer shifts is in Fig. \ref{pic2_planar}. There are seven 2-tiles in $\R^3$, only one of them (the cube) is isotropic.

\begin{figure}[ht!]
\begin{minipage}[h]{0.3\linewidth}
\center{\includegraphics[width=1\linewidth]{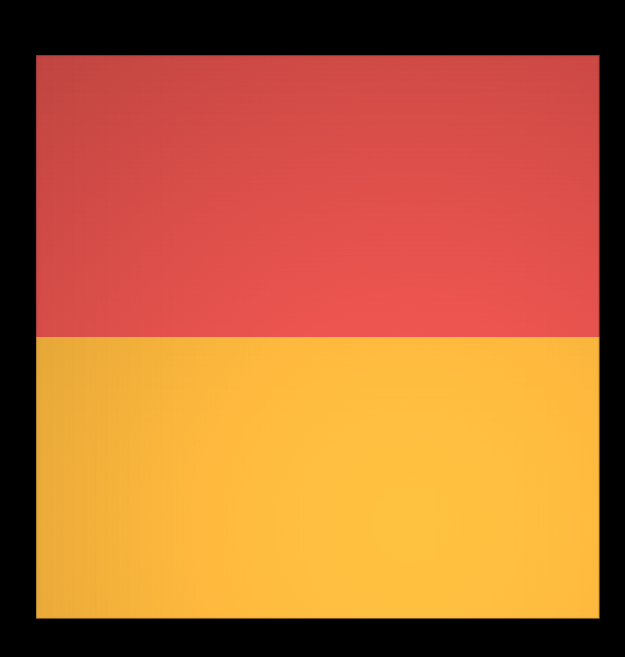}} Square \\
\end{minipage}
\hfill
\begin{minipage}[h]{0.3\linewidth}
\center{\includegraphics[width=1\linewidth]{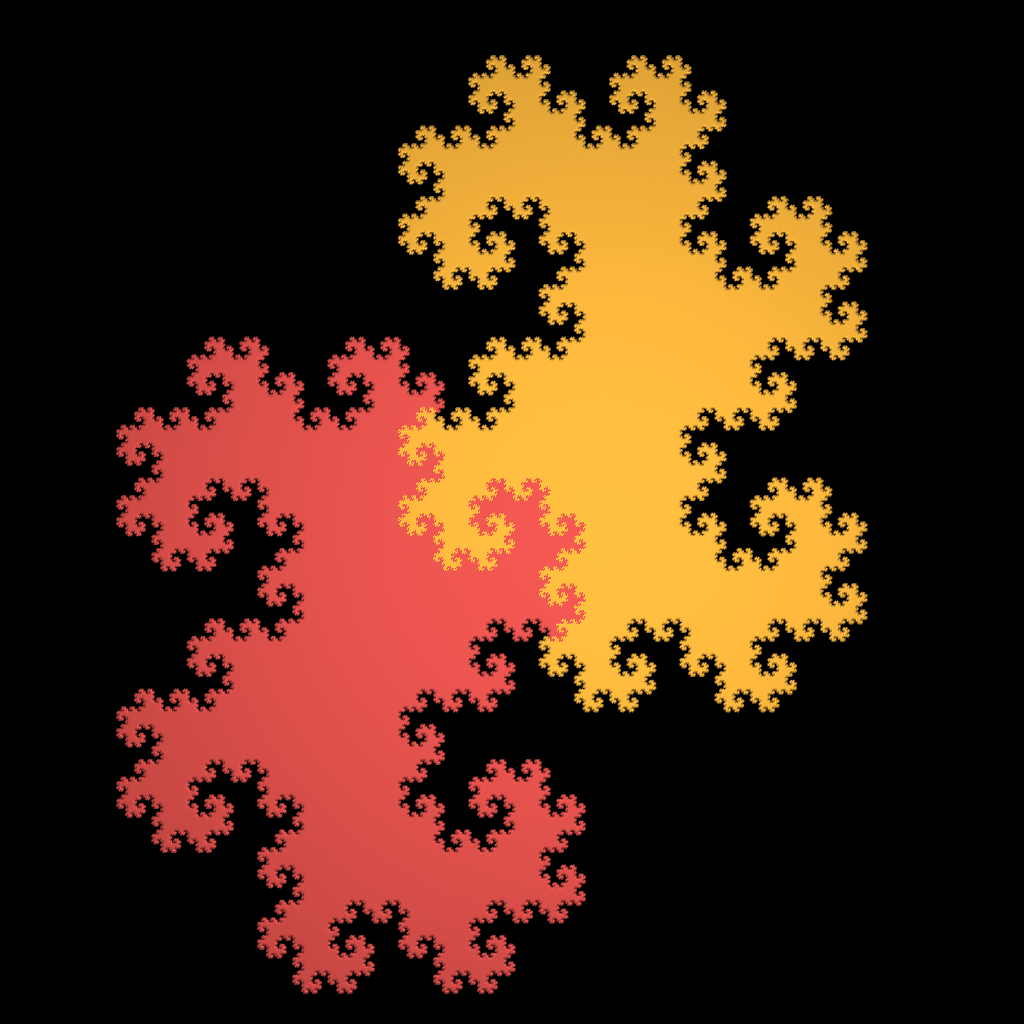}} Dragon \\
\end{minipage}
\hfill
\begin{minipage}[h]{0.3\linewidth}
\center{\includegraphics[width=1\linewidth]{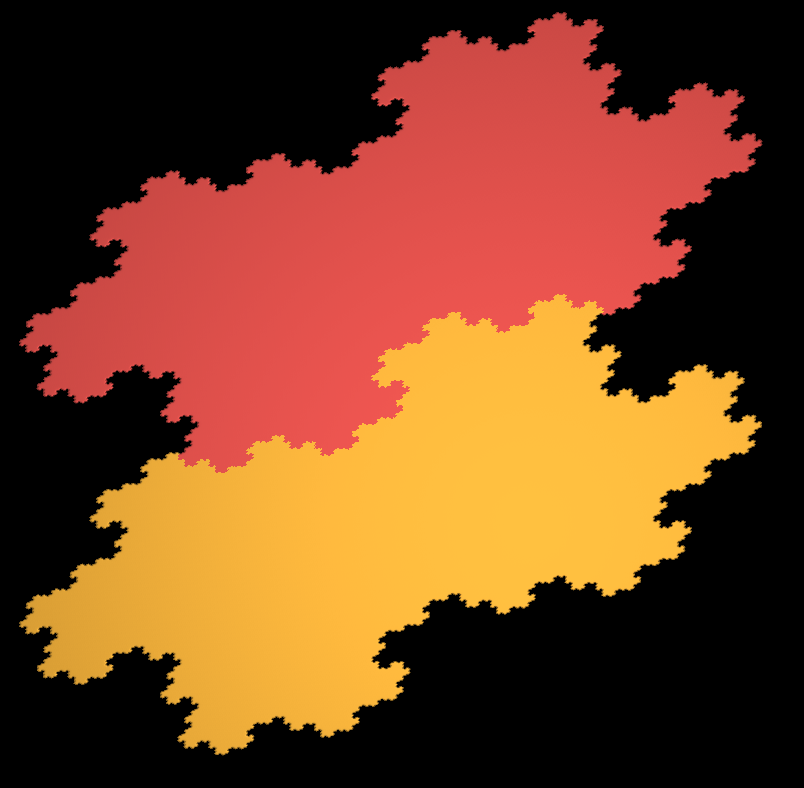}} Bear \\
\end{minipage}
\caption{The partition of the plane 2-tiles to two affinely-similar parts}
\label{pic1_planar}
\end{figure}

\begin{figure}[ht!]
\begin{minipage}[h]{0.3\linewidth}
\center{\includegraphics[width=1\linewidth]{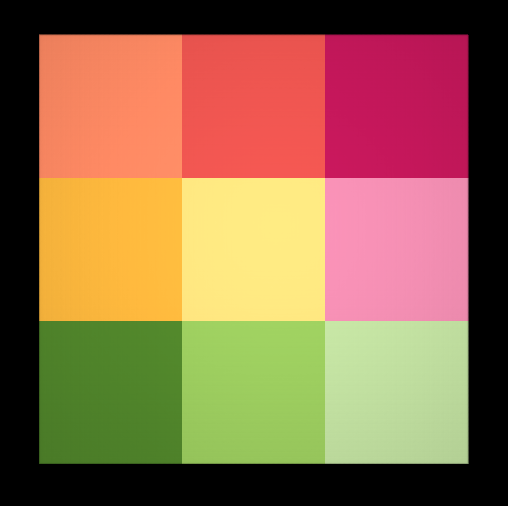}} Square \\
\end{minipage}
\hfill
\begin{minipage}[h]{0.3\linewidth}
\center{\includegraphics[width=1\linewidth]{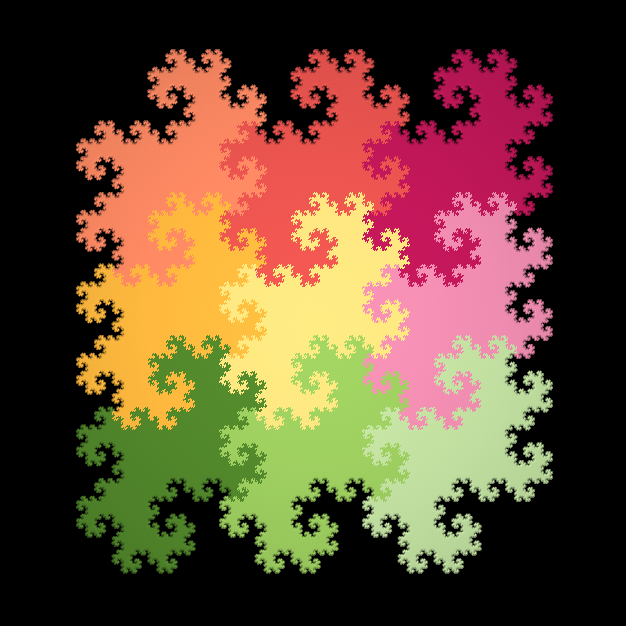}} Dragon \\
\end{minipage}
\hfill
\begin{minipage}[h]{0.3\linewidth}
\center{\includegraphics[width=1\linewidth]{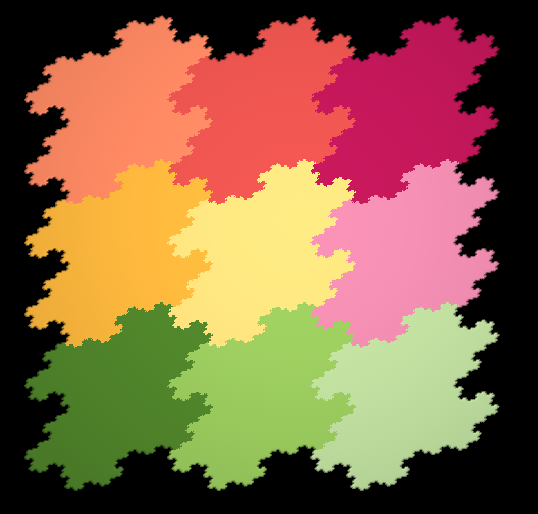}} Bear \\
\end{minipage}
\caption{The tiling of $\R^2$ by plane 2-tiles}
\label{pic2_planar}
\end{figure}

In the isotropic case the problem of classification of 2-tiles up to affine similarity can be solved completely \cite{PZ}. This classification turns out to be rather simple. In odd dimensions~$d=2k+1$ all isotropic 2-tiles are parallelepipeds. In even dimensions~$d=2k$ there exists three types of isotropic 2-tiles up to affine similarity. These are the parallelepiped, the direct product of $k$ (two-dimensional) Dragons and the direct product of $k$ (two-dimensional) Bears. 

In non-isotropic case finding the number $N(d)$ of non-equivalent classes of 2-tiles for every $d$ is reduced to the finding the total number of expanding monic polynomials with the constant coefficient $\pm 2$. See \cite{PZ}, where the following estimate was obtained  
$$\frac{d^2}{16}\,  - \, \frac{43d}{36}\, - \frac56 \, \le \, N(d) \, \le \, 2^{\, d\, \bigl(1+\frac{16 \ln \ln d}{\ln d}\bigr)}.$$ 

We call the tile B-splines constructed by 2-tiles according to their names but with the shift of the index by one. 
Thus, the indicator $B_0$ of the Bear tile is Bear-1, the convolution $B_{k - 1}$ of $k$ such functions is Bear-$k$. The shift of the index is due to the traditional terminology for the univariate B-splines, where the function $B_{k - 1}(x)$ is the spline of order $k - 1$, which is a convolution of $k$ functions $\chi_{[0, 1]}$. In our case the B-splines are defined not by polynomials but by convolutions of indicators. Therefore, it is more natural to use the number of multipliers in the name of the tile B-splines. 

The coefficients of the refinement equations of the tile B-splines were found in general case in Proposition \ref{prop_mu}. They are quite simple for plane tiles: 

\begin{cor} Let the tile B-spline $\varphi = B_n^G$ be constructed by the plane 2-tile with matrix $M$ and with digits $D = \left\{\begin{pmatrix} 0 & 0\end{pmatrix}, \begin{pmatrix} 1 & 0\end{pmatrix}\right\}$. Denote by $C_k = 2^{-n}{{{n+1}\choose k}}$ for $k = \{0, \ldots, n + 1\}$. 
Then the B-spline $B_n^G$ satisfies the refinement equation 
\begin{equation}\label{eq_two}
\varphi(x) = \sum \limits_{k \in \{0, 1, \ldots, n + 1\}}C_k \varphi \left(Mx - \begin{pmatrix} k \\ 0\end{pmatrix}\right).
\end{equation}
\end{cor}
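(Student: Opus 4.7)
The plan is to invoke Proposition \ref{prop_mu} directly and carry out the combinatorics for the particular two-element digit set. By Proposition \ref{prop_mu}, the refinement equation of $B_n^G$ is
$$\varphi(x) = m^{-n}\sum_{k\in\Z^d} C_k \, \varphi(Mx - k),$$
where $C_k$ counts ordered $(n+1)$-tuples $(s_1,\ldots,s_{n+1})\in D^{\,n+1}$ summing to $k$. Here $m=|\det M|=2$, so the prefactor is $2^{-n}$, matching the claimed $C_k$.

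Next I would enumerate the tuples. Since $D=\{(0,0)^T,(1,0)^T\}$, every element of $D$ has second coordinate zero, so any sum $s_1+\cdots+s_{n+1}$ has the form $(j,0)^T$ for some $j\in\{0,1,\ldots,n+1\}$, namely $j$ equals the number of indices $i$ with $s_i=(1,0)^T$. Consequently $C_k=0$ unless $k=(j,0)^T$ with $0\le j\le n+1$, and in that case the number of ordered tuples with exactly $j$ entries equal to $(1,0)^T$ is the binomial coefficient $\binom{n+1}{j}$. Substituting into the formula of Proposition \ref{prop_mu} yields
$$\varphi(x) \;=\; 2^{-n}\sum_{k=0}^{n+1} \binom{n+1}{k}\,\varphi\!\left(Mx - \begin{pmatrix} k \\ 0 \end{pmatrix}\right),$$
which is exactly \eqref{eq_two}.

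There is no real obstacle here; the statement is a direct specialization of Proposition \ref{prop_mu} to the concrete digit set, and the only ``work'' is the observation that sums of copies of $(0,0)^T$ and $(1,0)^T$ always lie on the first coordinate axis and are counted by binomial coefficients. The main thing to be careful about is to use ordered tuples (with repetition) in the counting, which produces $\binom{n+1}{j}$ rather than, say, $\binom{n+1}{j}^{-1}$ or a multinomial expression; for a two-element digit set the multinomial coefficients appearing in the general remark after Proposition \ref{prop_mu} reduce to ordinary binomials, so no separate argument is needed.
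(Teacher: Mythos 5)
Your proposal is correct and follows the paper's intended route: the corollary is exactly the specialization of Proposition \ref{prop_mu} to $m=2$ and $D=\{(0,0)^T,(1,0)^T\}$, where the only content is that sums of the digits lie on the first axis and the ordered-tuple counts reduce to the binomial coefficients $\binom{n+1}{j}$, absorbed with the prefactor $2^{-n}$ into the $C_k$ of the statement. The paper itself gives no separate proof, so your explicit counting is a faithful filling-in of the same argument.
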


Thus, all tile B-splines based on Bear, Dragon and Square tiles may be computed using refinement equation \eqref{eq_two}, see Remark \ref{rem_constr_phi}. The Fig. \ref{spl_Bear} shows\footnote{The source code for the programs by which all the computations were done is available on github \cite{gitTZ}.}  Bear-1, ..., Bear-4, the Fig. \ref{spl_Dragon} shows Dragon-1, ..., Dragon-4, the Fig. \ref{spl_Rect} shows Square-1, ..., Square-4. 

In Section \ref{par_smooth} we compute the H{\" o}lder regularity of these splines and establish that the Bear-2 is not from $C^1$, the Bear-3 is from $C^2$, and the Bear-4 is from $C^3$. Thus, the Bear-3 and Bear-4 have higher regularity than the Square-3 and Square-4 respectively even if it seems paradoxical (see Theorem \ref{th_smooth}).

\begin{figure}[ht]
\begin{center}
\begin{minipage}[h]{0.48\linewidth}
\center{\includegraphics[width=1\linewidth]{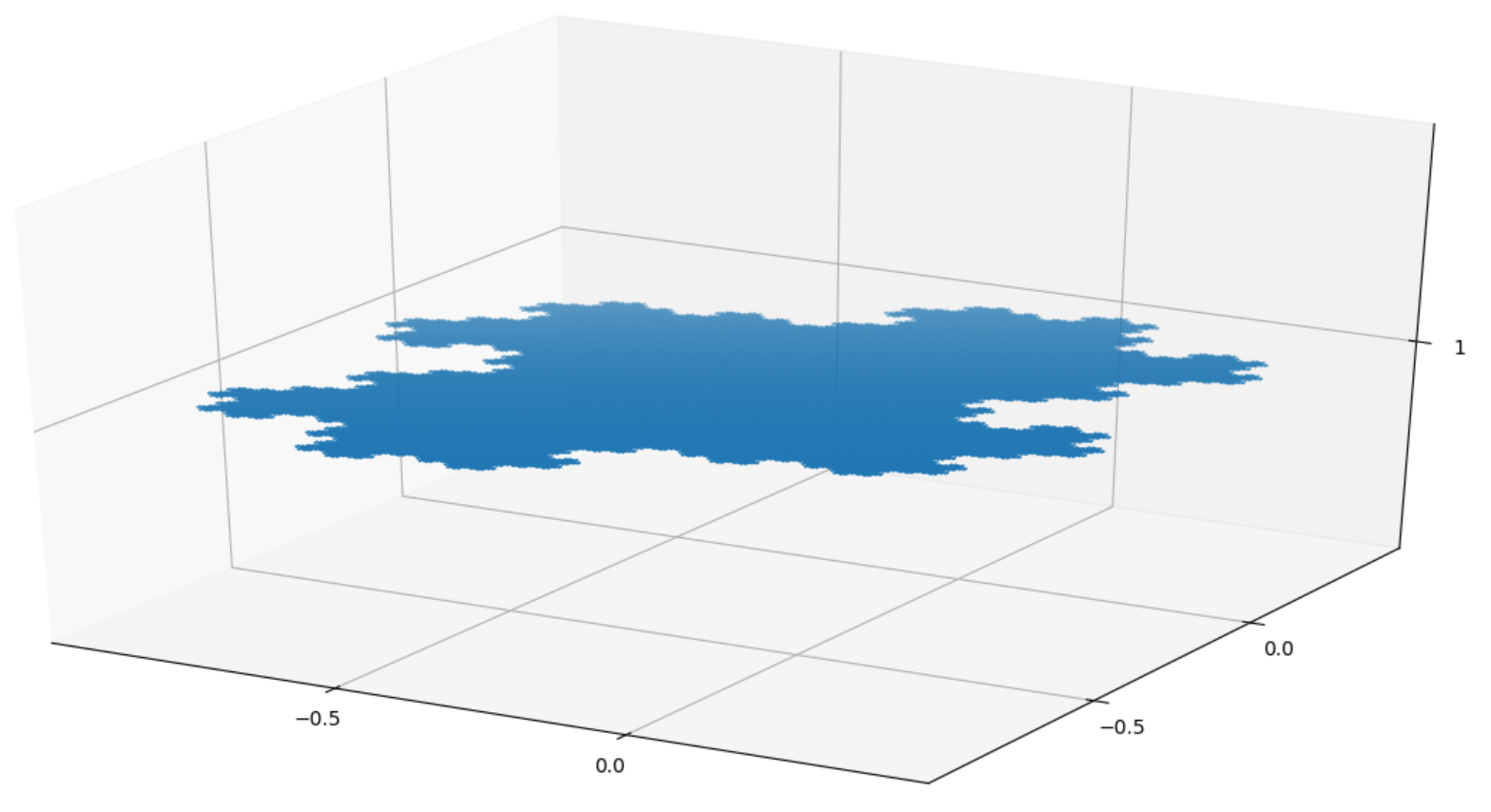}} \\ Bear-1.
\end{minipage}
\hfill
\begin{minipage}[h]{0.48\linewidth}
\center{\includegraphics[width=1\linewidth]{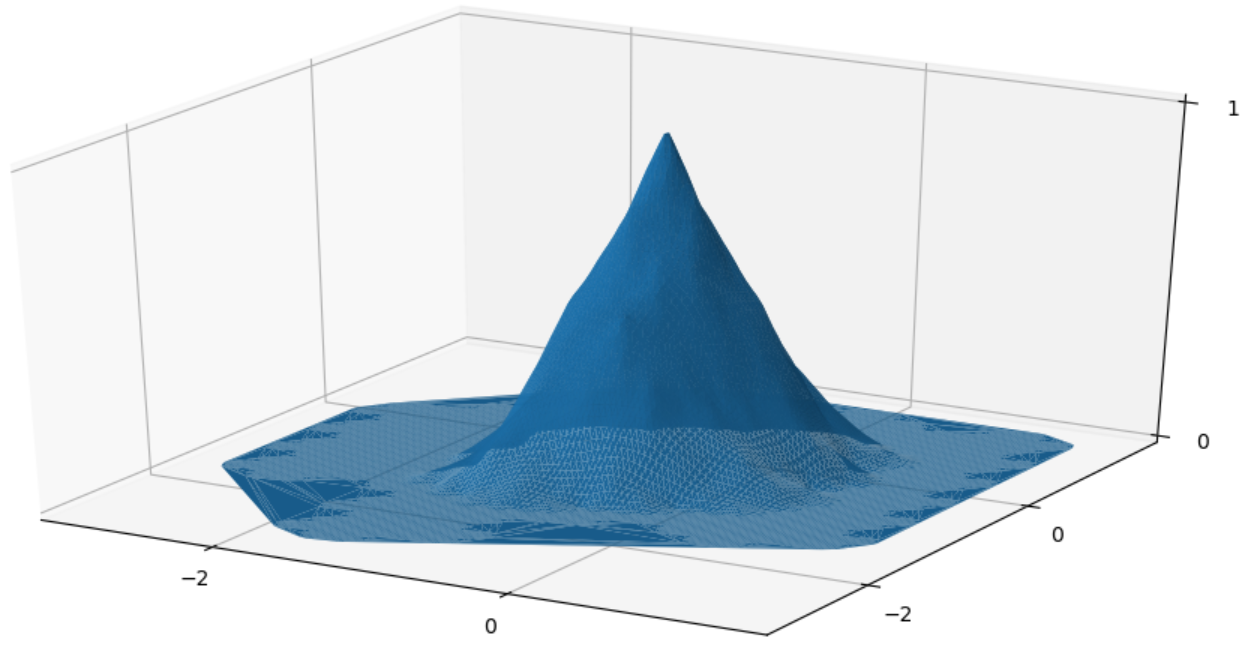}} \\ Bear-2.
\end{minipage}
\vspace{\baselineskip}
\begin{minipage}[h]{0.48\linewidth}
\center{\includegraphics[width=1\linewidth]{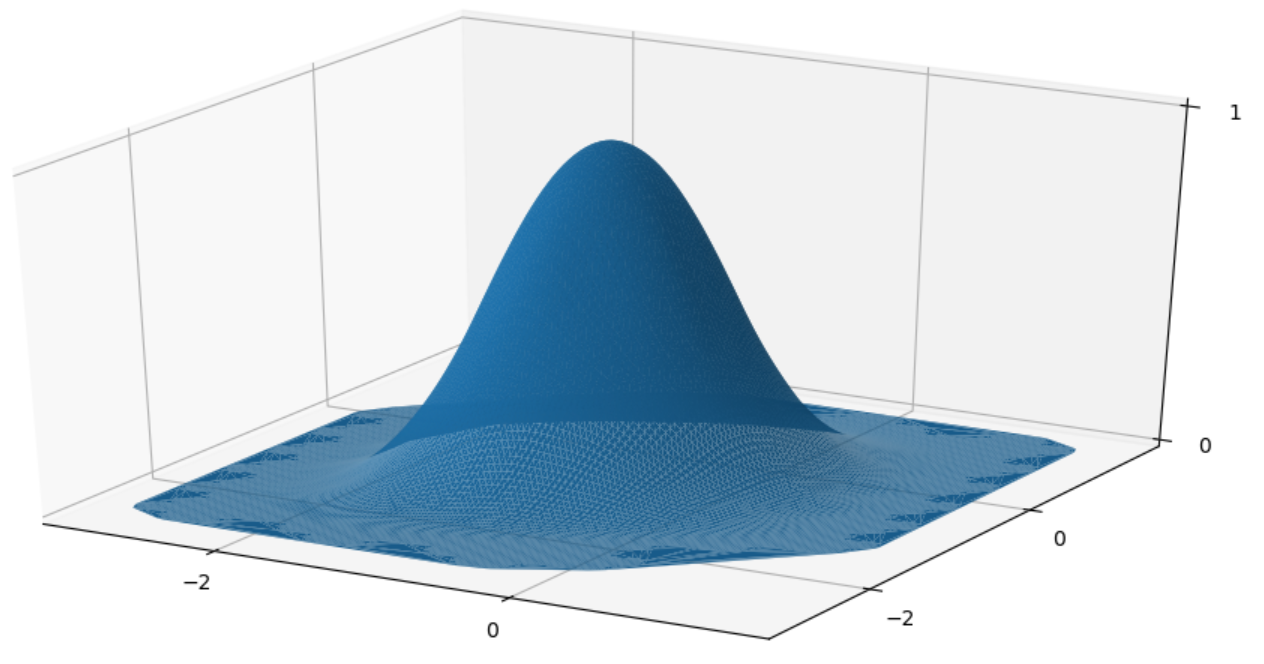}} \\ Bear-3.
\end{minipage}
\hfill
\begin{minipage}[h]{0.48\linewidth}
\center{\includegraphics[width=1\linewidth]{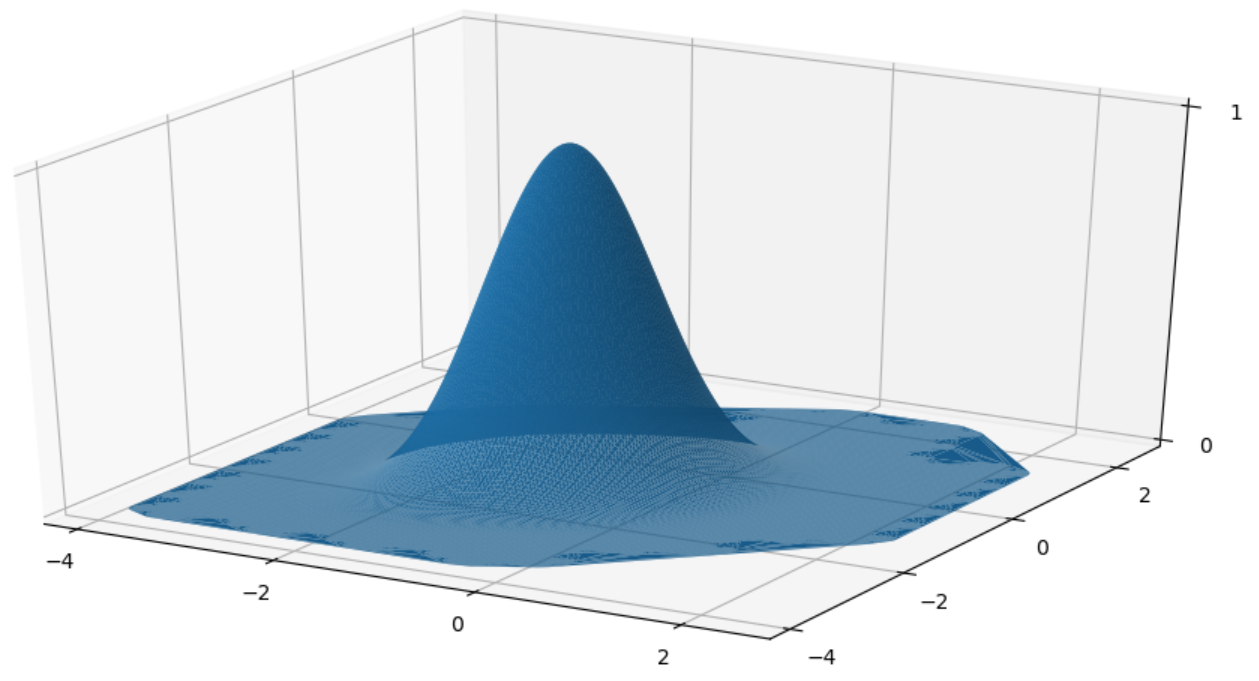}} \\ Bear-4. 
\end{minipage}
\caption{Tile B-splines with Bear matrix.}
\label{spl_Bear}
\end{center}
\end{figure}

\begin{figure}[ht]
\begin{center}
\begin{minipage}[h]{0.48\linewidth}
\center{\includegraphics[width=1\linewidth]{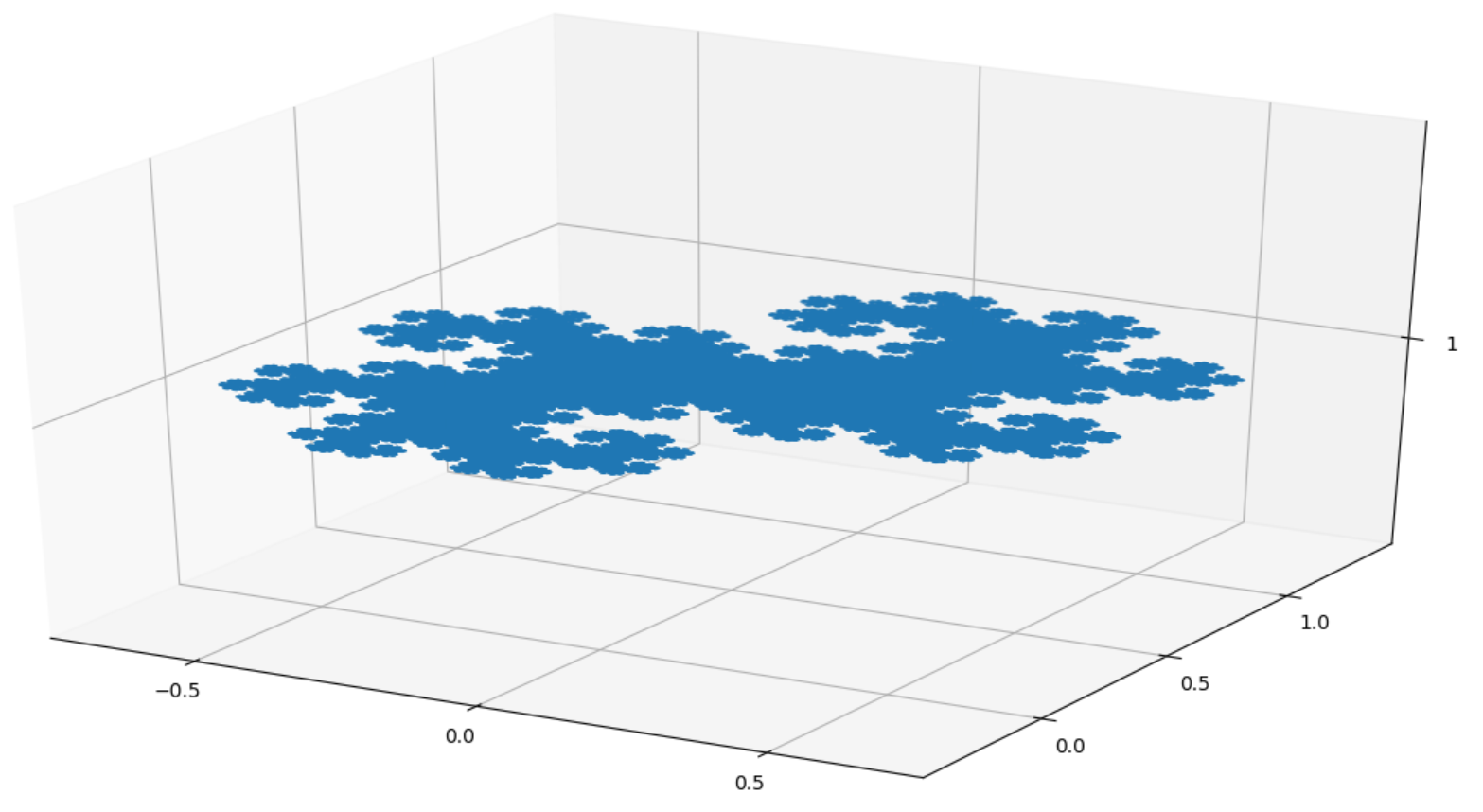}} \\ Dragon-1.
\end{minipage}
\hfill
\begin{minipage}[h]{0.48\linewidth}
\center{\includegraphics[width=1\linewidth]{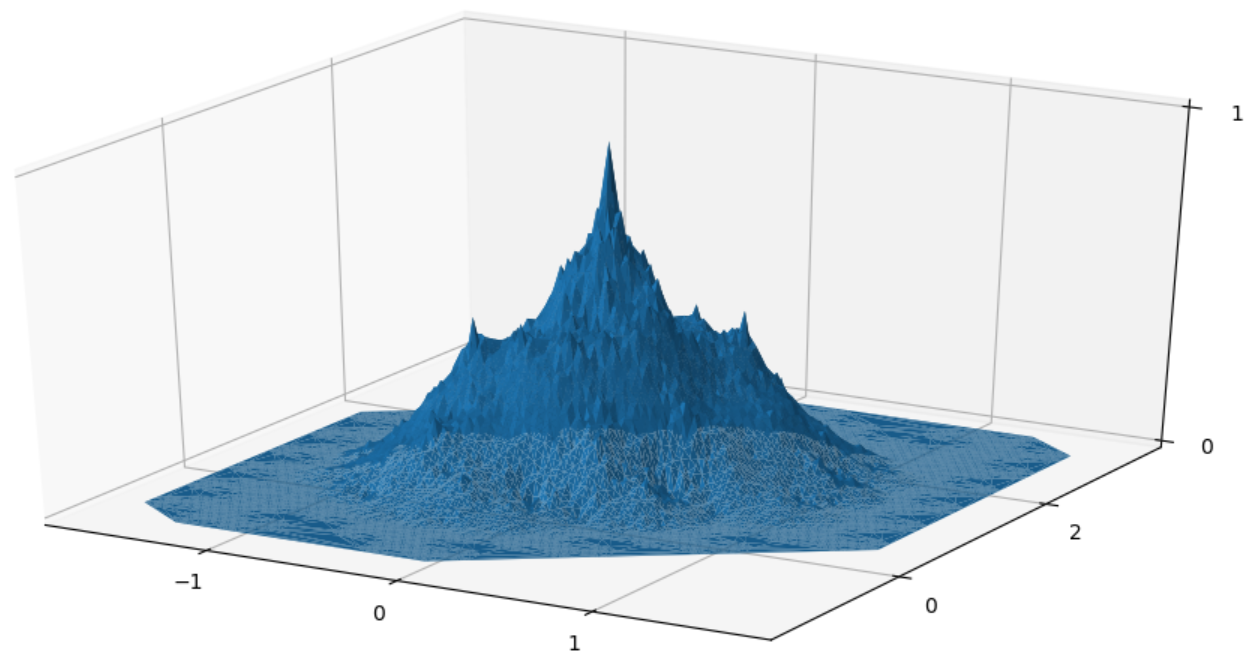}} \\ Dragon-2.
\end{minipage}
\vspace{\baselineskip}
\begin{minipage}[h]{0.48\linewidth}
\center{\includegraphics[width=1\linewidth]{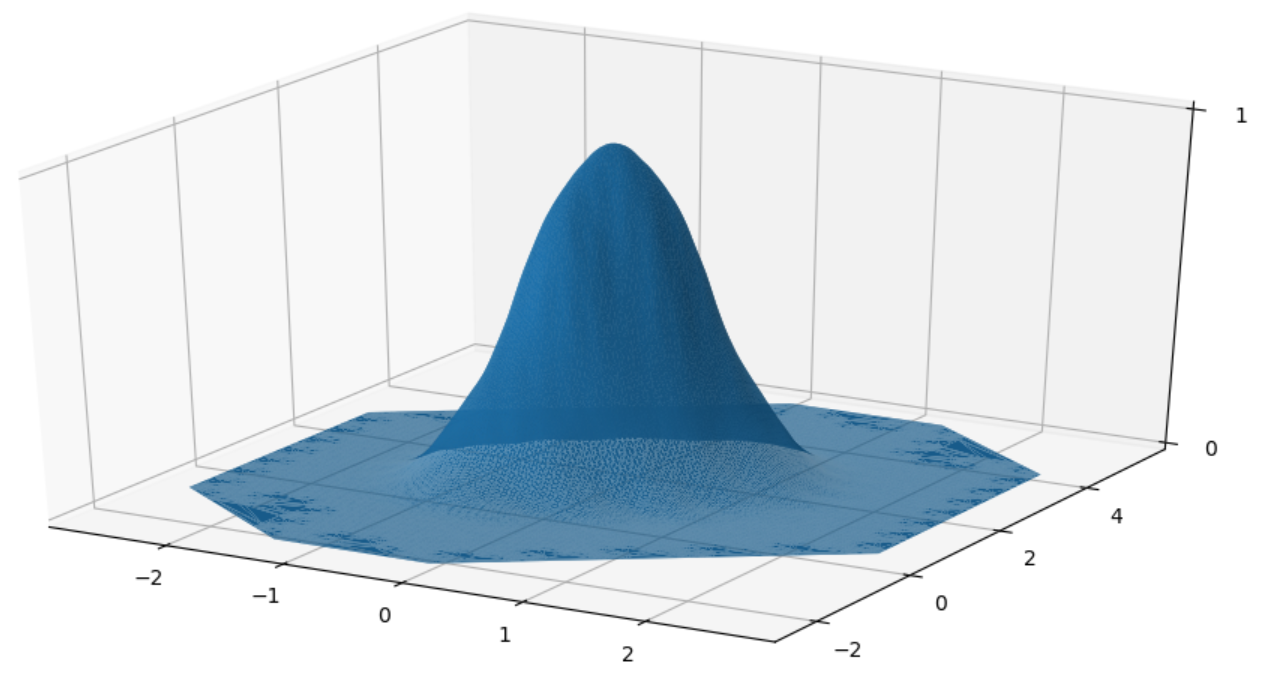}} \\ Dragon-3.
\end{minipage}
\hfill
\begin{minipage}[h]{0.48\linewidth}
\center{\includegraphics[width=1\linewidth]{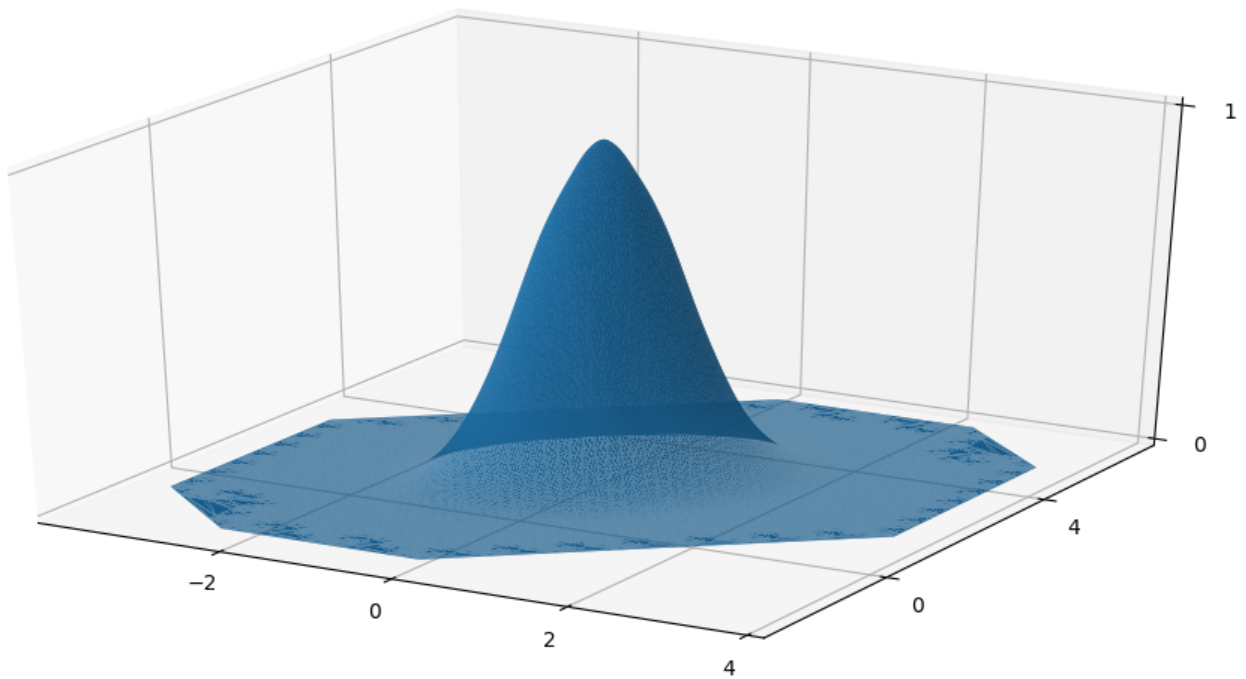}} \\ Dragon-4. 
\end{minipage}
\caption{Tile B-splines with Dragon matrix.}
\label{spl_Dragon}
\end{center}
\end{figure}

\begin{figure}[ht]
\begin{center}
\begin{minipage}[h]{0.48\linewidth}
\center{\includegraphics[width=1\linewidth]{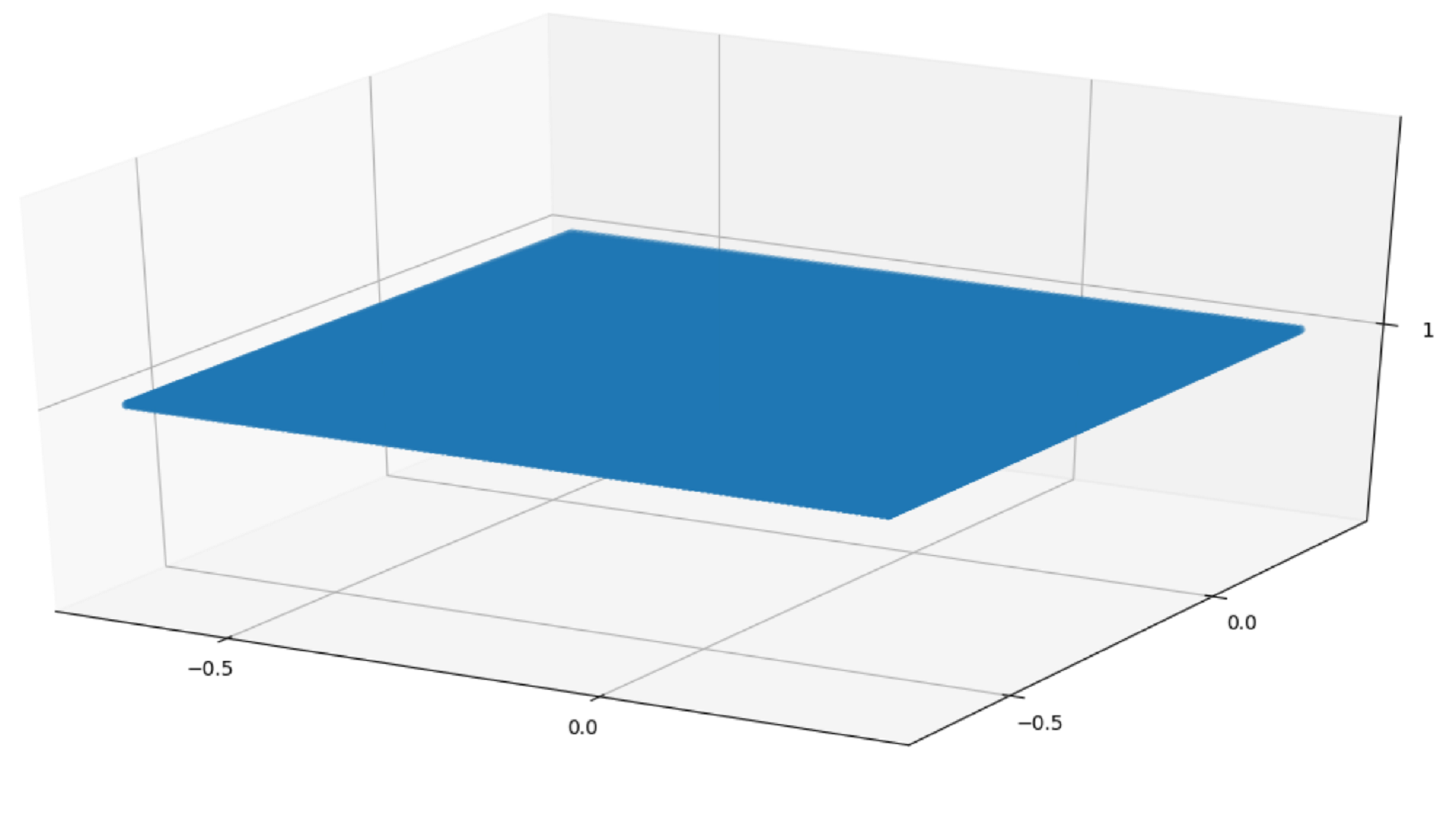}} \\ Square-1.
\end{minipage}
\hfill
\begin{minipage}[h]{0.48\linewidth}
\center{\includegraphics[width=1\linewidth]{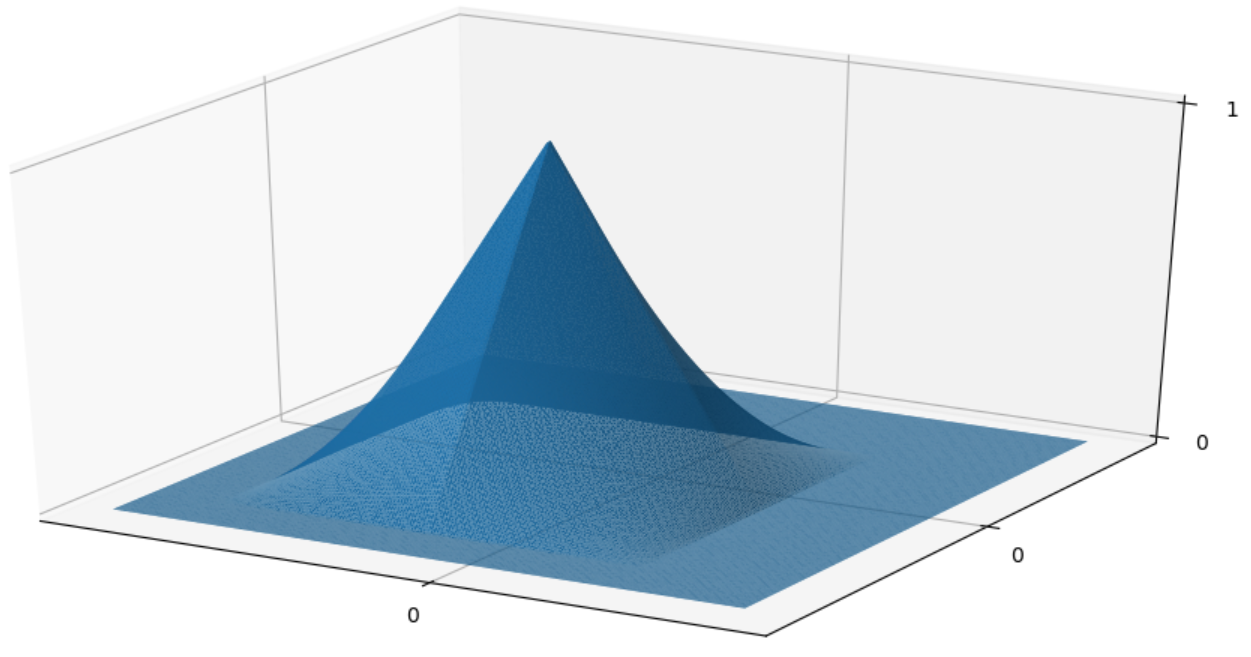}} \\ Square-2.
\end{minipage}
\vspace{\baselineskip}
\begin{minipage}[h]{0.48\linewidth}
\center{\includegraphics[width=1\linewidth]{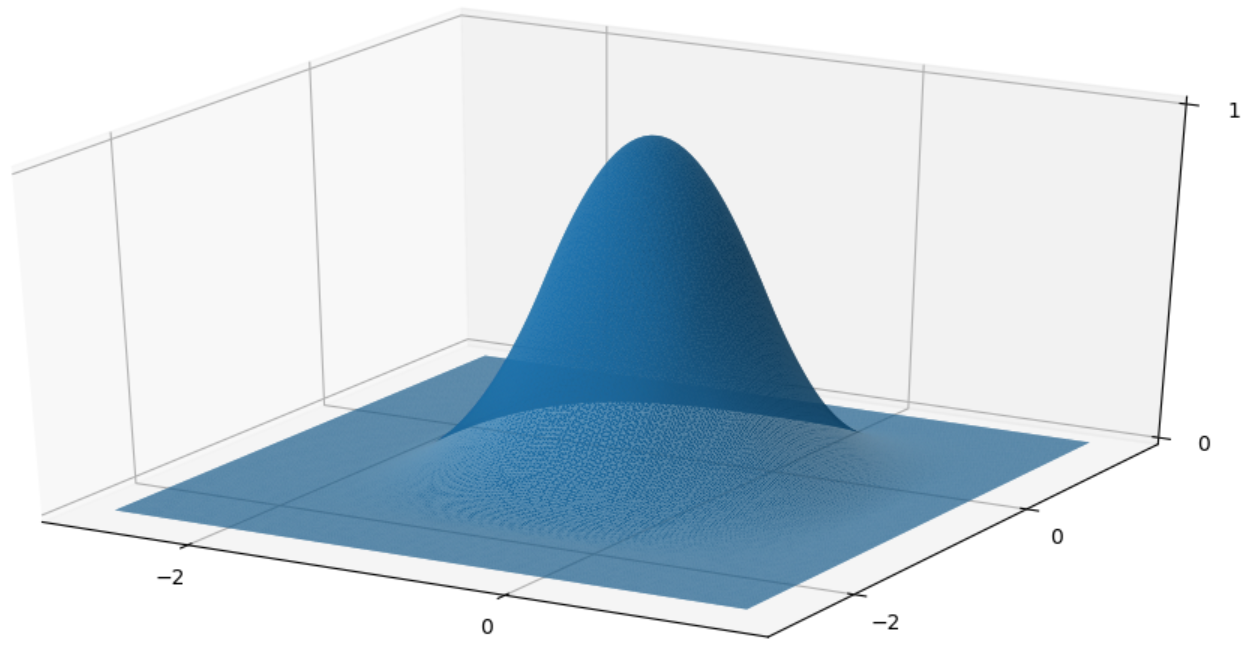}} \\ Square-3.
\end{minipage}
\hfill
\begin{minipage}[h]{0.48\linewidth}
\center{\includegraphics[width=1\linewidth]{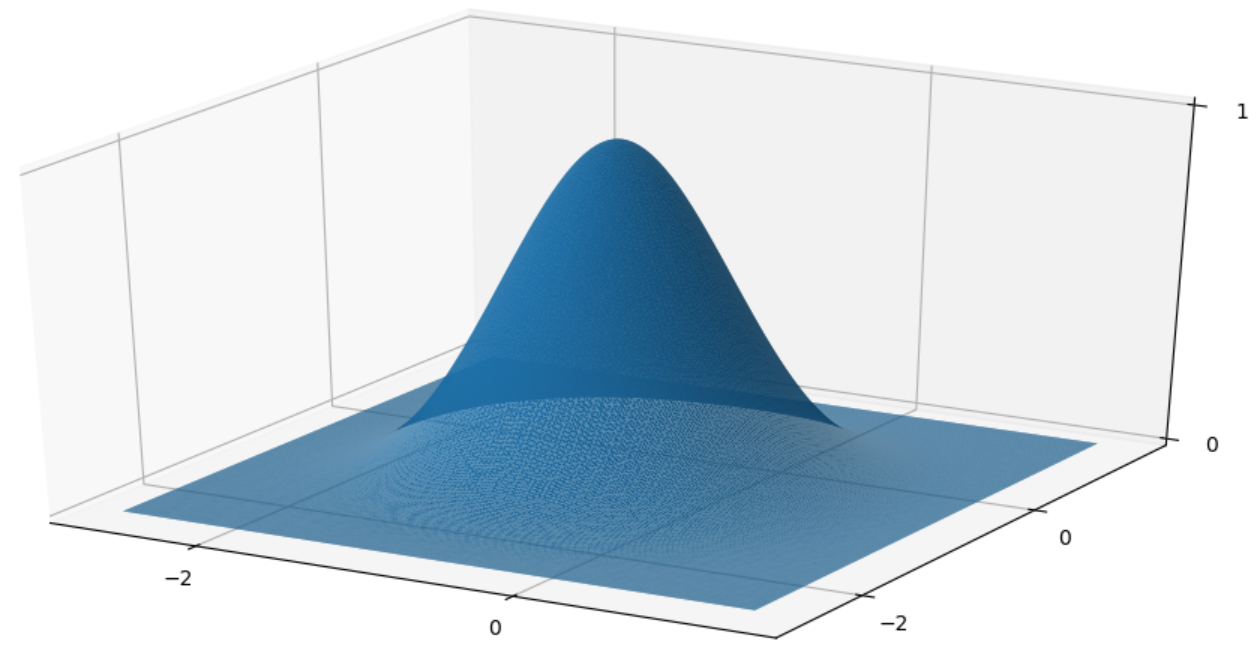}} \\ Square-4. 
\end{minipage}
\caption{Tile B-splines with Square matrix.}
\label{spl_Rect}
\end{center}
\end{figure}
\end{subsection}
\end{section}

\begin{section}{Orthogonalization of B-splines} \label{par_orthogonalization}
In the previous section we established that the B-spline $\varphi(x) = B_n(x)$ is a refinable function with compact support. Its integer shifts are not orthogonal to each other, therefore it does not generate an orthonormal wavelet system. Since these shifts form the Riesz basis of their linear span, there exists a standard way to orthogonalize them. Namely, we can construct another refinable function $\varphi_1(x)$ whose integer shifts are the orthonormal basis in the space of integer shifts of $\varphi(x)$ (in terms of wavelet theory, it should generate the same multiresolution analysis). The support of the function $\varphi_1$ is not finite anymore, but it decays fast at infinity (see the example of estimate in Section \ref{par_finite}).  The construction uses the following well-known fact (see, for example, \cite{KPS}). 

\begin{propos} The function $\eta(x) \in L_2$ has orthonormal integer shifts if and only if $\sum_{k \in \Z^d} |\widehat{\eta}(\xi + k)|^2 \equiv 1$.
\end{propos}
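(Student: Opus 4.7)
The plan is to use Plancherel's theorem to rewrite the inner products of integer shifts as Fourier coefficients of a $\Z^d$-periodic function, and then invoke uniqueness of Fourier series to identify the orthonormality condition with the stated pointwise identity.

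First I would compute $\langle \eta(\cdot), \eta(\cdot - k) \rangle$ for every $k \in \Z^d$ via Parseval's formula. Since the Fourier transform of $\eta(\cdot - k)$ is $e^{-2\pi i (k, \xi)}\widehat{\eta}(\xi)$, Plancherel gives
$$\langle \eta(\cdot), \eta(\cdot - k) \rangle \; = \; \int_{\R^d} |\widehat{\eta}(\xi)|^2 \, e^{2 \pi i (k, \xi)}\, d\xi.$$
Orthonormality of the integer shifts is then equivalent to this integral being $\delta_{k, 0}$ for all $k \in \Z^d$.

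Second, I would partition $\R^d$ into the translates $[0,1]^d + m$ ($m \in \Z^d$), substitute $\xi \mapsto \xi + m$ in each piece, and use $e^{2\pi i (k, m)} = 1$ for $k, m \in \Z^d$. Tonelli's theorem (applied to the nonnegative integrand $|\widehat{\eta}(\xi+m)|^2$) allows us to swap summation and integration, yielding
$$\langle \eta(\cdot), \eta(\cdot - k) \rangle \; = \; \int_{[0,1]^d} F(\xi)\, e^{2 \pi i (k, \xi)} \, d\xi, \qquad F(\xi) := \sum_{m \in \Z^d} |\widehat{\eta}(\xi + m)|^2.$$
Since $\widehat{\eta} \in L_2(\R^d)$ by Plancherel, the function $F$ lies in $L_1([0,1]^d)$, so its Fourier coefficients are well defined.

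Third, the right-hand side is precisely the $k$-th Fourier coefficient of the $\Z^d$-periodic function $F$ on the fundamental cube $[0,1]^d$. Therefore the orthonormality of the shifts is equivalent to all Fourier coefficients of $F$ being equal to $\delta_{k,0}$. By uniqueness of Fourier expansion in $L_1([0,1]^d)$, this is equivalent to $F \equiv 1$ almost everywhere, which is exactly the stated condition.

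The only technical point is justifying the interchange of sum and integral; this is handled by Tonelli, since the summand is nonnegative and its $L_1$-norm on $[0,1]^d$ equals $\|\widehat{\eta}\|_{L_2(\R^d)}^2 = \|\eta\|_{L_2(\R^d)}^2 < \infty$. Everything else is straightforward bookkeeping.
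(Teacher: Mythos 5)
Your proof is correct; it is the standard periodization argument (Plancherel, then folding $\R^d$ onto the torus and identifying the inner products $\langle\eta,\eta(\cdot-k)\rangle$ with the Fourier coefficients of $F(\xi)=\sum_{m}|\widehat{\eta}(\xi+m)|^2$, and concluding by uniqueness of Fourier coefficients in $L_1$ of the cube). The paper itself gives no proof of this proposition, citing it as a well-known fact from the wavelet literature, and your argument is precisely the one found there; the only cosmetic remark is that Tonelli gives finiteness of $\sum_m\int_{[0,1]^d}|\widehat{\eta}(\xi+m)|^2\,d\xi$, after which the interchange for the integrand carrying the factor $e^{2\pi i(k,\xi)}$ is licensed by Fubini.
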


In particular, the function $\varphi_1$, given by formula in terms of Fourier transform 
\begin{equation}\label{varphi1}
\widehat \varphi_1(\xi) = \frac{\widehat \varphi(\xi)}{\sqrt{\sum \limits_{k \in \Z^d} |\widehat{\varphi}(\xi + k)|^2}},
\end{equation}
possesses this property. The transition from the function $\varphi(x)$ to the function $\varphi_1(x)$ by the formula \eqref{varphi1} is the standard Battle-Lemarie orthogonalization procedure. 

From formula \eqref{varphi1} it follows that the function $\varphi_1$ is expressed as a linear combination of integer shifts $\{\varphi(x - k)\}_{k \in \Z^d}$ of function $\varphi$, below we find the coefficients of decomposition. 

\begin{theorem}\label{th_ort}
Let $G$ be an arbitrary tile in $\R^d$, $\varphi(x) = B_n^G(x)$ be its corresponding tile B-spline, $\varphi_1(x)$ be its orthogonalization. Let $\Phi_k = (\varphi, \varphi(\cdot + k))$ for all $k \in \Z^d$ and $\Phi(\xi) = \sum_{k \in \Z^d} \Phi_k e^{-2 \pi i (k, \xi)}$.  
Then $\varphi_1(x)$ is a linear combination of integer shifts of $\varphi(x)$:  
$$\varphi_1(x) = \sum _{k \in \Z^d}{b_k \varphi(x - k)},$$
where $b_k$ are the Fourier coefficients of the function $\frac{1}{\sqrt{\Phi(\xi)}} = \sum b_k e^{-2 \pi i (k, \xi)}$. 
\end{theorem}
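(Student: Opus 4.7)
The plan is to translate the defining formula \eqref{varphi1} from the Fourier side back into a linear combination of shifts of $\varphi$. The first and crucial step is to identify the denominator with $\sqrt{\Phi(\xi)}$, i.e.\ to prove the periodization identity
\[
\sum_{k \in \Z^d} |\widehat\varphi(\xi + k)|^2 \; = \; \Phi(\xi).
\]
To do this I would regard the left-hand side as a $\Z^d$-periodic function and compute its Fourier coefficients by unfolding: the $k$-th coefficient equals
\[
\int_{[0,1]^d}\sum_{m \in \Z^d}|\widehat\varphi(\xi+m)|^2\, e^{2\pi i(k,\xi)}d\xi \; = \; \int_{\R^d} |\widehat\varphi(\eta)|^2 e^{2\pi i (k,\eta)}d\eta,
\]
and by Parseval applied to $\widehat\varphi$ and $\widehat{\varphi(\cdot-k)}(\eta) = e^{-2\pi i(k,\eta)}\widehat\varphi(\eta)$ this integral equals $\int \varphi(x)\varphi(x-k)dx = \Phi_{-k}$, which agrees with $\Phi_k$ by a change of variable. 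So the two $\Z^d$-periodic functions have the same Fourier series and therefore coincide.

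The second step is to expand $1/\sqrt{\Phi(\xi)}$ as an absolutely convergent Fourier series $\sum_k b_k e^{-2\pi i(k,\xi)}$. For this I need $\Phi(\xi) > 0$ everywhere, which is precisely the Riesz-basis condition from Proposition \ref{prop1}. Since $\varphi = B_n^G$ is compactly supported, only finitely many $\Phi_k$ are nonzero and $\Phi$ is a strictly positive trigonometric polynomial, so $1/\sqrt{\Phi}$ is a $C^\infty$ periodic function whose Fourier coefficients $b_k$ decay faster than any polynomial.

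The third step is a direct substitution into \eqref{varphi1}:
\[
\widehat{\varphi_1}(\xi) \; = \; \frac{\widehat\varphi(\xi)}{\sqrt{\Phi(\xi)}} \; = \; \sum_{k \in \Z^d} b_k\, e^{-2\pi i(k,\xi)}\,\widehat\varphi(\xi) \; = \; \sum_{k \in \Z^d} b_k\, \widehat{\varphi(\cdot-k)}(\xi),
\]
and then passing to the space domain termwise. The interchange of the inverse Fourier transform with the series is justified by the rapid decay of $(b_k)$ and the $L_2$-convergence of $\sum_k b_k \varphi(\cdot-k)$, so one concludes $\varphi_1(x) = \sum_k b_k\,\varphi(x-k)$.

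The main obstacle I expect is the clean proof of the periodization identity $\sum_k|\widehat\varphi(\xi+k)|^2 = \Phi(\xi)$; once that is established, positivity of $\Phi$ from Proposition \ref{prop1} immediately yields the Fourier expansion of $1/\sqrt{\Phi}$ and the rest of the argument is a formal manipulation of Fourier series.
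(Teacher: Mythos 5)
Your proposal is correct, but it takes a genuinely different route from the paper. The paper starts from the (asserted) expansion $\varphi_1(x)=\sum_k b_k\varphi(x-k)$ and works backwards: imposing orthonormality of the shifts of $\varphi_1$, it derives the convolution identity $\sum_m A_{j-m}\Phi_m=\delta_j^0$ with $A_p=\sum_k b_k\overline{b_{k-p}}$, hence $A(\xi)\Phi(\xi)\equiv 1$ and $|B(\xi)|=1/\sqrt{\Phi(\xi)}$, where $B(\xi)=\sum_k b_k e^{-2\pi i(k,\xi)}$. You instead argue forwards from the defining formula \eqref{varphi1}: you first prove the periodization identity $\sum_k|\widehat\varphi(\xi+k)|^2=\Phi(\xi)$ by unfolding and Parseval (this is exactly part 2 of Theorem \ref{th_four}, which the paper only establishes in the following section and does not invoke in its proof of Theorem \ref{th_ort}), then use the Riesz-basis property (Proposition \ref{prop1}) plus compact support to get a strictly positive trigonometric polynomial $\Phi$, expand $1/\sqrt{\Phi}$ into a rapidly decaying Fourier series, and read off the expansion of $\varphi_1$ by termwise inversion in $L_2$. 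Your route buys two things: it justifies the existence of the expansion rather than assuming it, and it identifies $B(\xi)=1/\sqrt{\Phi(\xi)}$ exactly, whereas the paper's orthonormality argument by itself only pins down $|B(\xi)|$ (i.e.\ determines the coefficients up to a unimodular periodic factor) and implicitly relies on the later Theorem \ref{th_four} to match the specific choice $b_k=\widehat{(1/\sqrt{\Phi})}_k$ with the definition \eqref{varphi1}. What the paper's route buys in exchange is the observation that these coefficients are forced by orthonormality alone, independently of the periodization identity.
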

\begin{proof}
We decompose the orthogonalized spline $\varphi_1(x)$ by integer translates of $\varphi$: 
$$\varphi_1(x) = \sum _{k \in \Z^d}{b_k \varphi(x - k)}.$$  
Then for every $j \in \Z^d$, 
$$\varphi_1(x + j) = \sum \limits_{k \in \Z^d} b_k \varphi(x - (k - j)) \stackrel{(l = k - j)}{=} \sum \limits_{l \in \Z^d} b_{l + j} \varphi(x - l).$$
The shifts orthogonality property has the form (where $\delta_j^0$ denotes the Kronecker symbol) 
$$\delta_j^0 = (\varphi_1, \varphi_1(\cdot + j)) = \sum \limits_{k, l \in \Z^d} b_k \overline{b_{l + j}} (\varphi(\cdot - k), \varphi(\cdot - l)) = \sum \limits_{k, l \in \Z^d} b_k \overline{b_{l + j}} \Phi_{k - l}.$$
This can be rewritten as $$\sum \limits_{m \in \Z^d} \left(\sum \limits_{k \in \Z^d} b_k \overline{b_{k - m + j}}\right) \Phi_{m} = \delta_j^0.$$
Denote by $A_p = \sum_{k \in \Z^d} b_k \overline{b_{k - p}}$. Then we obtain that for every $j \in \Z^d$ 
$$\sum \limits_{m \in \Z^d} A_{j - m}\Phi_m = \delta_j^0$$
In other words, the convolution of sequences $A$ and $\Phi$ is the $\delta$-sequence. Then the product of their Fourier transforms $A(\xi) = \sum_{k \in \Z^d} A_k e^{-2 \pi i (k, \xi)}$ and $\Phi(\xi) = \sum_{k \in \Z^d} \Phi_k e^{-2 \pi i (k, \xi)}$ is identically equal to $1$. Thus, $A(\xi) = \frac{1}{\Phi(\xi)}$. 
We also consider the Fourier transform $B(\xi) = \sum_{k \in \Z^d} b_k e^{-2 \pi i (k, \xi)}$. Then we have   
$$A_p = \int B(\xi) \overline{B(\xi)} e^{2 \pi i (p, \xi)}d\xi = \int |B(\xi)|^2 e^{2 \pi i (p, \xi)}d\xi.$$
Hence, $|B(\xi)|^2 = \sum_{p \in \Z^d} A_p e^{-2 \pi i (p, \xi)} = A(\xi)$. Then, 
$$|B(\xi)| = \frac{1}{\sqrt{\Phi(\xi)}},$$
i.e., we express the coefficients $b_k$ of the expansion of the function $\varphi_1$ with respect to the shifts of the function $\varphi$ using the numbers $\Phi_k$. \end{proof}
By definition of the coefficients $\Phi_k = (\varphi, \varphi(\cdot + k))$, their calculation requires numerical integration. It turns out, however, that they could be found easily as the components of an eigenvector of a special matrix. This will be done in next section. 
\end{section}

\begin{section}{Formulas for coefficients $\Phi_k$} \label{phik}
In order to find the new refinable function $\varphi_1$ whose integer shifts will be orthonormal, we need to find the auxiliary numbers $\Phi_k = (\varphi, \varphi(\cdot + k))$. 
\begin{theorem} \label{th_four} 1) For every integer $k$, the number $\Phi_k$ is the value of function $\varphi(x) * \varphi(-x)$ at point $-k$. 

2) For the Fourier series constructed by the coefficients $\Phi_k$, it holds that  
$$\Phi(\xi) := \sum_{k \in \Z^d} \Phi_k e^{-2 \pi i (k, \xi)} = \sum \limits_{k \in \Z^d} |\widehat{\varphi}(\xi + k)|^2.$$

\end{theorem}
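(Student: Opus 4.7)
The plan is to prove (1) by unwinding the definitions, then use (1) together with a standard unfolding of a periodic sum to obtain (2).

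For part (1), I would start from $\Phi_k=(\varphi,\varphi(\cdot+k))=\int\varphi(x)\,\overline{\varphi(x+k)}\,dx$, observe that $\varphi=B_n^G$ is real-valued (a convolution of real indicators), and drop the conjugate. Writing $\tilde\varphi(x):=\varphi(-x)$, the convolution reads $(\varphi*\tilde\varphi)(y)=\int\varphi(x)\,\tilde\varphi(y-x)\,dx=\int\varphi(x)\,\varphi(x-y)\,dx$. Setting $y=-k$ yields exactly $\int\varphi(x)\varphi(x+k)\,dx=\Phi_k$, which is what is claimed. (A change of variables also shows $(\varphi*\tilde\varphi)(k)=(\varphi*\tilde\varphi)(-k)$, so the statement is insensitive to the sign.)

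For part (2), since $\varphi\in L^2$ has compact support we know $\widehat\varphi$ decays fast enough that the series $F(\xi):=\sum_{k\in\Z^d}|\widehat\varphi(\xi+k)|^2$ converges and defines a $\Z^d$-periodic function. I would compute its Fourier coefficients directly: for $m\in\Z^d$,
\[
\int_{[0,1]^d} F(\xi)\,e^{2\pi i(m,\xi)}\,d\xi
=\sum_{k\in\Z^d}\int_{[0,1]^d+k}|\widehat\varphi(\xi)|^2\,e^{2\pi i(m,\xi)}\,d\xi
=\int_{\R^d}|\widehat\varphi(\xi)|^2\,e^{2\pi i(m,\xi)}\,d\xi,
\]
where I use $e^{2\pi i(m,k)}=1$ to absorb the shifts. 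Next, since $\varphi$ is real, $\overline{\widehat\varphi(\xi)}=\widehat\varphi(-\xi)=\widehat{\tilde\varphi}(\xi)$, so $|\widehat\varphi(\xi)|^2=\widehat\varphi(\xi)\widehat{\tilde\varphi}(\xi)=\widehat{\varphi*\tilde\varphi}(\xi)$. Fourier inversion therefore gives $\int_{\R^d}|\widehat\varphi(\xi)|^2 e^{2\pi i(m,\xi)}\,d\xi=(\varphi*\tilde\varphi)(m)$, and part (1) (together with the symmetry just noted) identifies this value with $\Phi_m$. Hence $F$ has the same Fourier coefficients as $\Phi(\xi)=\sum_k\Phi_k e^{-2\pi i(k,\xi)}$, proving the identity.

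The only delicate point is justifying the interchange of sum and integral in the unfolding step and the applicability of Fourier inversion; both follow from the fact that $\varphi$ is bounded with compact support, so $\varphi*\tilde\varphi$ is continuous and compactly supported while $\widehat\varphi$ lies in $L^2\cap L^\infty$ with enough decay to make $\sum_k|\widehat\varphi(\xi+k)|^2$ locally integrable. I do not foresee a deeper obstacle here; the content of the theorem is essentially the observation that the Battle--Lemarié denominator $\sum_k|\widehat\varphi(\xi+k)|^2$ is the symbol of the Gramian of integer shifts, which is already computable from $\varphi*\tilde\varphi$ at integer points.
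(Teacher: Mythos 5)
Your proposal is correct and follows essentially the same route as the paper: part (1) by direct computation using that $\varphi$ is real, and part (2) via a Plancherel/Parseval-type identity identifying $\int_{\R^d}|\widehat{\varphi}(\xi)|^2 e^{\pm 2\pi i(m,\xi)}\,d\xi$ with $\Phi_m$ and then matching Fourier coefficients of the periodization $\sum_k|\widehat{\varphi}(\xi+k)|^2$. The only difference is presentational: you make explicit the unfolding of the periodized sum and the use of Fourier inversion on $\varphi*\varphi(-\cdot)$, steps the paper leaves implicit, which is a welcome clarification rather than a new idea.
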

From the part 1) it follows that only a finite number of coefficients $\Phi_k$ are nonzero, hence, $\Phi(\xi)$ is a  trigonometric polynom. 
\begin{proof}[of Theorem \ref{th_four}]
Denote by $f$ the function $\varphi(x) * \varphi(-x)$. Then given that the function $\varphi$ is real-valued, we have  
$$f(y) = \int \varphi(x)\varphi(x - y)dx,$$
therefore we obtain 
$$f(-k) = \int \varphi(x)\varphi(x + k)dx = \Phi_k$$
at the point $y = -k$. 

For the proof of the second part we note that by the Plancherel theorem,  
$$\Phi_k = \int \varphi(x)\varphi(x + k)dx = %\int \widehat{\varphi}(\xi) \overline{\widehat{\varphi(\cdot + k)}(\xi)}d\xi = 
\int \widehat{\varphi}(\xi) \overline{\widehat{\varphi}(\xi)} e^{-2 \pi i (k, \xi)} d\xi = \int |\widehat{\varphi}(\xi)|^2  e^{-2 \pi i (k, \xi)} d\xi,$$
since $\widehat{\varphi(\cdot + k)}(\xi) = e^{2 \pi i (\xi, k)}\varphi(\xi)$.

From this it follows that the Fourier coefficients of the function $\sum \limits_{k \in \Z^d} |\widehat{\varphi}(\xi + k)|^2$ 
 also coincide with the numbers $\Phi_k$. Thus, $\Phi(\xi) = \sum_{k \in \Z^d} \Phi_k e^{-2 \pi i (k, \xi)} = \sum \limits_{k \in \Z^d} |\widehat{\varphi}(\xi + k)|^2$, this completes the proof. 
\end{proof}

\begin{remark}
In Theorem \ref{th_ort} the values $\Phi_k = (\varphi, \varphi(\cdot + k))$ are defined as scalar products, whose calculation requires the  numerical integration. In Theorem \ref{th_four} we showed that they are equal to the values of the function $f := \varphi(x) * \varphi(-x)$ at integer points. This function satisfies a refinement equation with the mask $a(\xi) \bar{a}(\xi) = |a(\xi)|^2$. In particular, if $\varphi(x)$ is the tile B-spline $B_n$, then $f = \varphi(x) * \varphi(-x)$ is the symmetrized tile B-spline $\Bs_n$. If $\varphi(x)$ is the symmetrized tile B-spline $\Bs_n$, then $f = \varphi(x) * \varphi(-x)$ is the symmetrized B-spline $\Bs_{2n}$. 
Therefore, knowing the refinement equation of $\varphi(x)$, we can find the coefficients $\Phi_k$ as the components of the eigenvector of the special matrix (see Remark \ref{rem_constr_phi}).
\end{remark}

Orthogonalized tile B-splines for Bear-2, Bear-4, Dragon-2, Dragon-4, Square-2, Square-4 are depicted in Fig. \ref{ort_spl}. 

\begin{figure}[ht]
\begin{center}
\begin{minipage}[h]{0.48\linewidth}
\center{\includegraphics[width=1\linewidth]{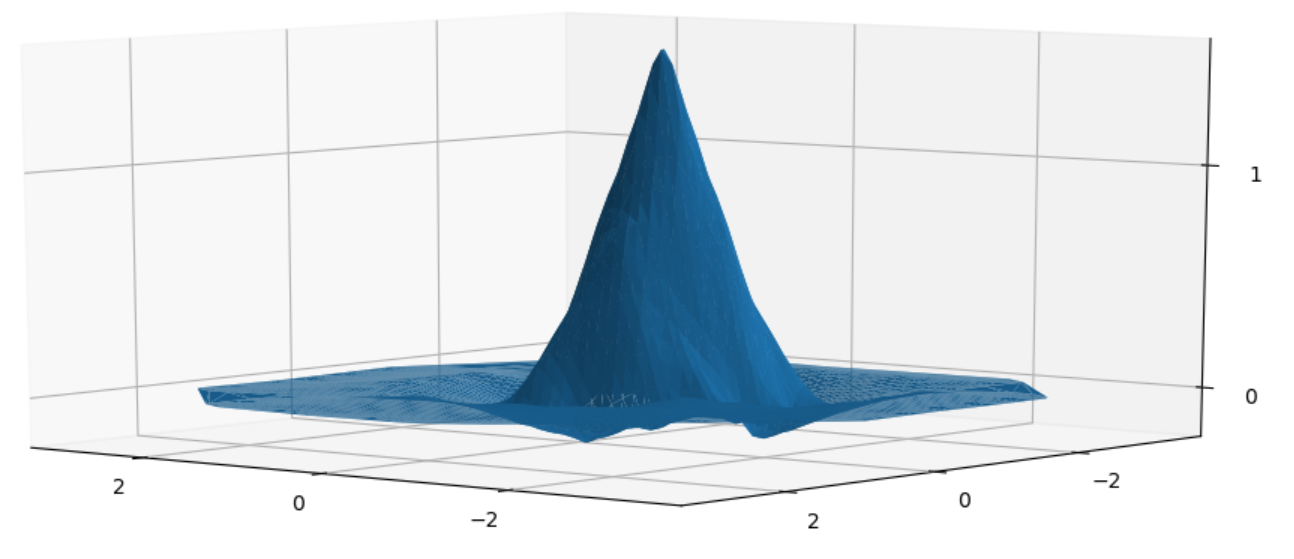}} \\ Orthogonalized Bear-2.
\end{minipage}
\hfill
\begin{minipage}[h]{0.48\linewidth}
\center{\includegraphics[width=1\linewidth]{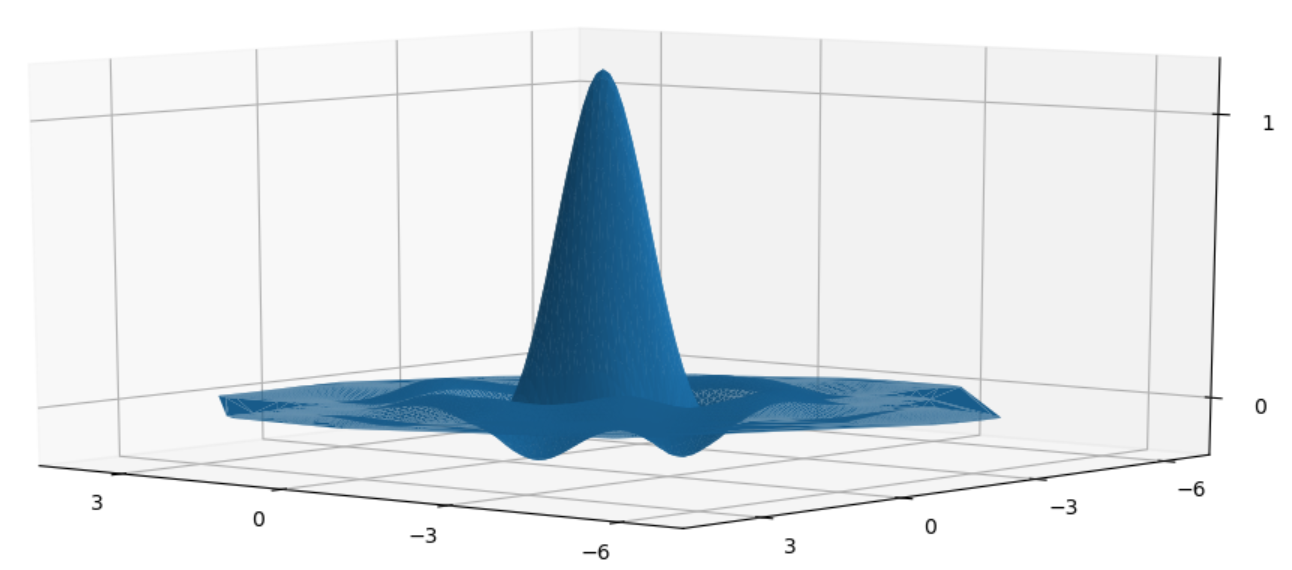}} \\ Orthogonalized Bear-4.
\end{minipage}
\vspace{\baselineskip}
\begin{minipage}[h]{0.48\linewidth}
\center{\includegraphics[width=1\linewidth]{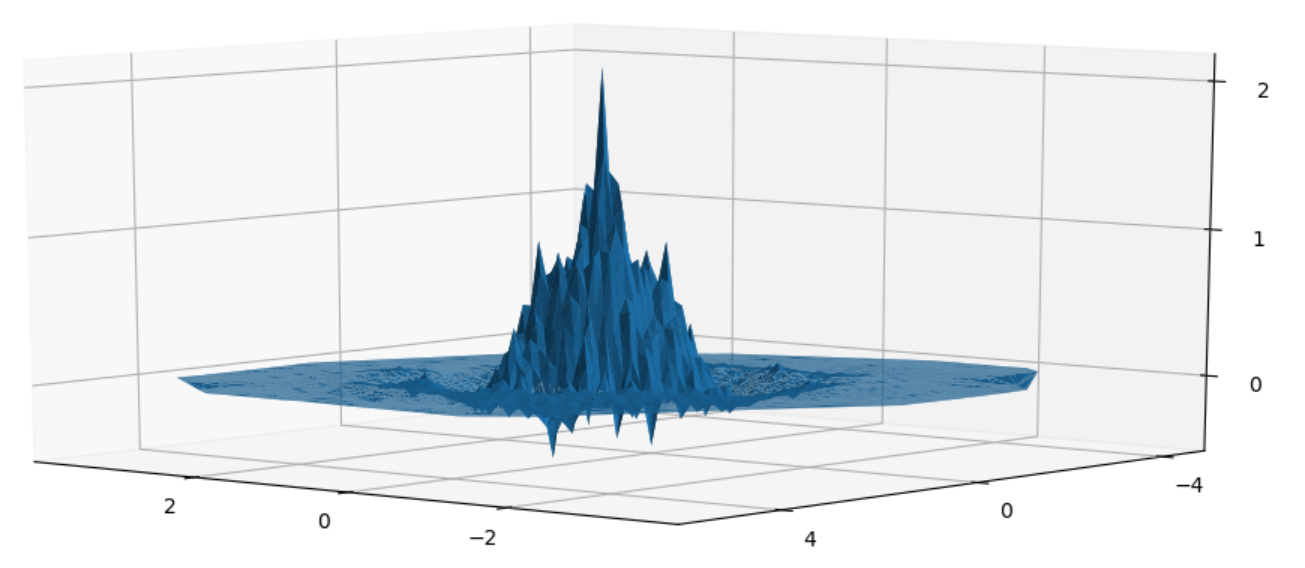}} \\ Orthogonalized Dragon-2.
\end{minipage}
\hfill
\begin{minipage}[h]{0.48\linewidth}
\center{\includegraphics[width=1\linewidth]{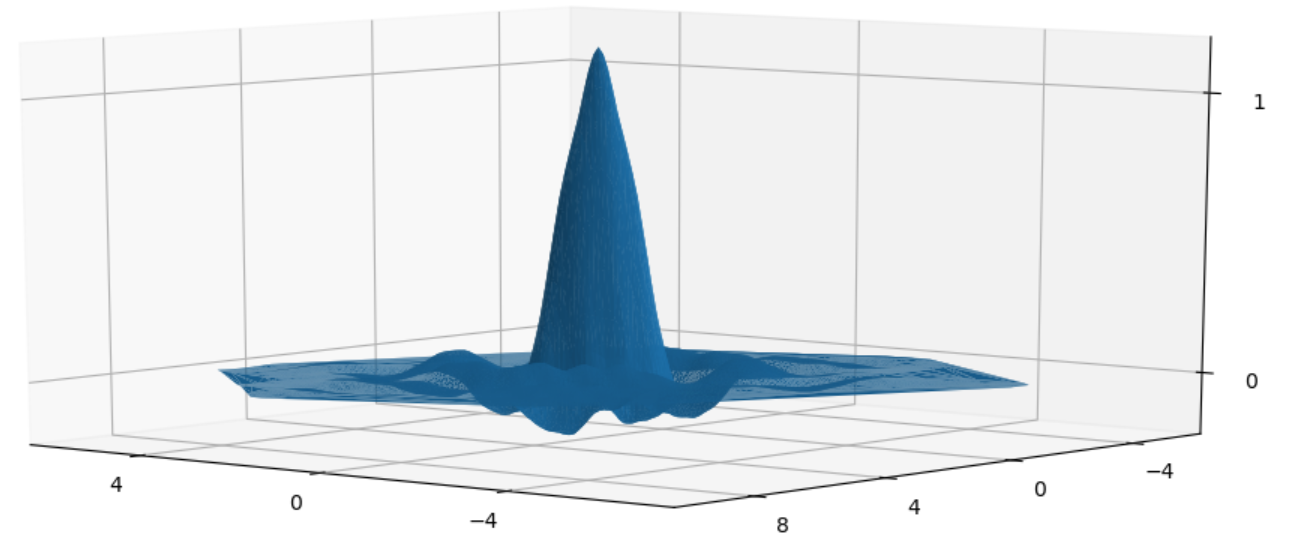}} \\ Orthogonalized Dragon-4. 
\end{minipage}
\vspace{\baselineskip}
\begin{minipage}[h]{0.48\linewidth}
\center{\includegraphics[width=1\linewidth]{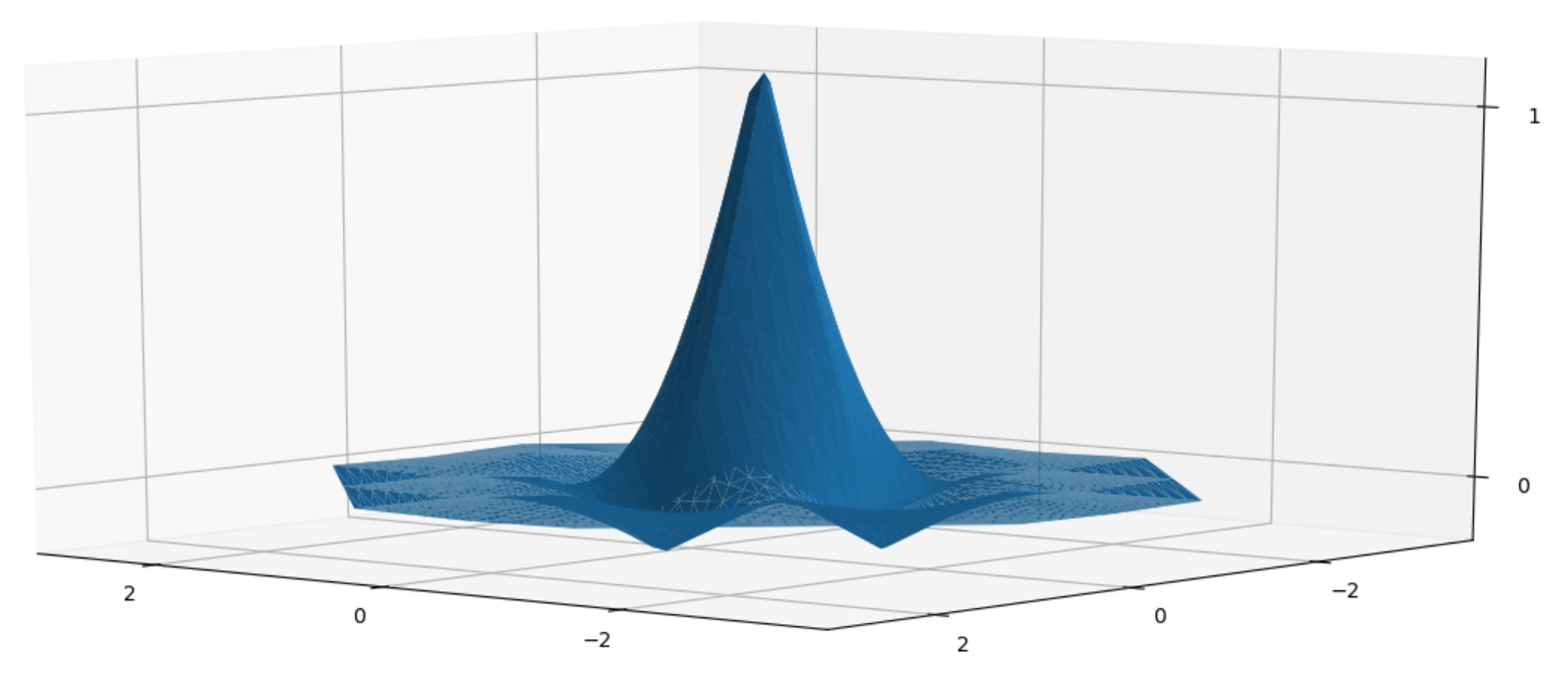}} \\ Orthogonalized Square-2.
\end{minipage}
\hfill
\begin{minipage}[h]{0.48\linewidth}
\center{\includegraphics[width=1\linewidth]{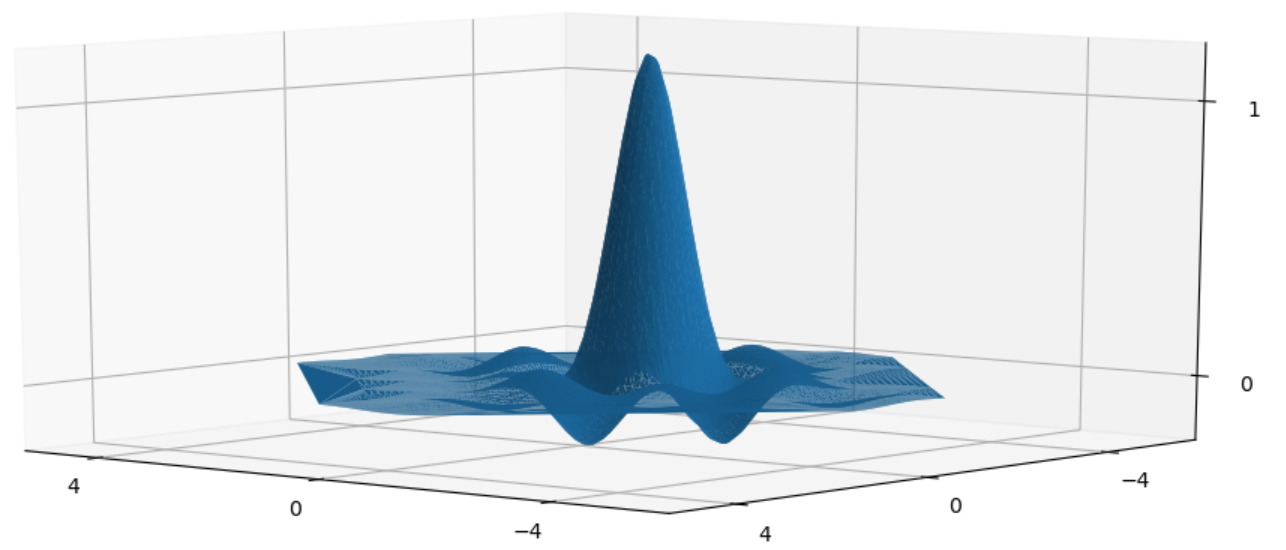}} \\ Orthogonalized Square-4. 
\end{minipage}
\caption{Orthogonalized tile B-splines.}
\label{ort_spl}
\end{center}
\end{figure}

\end{section}

\begin{section}{The construction of wavelet function}\label{wavelet}
Recall that $\Phi(\xi) = \sum_{k \in \Z^d} |\widehat{\varphi}(\xi + k)|^2$. It is easy to prove the following fact: 
\begin{propos} Let a tile B-spline $\varphi(x)$ satisfy a refinement equation with mask $a(\xi)$. Then its orthogonalization $\varphi_1(x)$ is the solution of a refinement equation with mask  
\begin{equation}\label{ort_mask}
a_1(\xi) = a(\xi)\frac{\sqrt{\Phi(\xi)}}{\sqrt{\Phi(M^T \xi)}}.
\end{equation}
\end{propos}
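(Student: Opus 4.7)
The plan is a direct computation in the Fourier domain. By the general equivalence \eqref{fourie}, the refinement equation for $\varphi_1$ with dilation matrix $M$ is equivalent to the identity $\widehat{\varphi_1}(M^T\xi) = a_1(\xi)\,\widehat{\varphi_1}(\xi)$, so it suffices to compute the ratio $\widehat{\varphi_1}(M^T\xi)/\widehat{\varphi_1}(\xi)$ explicitly and check that what comes out is a legitimate mask.

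First, I would combine the orthogonalization formula \eqref{varphi1} with Theorem \ref{th_four}(2) to rewrite the definition of $\varphi_1$ in the compact form $\widehat{\varphi_1}(\xi) = \widehat{\varphi}(\xi)/\sqrt{\Phi(\xi)}$. Substituting $M^T\xi$ for $\xi$ and applying \eqref{fourie} to $\varphi$ itself yields
$$\widehat{\varphi_1}(M^T\xi) \; = \; \frac{\widehat{\varphi}(M^T\xi)}{\sqrt{\Phi(M^T\xi)}} \; = \; \frac{a(\xi)\,\widehat{\varphi}(\xi)}{\sqrt{\Phi(M^T\xi)}} \; = \; a(\xi)\,\frac{\sqrt{\Phi(\xi)}}{\sqrt{\Phi(M^T\xi)}}\,\widehat{\varphi_1}(\xi),$$
where in the last step I multiplied and divided by $\sqrt{\Phi(\xi)}$ and used the formula for $\widehat{\varphi_1}$ once more. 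Reading off the coefficient yields the stated expression \eqref{ort_mask} for $a_1$.

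The second step is to verify that $a_1$ is indeed a well-defined mask, i.e., a $\Z^d$-periodic function on $\R^d$. Periodicity of $a$ and of $\Phi$ is built in (both are Fourier series over $\Z^d$). For $\xi \mapsto \Phi(M^T\xi)$, periodicity follows from $M^T \Z^d \subset \Z^d$: shifting $\xi$ by an integer vector shifts $M^T\xi$ by an integer vector, under which $\Phi$ is invariant. Moreover, the Riesz basis property of the integer translates of $\varphi$ (Proposition \ref{prop1}) is equivalent to $0 < c \le \Phi(\xi) \le C < \infty$, so the positive square roots are globally defined smooth functions and the quotient is bounded and bounded away from zero.

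I do not expect a genuine obstacle here; the proof is essentially an algebraic manipulation in Fourier space. The only conceptual point worth underlining is that $a_1$ is in general not a trigonometric polynomial, even though $a$ is — because $\sqrt{\Phi(\xi)}/\sqrt{\Phi(M^T\xi)}$ is only a $C^\infty$ $\Z^d$-periodic function. This is the Fourier-side reason that the orthogonalized B-spline $\varphi_1$ loses compact support, consistent with the discussion preceding Theorem \ref{th_ort} and motivating the decay estimates of Section \ref{par_finite}.
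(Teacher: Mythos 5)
Your proof is correct and is essentially the paper's own argument: both compute the ratio $\widehat{\varphi}_1(M^T\xi)/\widehat{\varphi}_1(\xi)$ using $\widehat{\varphi}_1 = \widehat{\varphi}/\sqrt{\Phi}$ together with the refinement relation \eqref{fourie} for $\varphi$, and then conclude by checking $\Z^d$-periodicity of the resulting expression from the periodicity of $a$ and $\Phi$ and the integrality of $M$. Your added remarks (boundedness of $\Phi$ away from zero via the Riesz basis property, and the observation that $a_1$ is no longer a trigonometric polynomial) are sound but only supplement the same route.
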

\begin{proof}
Using the representation of refinement equations after the Fourier transform \eqref{fourie}, it is sufficient to check the 1-periodicity of a function 
$$a_1(\xi) = \frac{\widehat \varphi_1(M^T \xi)}{\widehat \varphi_1(\xi)} = \frac{\widehat \varphi(M^T \xi) \sqrt{\Phi(\xi)}}{\widehat \varphi(\xi) \sqrt{\Phi(M^T \xi)}} = a(\xi)\frac{\sqrt{\Phi(\xi)}}{\sqrt{\Phi(M^T \xi)}}.$$
Since the functions $a(\xi), \Phi(\xi)$ are 1-periodic and the matrix $M$ is integer, it holds. 
\end{proof}

Thus, we can find the coefficients $c_k$ of the refinement equation for the function $\varphi_1$ by the Fourier expansion of the mask $a_1(\xi)$ defined by \eqref{ort_mask}: 
$$a_1(\xi) = \frac{1}{m} \sum \limits_{k \in \Z^d} c_{k} e^{-2 \pi i (k, \xi)}.$$

There are infinitely many nonzero coefficients $c_k$, therefore, the new refinable function $\varphi_1$ is not compactly supported. However, as we will see later, it has an exponentional decay as $\xi \to \infty$ that will allow us to effectively approximate it by compactly-supported functions (see Section \ref{par_finite}). 

Now we turn to the explicit construction of the wavelet function corresponding to the orthogonalized function $\varphi_1(x)$ in the two-digit case, i.e., when $m = 2$. In this case the Haar system has the simplest form since it is generated by a single wavelet function. The next theorem is the version of the general statement about the construction of orthonormal wavelets (see, for example, \cite{Woj}). Nevertheless, we give its full proof for two-digit tile B-splines. 

\begin{theorem} \label{th_psi}
Let $G$ be a two-digit tile (2-tile), i.e., $m = |\det{M}| = 2$, and $\varphi = B_n^G$ be its corresponding tile B-spline. Denote by $\varphi_1$ its orthogonalization, by $c_k$ its coefficients of refinement equation for $\varphi_1$. Then 

1) the corresponding wavelet function $\psi(x)$ is the linear combination of M-dilations of the function $\varphi_1$ with coefficients $\pm c_k$. 

2) For three types of affinely non-equivalent two-digit tiles, whose matrices are defined by formula \eqref{twotiles}, the following formulas for wavelet function hold: 

a) If $G$ is the tile ``Bear'', i.e., $M = M_B$, then  
\begin{equation} \label{psi}
\psi_B(x) = \sum \limits_{k \in K} c_k (-1)^{(k_2 - k_1)} \varphi_1 \left(M_B x + k - \begin{pmatrix} 0 \\ 1 \end{pmatrix}\right);
\end{equation}
b) If $G$ is the tile ``Square'', i.e., $M = M_S$, then
$$\psi_S(x) = \sum \limits_{k \in K} c_k (-1)^{k_1} \varphi_1 \left(M_S x + k - \begin{pmatrix} 1 \\ 0 \end{pmatrix}\right);$$
c) If $G$ is the tile ``Dragon'', i.e., $M = M_D$, then  
$$
\psi_D(x) = \sum \limits_{k \in K} c_k (-1)^{(k_2 - k_1)} \varphi_1 \left(M_D x + k - \begin{pmatrix} 0 \\ 1 \end{pmatrix}\right).$$
\end{theorem}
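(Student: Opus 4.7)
The plan is to build $\psi$ via the standard QMF (quadrature mirror filter) construction, which in the two-digit case produces a single wavelet generator whose mask is canonically determined by $a_1$ up to a choice of translate. Then we specialize the construction to each of the three matrices $M_S,M_D,M_B$.

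First I would recall why the orthonormality of the shifts of $\varphi_1$ forces the mask $a_1$ to satisfy $|a_1(\xi)|^2+|a_1(\xi+\eta_0)|^2\equiv 1$, where $\eta_0$ is the unique non-zero element of $(M^T)^{-1}\Z^d/\Z^d$ (which exists and is unique since $|\det M^T|=2$). This is the 2-digit QMF identity, and it follows by summing the orthonormality relation $\sum_k|\widehat\varphi_1(\xi+k)|^2\equiv 1$ and invoking \eqref{fourie}. To construct $\psi$, I would set
\[
\widehat{\psi}(M^T\xi)\;=\;b_1(\xi)\,\widehat{\varphi_1}(\xi),\qquad b_1(\xi)\;=\;e^{-2\pi i(\xi,s)}\,\overline{a_1(\xi+\eta_0)},
\]
with a fixed $s\in\Z^d$ to be chosen below. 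The QMF identity together with $e^{2\pi i(s,\eta_0)}=\pm1$ implies the two conditions $|a_1|^2+|b_1|^2\equiv 1$ and $a_1(\xi)\overline{b_1(\xi)}+a_1(\xi+\eta_0)\overline{b_1(\xi+\eta_0)}\equiv 0$. These two equations say precisely that the $2\times2$ modulation matrix is unitary; by the standard MRA argument (see e.g.\ \cite{Woj,KPS}), the integer translates of $\psi$ form an orthonormal basis of $W_0=V_1\ominus V_0$, which is exactly what is needed. Expanding $\overline{a_1(\xi+\eta_0)}=\frac12\sum_k\overline{c_k}e^{2\pi i(k,\xi+\eta_0)}$ and reindexing $j=s-k$ gives $b_1(\xi)=\frac12\sum_j d_j e^{-2\pi i(j,\xi)}$ with $d_j=e^{2\pi i(s,\eta_0)}e^{-2\pi i(j,\eta_0)}\overline{c_{s-j}}$. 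Since the coefficients $c_k$ are real (because $\varphi_1$ is real-valued), this proves part (1): $\psi$ is a linear combination of dilates $\varphi_1(Mx-j)$ with coefficients $\pm c_{s-j}$.

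For part (2) I would compute $\eta_0$ for each of the three matrices in \eqref{twotiles}: a direct inversion gives $\eta_0=(\tfrac12,0)^T$ for $M_S$ and $\eta_0=(\tfrac12,\tfrac12)^T$ for both $M_B$ and $M_D$. In each case I would choose $s$ so that $(s,\eta_0)=\tfrac12$, namely $s=(1,0)^T$ for Square and $s=(0,1)^T$ for Bear and Dragon; this choice makes $e^{2\pi i(s,\eta_0)}=-1$ and also matches the shift inside $\varphi_1(Mx+k-s)$ appearing in the theorem. The final step is then purely a change of summation variable: substituting $k=s-j$ in $\psi(x)=\sum_j d_j\varphi_1(Mx-j)$ turns the factor $-e^{-2\pi i(j,\eta_0)}$ into $(-1)^{k_1}$ for Square and into $(-1)^{k_1+k_2}=(-1)^{k_2-k_1}$ for Bear and Dragon, yielding exactly formulas (a)--(c).

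The main obstacle I anticipate is the bookkeeping in the last step — correctly tracking the interplay between the sign $e^{2\pi i(s,\eta_0)}$, the character $e^{-2\pi i(j,\eta_0)}$, and the translation $s$ so that the resulting expression literally matches the form stated in \eqref{psi} and its analogues. Everything else is a direct application of the general MRA/wavelet machinery applied in the $|\det M|=2$ setting, which is why the two-digit hypothesis is essential: in higher multiplicity the wavelet space $W_0$ would require $m-1$ generators and the canonical single-generator formula would no longer exist.
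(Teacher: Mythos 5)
Your construction is correct and essentially the same as the paper's: the paper proves exactly your two modulation-matrix identities (its Lemma~\ref{lemma_ortho}, i.e.\ $|a_1(\xi)|^2+|a_1(\xi+v)|^2=1$ and the orthogonality relation) and uses the same high-pass mask $p(\xi)=e^{-2\pi i(\xi,s)}\,\overline{a_1(\xi+v)}$ with the same translations $s=(0,1)^T$ resp.\ $(1,0)^T$, carrying out the Bear case in detail and treating Square and Dragon ``similarly''. One small caution: the orthogonality identity requires $e^{2\pi i(s,\eta_0)}=-1$, not merely $\pm 1$ as you state at first, but your subsequent choice $(s,\eta_0)=\tfrac12$ does ensure this, so the argument goes through.
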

\begin{proof}
The wavelet function has the form  
$$\psi(x) = \sum \limits_{w \in W}p_w \varphi_1(Mx - w).$$
Consider the mask $p(\xi) = \frac{1}{m} \sum_{w \in W} p_w e^{-2\pi i (w, \xi)}$ for it, similarly to the mask \\ $a_1(\xi) = \frac{1}{m} \sum_{k \in K} c_k e^{-2\pi i (k, \xi)}$ for the refinement equation of $\varphi_1$. 
\begin{lemma}\label{lemma_ortho}
Let the vectors $0$, $u \in \Z^d$ be from two different cosets $\Z^d / M^T \Z^d$ of the matrix $M^T$. Let $v = M^{-T} u$. Then for the masks $a_1$ and $p$ of the scaling function and the wavelet function correspondigly, we have  

1) for every $s$ we have $|a_1(s)|^2 + |a_1(s + v)|^2 = 1$ (the orthonormality of $\varphi_1$); 

2) for every $s$ we have $p(s) \bar{a}_1(s) + p(s + v) \bar{a}_1(s + v) = 0$ (the orthogonality of $\varphi_1$ and $\psi$). 
\end{lemma}
\begin{proof}
From the refinement equation on $\varphi_1$ it follows that  
$$\widehat{\varphi}_1(\xi) = a_1(M_1^T \xi) \widehat{\varphi}_1(M_1^T \xi)$$ or  
$$\widehat{\varphi}_1(M^T\xi) = a_1(\xi) \widehat{\varphi}_1(\xi).$$
Since the integer shifts $\varphi_1$ are orthonormal, for every $s \in \R^d$, we have   
$$\sum \limits_{q \in \Z^d}|\widehat{\varphi}_1(s + q)|^2 = 1.$$ 
Choose the vectors $0$ and $u \in \Z^d$ from two cosets $\Z^d / M^T \Z^d$ defined by matrix $M^T$. Denote $v = M^{-T} u$.
\begin{multline*}1 = \sum \limits_{q \in \Z^d}|\widehat{\varphi}_1(M^Ts + q)|^2 = \sum \limits_{q \in \Z^d}|\widehat{\varphi}_1(M^Ts + M^Tq)|^2 + 
\sum \limits_{q \in \Z^d}|\widehat{\varphi}_1(M^Ts + M^Tq + M^Tv)|^2 = \\
= \sum \limits_{q \in \Z^d} |a_1(s + q)|^2 |\widehat{\varphi}_1(s + q)|^2 + \sum \limits_{q \in \Z^d} |a_1(s + q + v)|^2 |\widehat{\varphi}_1(s + q + v)|^2 = |a_1(s)|^2 + |a_1(s + v)|^2
\end{multline*}
Thus, for every $s$ we have $|a_1(s)|^2 + |a_1(s + v)|^2 = 1$ and 1) is proved. 

Similarly, from the orthogonality of $\varphi_1$ and $\psi$ we obtain 2).  
\end{proof}

Let us return to the proof of theorem. We will look for $p$ such that the orthogonality condition $p(s) \bar{a}_1(s) + p(s + v) \bar{a}_1(s + v) = 0$ from Lemma \ref{lemma_ortho} holds. Note that in the two-digit case the vector $2 \cdot v = 2 \cdot M_1^T u$ is integer, therefore $a_1(s + 2v) = a_1(s)$, $p(s + 2v) = p(s)$. 

Consider the Bear case with the matrix $M = M_B = \begin{pmatrix}1 & -2 \\ 1 & 0\end{pmatrix}$, then $M_1^T = \frac{1}{2}\begin{pmatrix}0 & -1 \\ 2 & 1\end{pmatrix}$. The vector $u$ can be chosen as $\begin{pmatrix}0 \\ 1\end{pmatrix}$. Then $v = \begin{pmatrix}-0.5 \\ 0.5\end{pmatrix}$. 

Therefore, we can propose the function $p(s) = e^{-2 \pi i s_2} \bar{a}_1(s + v)$ as a particular solution. 
Indeed, $p(s + v) = e^{-2 \pi i (s_2 + 0.5)} \bar{a}_1(s + 2 \cdot v)= -e^{-2 \pi i s_2} \bar{a}_1(s)$ and it is easy to check that the equality holds. 

Using the equality $\widehat{\psi}_B(\xi) = p(M_1^T \xi) \widehat{\varphi}_1(M_1^T\xi)$, we have   
$$\widehat{\psi}_B(\xi) = e^{-2 \pi i (M_1^T \xi)_2} \bar{a}_1( M_1^T \xi + v) \widehat{\varphi}_1(M_1^T\xi).$$
$$\widehat{\psi}_B(\xi) = e^{-2 \pi i ((0,1), M_1^T \xi)} \bar{a}_1(M_1^T \xi + v) \widehat{\varphi}_1(M_1^T\xi).$$
Since $$a_1(\xi) = \frac{1}{m} \sum \limits_{k \in K} c_k e^{-2\pi i (k, \xi)},$$
it follows that 
\begin{multline*}
\widehat{\psi}_B(\xi) = \frac{1}{m} \sum \limits_{k \in K} e^{-2 \pi i ((0,1), M_1^T \xi)} c_k e^{2\pi i (k, M_1^T \xi + v)} \widehat{\varphi}_1(M_1^T\xi) = \\
\frac{1}{m} \sum \limits_{k \in K} c_k e^{2\pi i (k, v)} e^{-2 \pi i (-k + (0,1), M_1^T \xi)} \widehat{\varphi}_1(M_1^T\xi)
\end{multline*}
Thus, for the Bear matrix, we conclude that  
$$\psi_B(x) = \sum \limits_{k \in K} c_k e^{2\pi i (k, v)} \varphi_1 \left(M_B x + k - \begin{pmatrix} 0 \\ 1 \end{pmatrix}\right),$$
where $v= \begin{pmatrix}-0.5 \\ 0.5\end{pmatrix}$. 
Note that since each of the vectors $k$ in the sum is integer, and the vector $v$ is half-integer, we see that the expronent $e^{2\pi i (k, v)}$ takes only  the values $\pm 1$ and hence,   
$$\psi_B(x) = \sum \limits_{k \in K} c_k (-1)^{(k_2 - k_1)} \varphi_1 \left(M_B x + k - \begin{pmatrix} 0 \\ 1 \end{pmatrix}\right),$$
this completes the proof. Similarly, we derive formulas for Dragon and Square. 
\end{proof}
The wavelet functions generated by Bears, Dragons, and Squares of order two and of order four are depicted in Fig. \ref{wave_spl}. 
\begin{figure}[ht]
\begin{center}
\begin{minipage}[h]{0.48\linewidth}
\center{\includegraphics[width=1\linewidth]{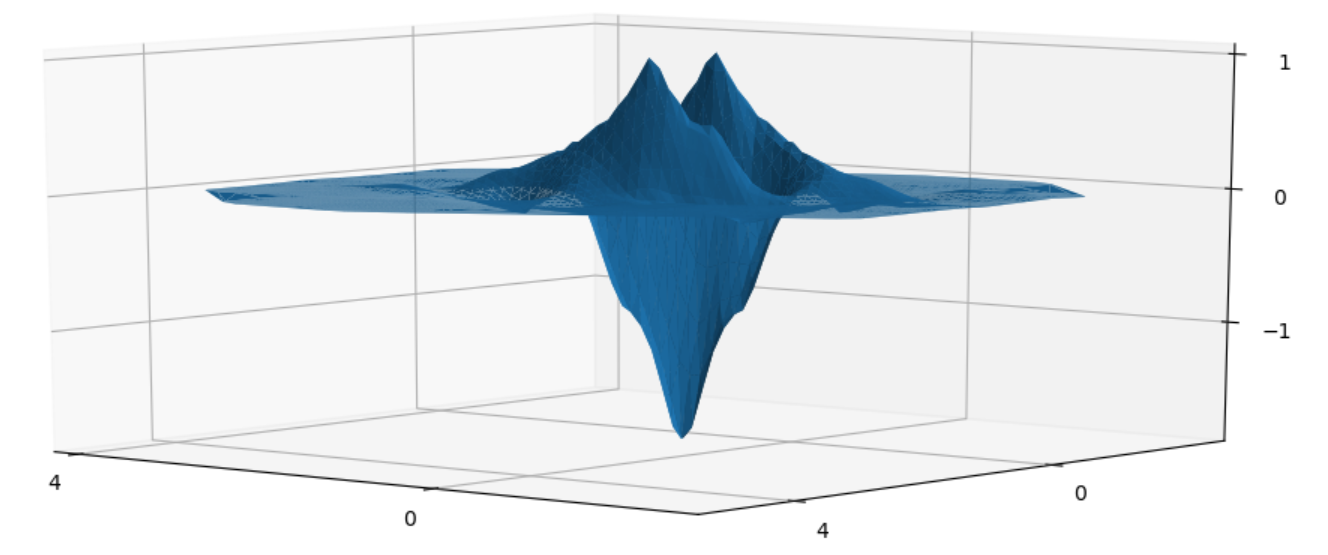}} \\ Bear-2 wavelet function.
\end{minipage}
\hfill
\begin{minipage}[h]{0.48\linewidth}
\center{\includegraphics[width=1\linewidth]{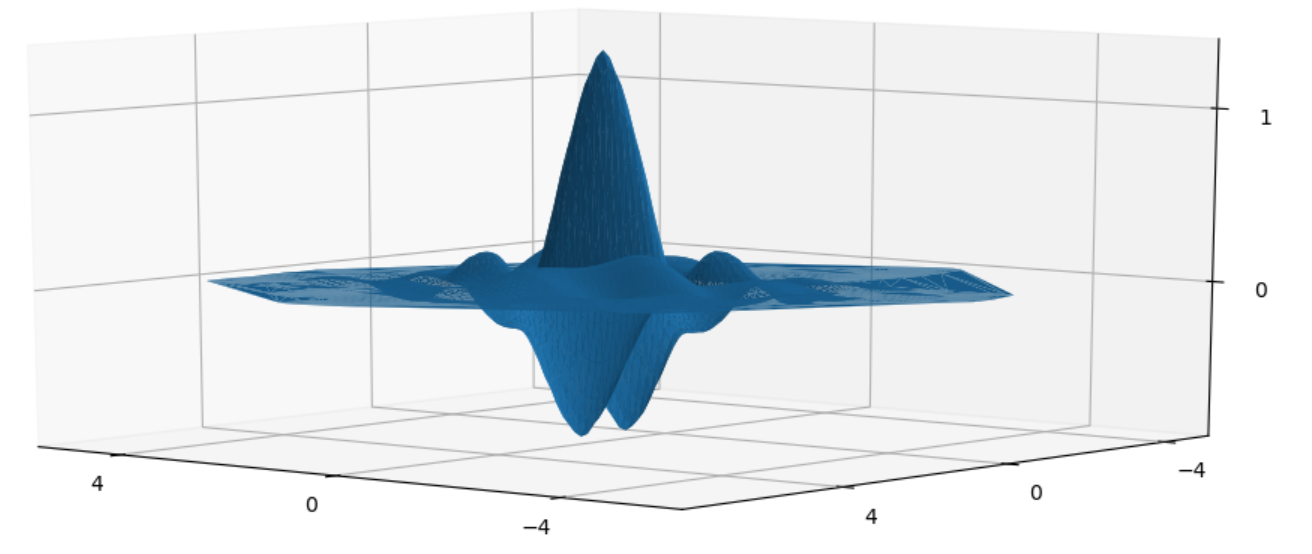}} \\ Bear-4 wavelet function.
\end{minipage}
\vspace{\baselineskip}
\begin{minipage}[h]{0.48\linewidth}
\center{\includegraphics[width=1\linewidth]{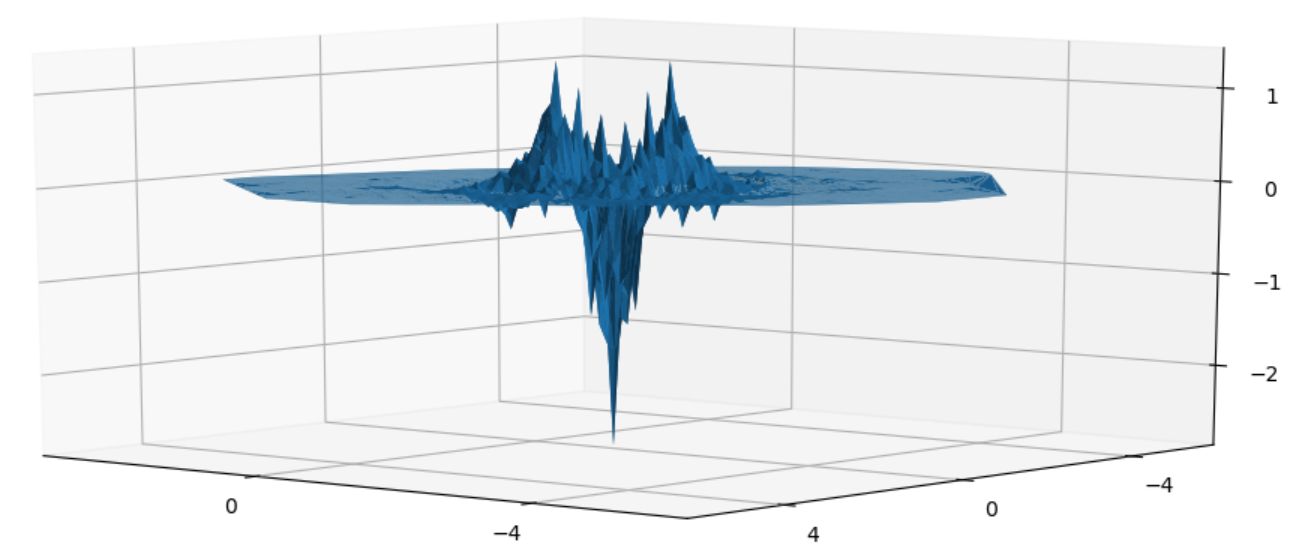}} \\ Dragon-2 wavelet function.
\end{minipage}
\hfill
\begin{minipage}[h]{0.48\linewidth}
\center{\includegraphics[width=1\linewidth]{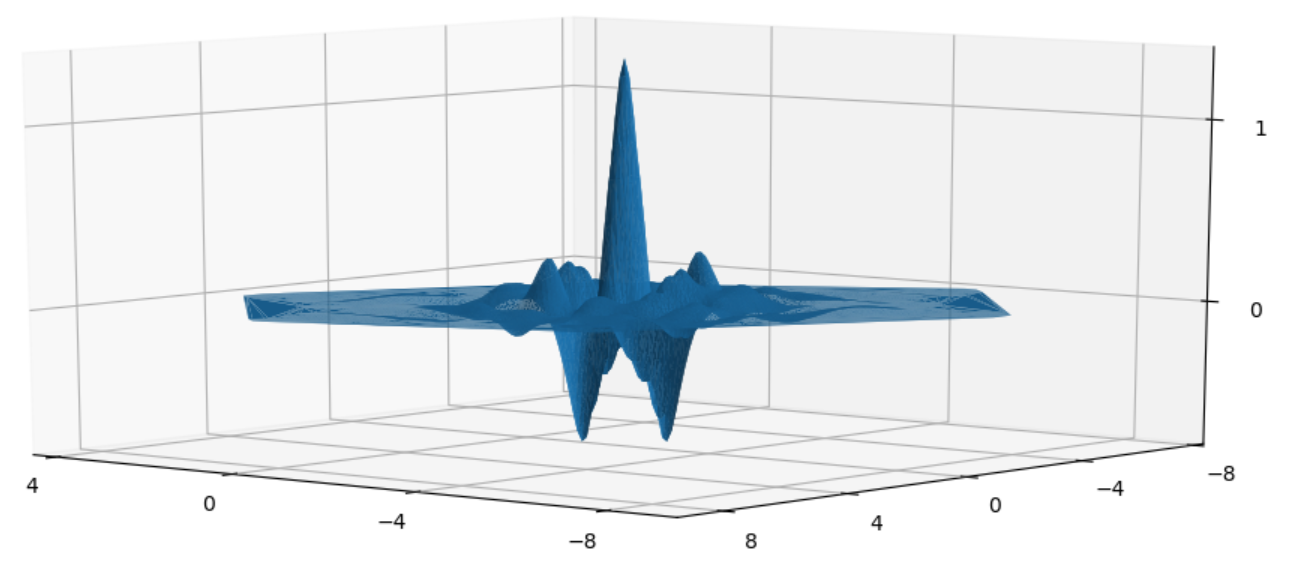}} \\ Dragon-4 wavelet function. 
\end{minipage}
\vspace{\baselineskip}
\begin{minipage}[h]{0.48\linewidth}
\center{\includegraphics[width=1\linewidth]{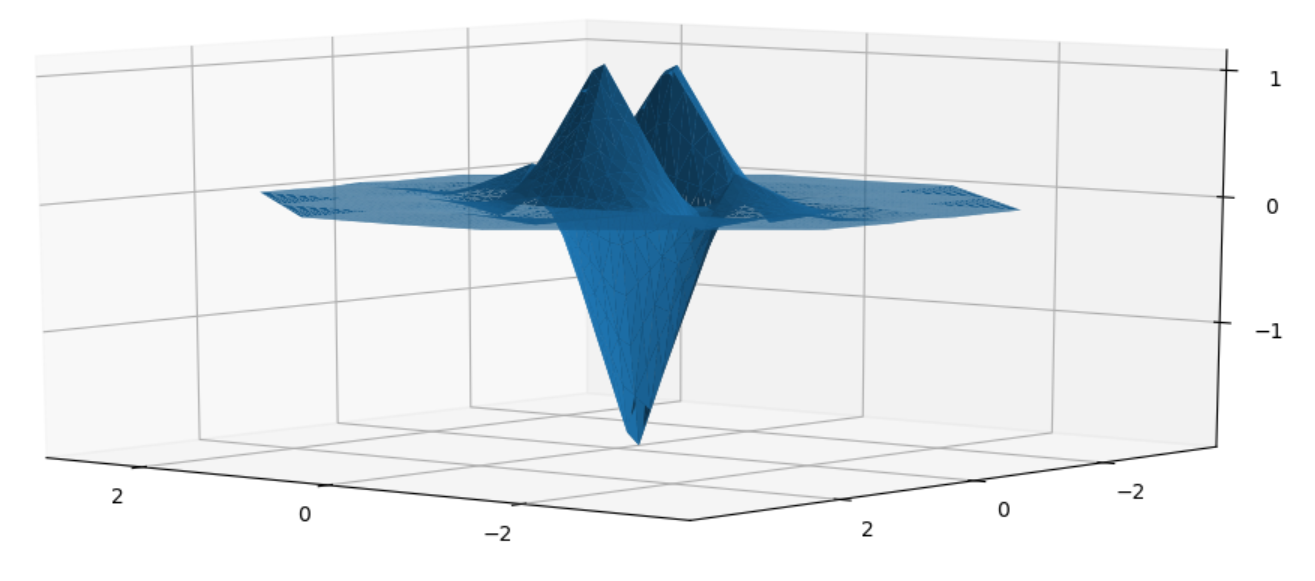}} \\ Square-2 wavelet function.
\end{minipage}
\hfill
\begin{minipage}[h]{0.48\linewidth}
\center{\includegraphics[width=1\linewidth]{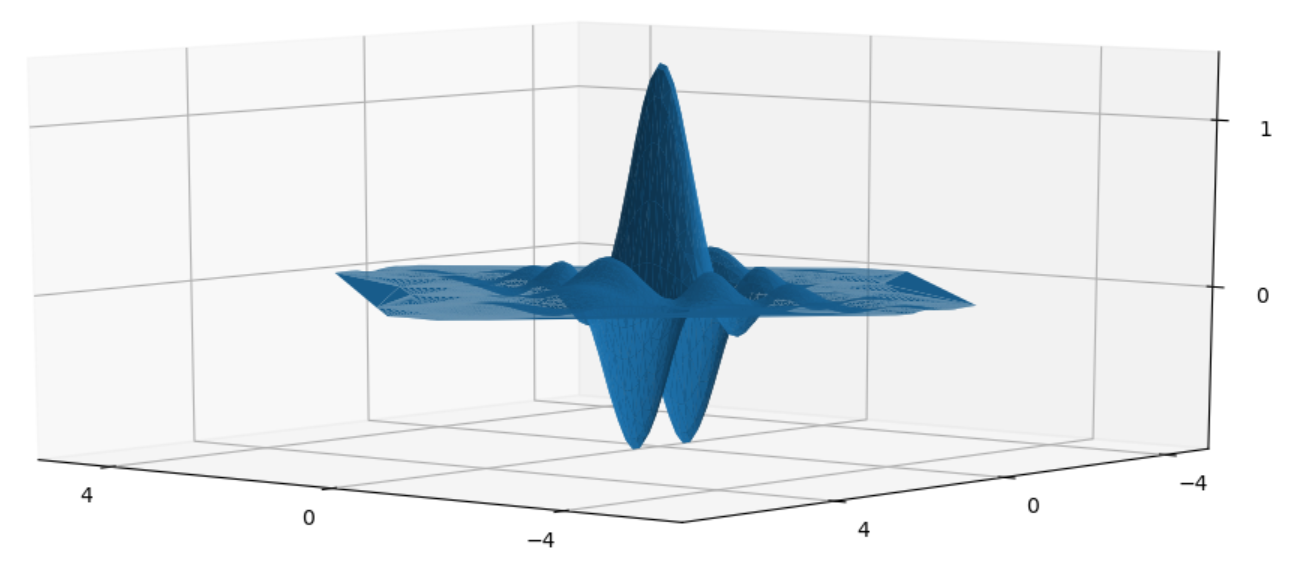}} \\ Square-4 wavelet function. 
\end{minipage}
\caption{Tile B-spline wavelet functions.}
\label{wave_spl}
\end{center}
\end{figure}
\end{section}

\begin{section}{The approximation of wavelet function by finite sums} \label{par_finite}
We have obtained formulas for orthogonal wavelet systems based on tile B-splines. 
Their use is complicated by the infinite summation. 
There are infinitely many summands in formula \eqref{psi}, since the mask $a_1(\xi)$, obtained after orthogonalization, is not a trigonometric polynomial. To estimate the accuracy of its approximation by trigonometric polynomials, one has to know the rate of decay of the coefficients $c_k$. 
We will prove that $|c_k| \le C_1 e^{-C_2 \|k\|}$, where $C_1, C_2$ are positive constants, and we will estimate the number $C_2$. Denote $z_1 = e^{-2 \pi i (e_1, \xi)}$, $z_2 = e^{-2 \pi i (e_2, \xi)}$, $z = (z_1, z_2)$, where $e_1 = \begin{pmatrix} 1 & 0 \end{pmatrix}$, $e_2 = \begin{pmatrix} 0 & 1 \end{pmatrix}$. Then $e^{-2 \pi i (k, \xi)} = z_1^{k_1}z_2^{k_2}$ and $a_1(z) = \frac{1}{m} \sum_{k \in \Z^2} c_k z_1^{k_1}z_2^{k_2}$. Considering the Laurent series of a function $a_1(z)$ in $\co^2$ and estimating the rate of decay of its coefficients, we obtain the estimate on $C_2$.   

We use the equality $a_1(\xi) = a(\xi)\frac{\sqrt{\Phi(\xi)}}{\sqrt{\Phi(M^T \xi)}}$.  
The original mask $a(z)$ (before orthogonalization) has a finite number of nonzero Fourier coefficients (since the initial equation is given by a finite number of coefficients $c_k$), therefore, the multiplier $a(\xi)$ does not influence the rate of decay of the coefficients $a_1(\xi)$. In addition, as we will see later, our estimate of the decay rate of the Laurent coefficients of expansion of a function into a Laurent series  depends only on its domain of holomorphy. Hence, we are interested only in zeros of the denominator $\sqrt{\Phi(M^T \xi)}$, i.e., zeros of the function $\Phi(M^T \xi)$. Thus, the final estimate on the rate of decay of the coefficients $c_k$ will be the estimate on Laurent coefficients of the function $\frac{1}{\Phi(M^T \xi)}$ after the change of variables to $z$. 
The location of zeros of $\Phi(M^T \xi)$ can be expressed in terms of zeros of the function $\Phi(\xi)$. Therefore, the rate of decay of $\frac{1}{\Phi(M^T \xi)}$ can be estimated of that of the function $\frac{1}{\Phi(\xi)}$. Since the Fourier coefficients of the denominator  are equal to $\Phi_k$, as it was shown in Section \ref{phik}, the Laurent coefficients of the function $\Phi(z)$ after change of variables are also equal to $\Phi_k$. Since we know the numbers $\Phi_k$, we can find the zeros of the denominator of the function $\frac{1}{\Phi(\xi)}$, as well as of the function $\frac{1}{\Phi(M^T \xi)}$. Further we will obtain the estimate on the decay rate of a function $f(z)=\frac{1}{g(z)}$ in general case, and then we will apply it to the function $\frac{1}{\Phi(M^T \xi)}$. 

\begin{subsection}{The rate of decay of the Laurent coefficients for bivariate holomorphic functions} \label{complexx}
Thus, we need to estimate the rate of decay of the coefficients of the function $f(z)=\frac{1}{g(z)}$ in the power expansion in $z \in \co^2$. To this end we invoke some facts from the  multivariate complex analysis. We study the structure of zeros of the function $g(z)$ to find the domain of holomorphy of function $f(z)$ and then estimate its coefficients. Note that for a holomorphic function of two complex variables its set of zeros is a union of continuous curves and, moreover, it does not have compact components. 

Let $B_R = B^-_R = \left\{z \in \co: \left|z\right| < R\right\}$ be a ball. \textit{The complement to the ball} of radius $r$ is the set  $B^+_r = \left\{z \in \co: \left|z\right| > r\right\}$. 
\textit{The annulus} is $A_{r, R} = \left\{z \in \co: r <  \left|z\right| < R\right\}$. \textit{The polydisk} of radius $R = (R_1, R_2)$ centered at $\overline 0 \in \co^2$ is the set $U(R) = \left\{z \in \co^{2}:\left|z_{v}\right|<R_{v}, v=1, 2 \right\}=B_{R_1} \times B_{R_2}$. 

We use the Reinhardt domains $\{(|z_1|, |z_2|) \mid (z_1,  z_2) \in U\}$ for depicting  
  subsets of $\co^2$. For example, a polydisk in $\co^2$ centered at the origin is presented in the Reinhardt domain as a rectangle with vertices $(0, 0)$, $(R_1, R_2)$. 

For given radii $r_1 < 1 < R_1$, $r_2 < 1 < R_2$ (we will choose them later) consider the product of annuli $A = A_{r_1, R_1} \times A_{r_2, R_2}$. On the diagram it is represented as a rectangle with vertices $(r_1, r_2)$, $(R_1, R_2)$. We introduce also the domains $P^{--}  = B^-_{R_1}\times B^-_{R_2}$, $P^{+-} = B^+_{r_1}\times B^-_{R_2}$,  $P^{-+}  = B^-_{R_1}\times B^+_{r_2}$, $P^{++}  = B^+_{r_1}\times B^+_{r_2}$. The domain $P^{--}$ is a polydisk, the other domains are the direct products of a ball and of the complement to a ball. The union of four domains is the whole complex plane $\co^2$, and their  intersection is the domain $A$. 

\begin{figure}[ht]
\centering
\includegraphics[width=0.5\linewidth]{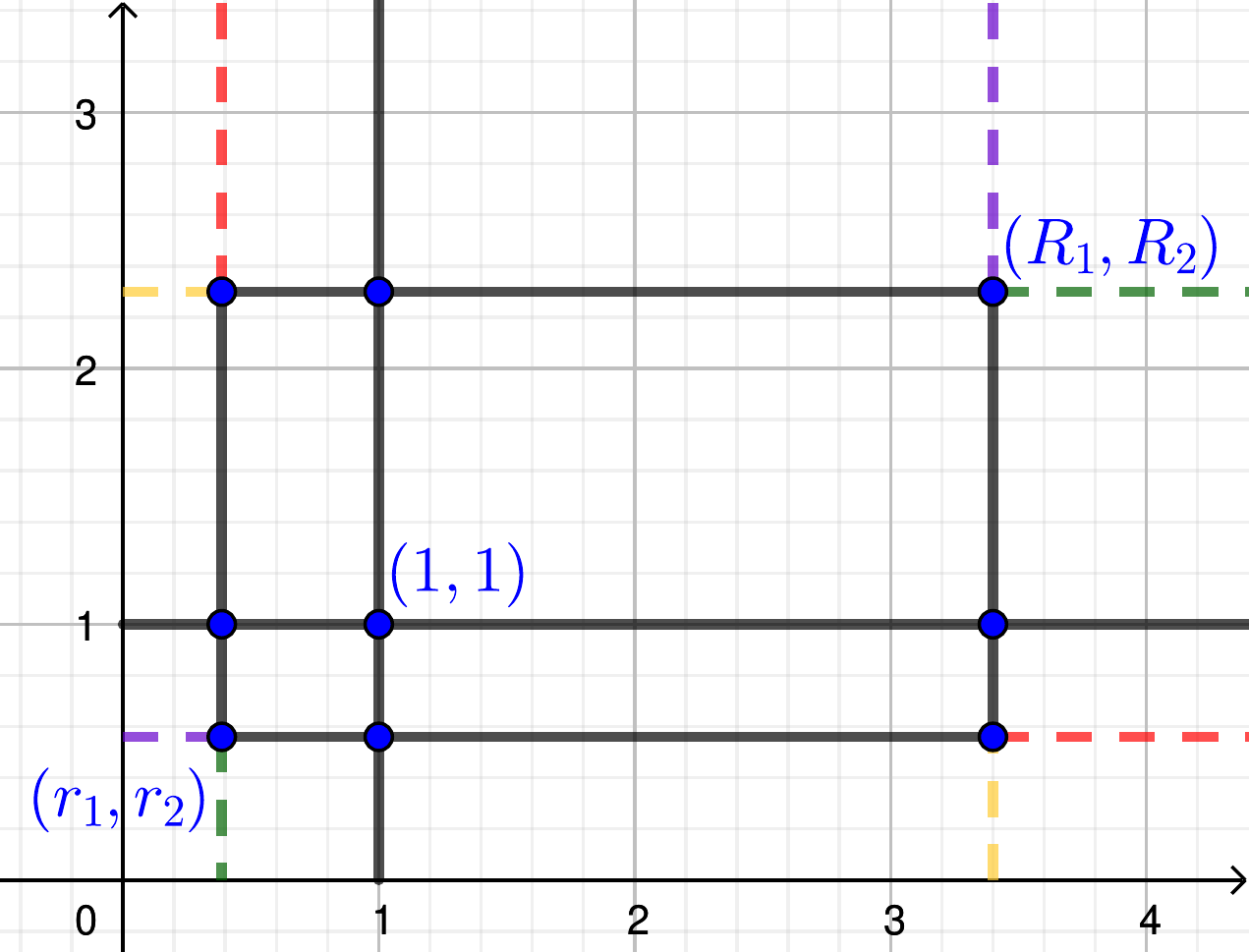}
\caption{Reinhardt domain of the subsets $P^{--}$, $P^{+-}$, $P^{-+}$, $P^{++}$}
\label{rein1}
\end{figure}

Suppose that the function $f=\frac{1}{g}$ is from $\mathscr{O}(A) \cap C(\overline{A})$, where $\mathscr{O}(A)$ denotes the set of functions holomorphic in the domain $A$, $C(\overline{A})$ is the set of functions continuous on the closure $A$. We apply the following classical theorem on the Laurent series expansion \cite{Shab}: 

\begin{mytheorem}
An arbitrary function $f(x_1, x_2) \in \mathscr{O}(A) \cap C(\overline{A})$ 
can be presented as a sum of four functions $f^{++}$, $f^{+-}$, $f^{-+}$, $f^{--}$ that are holomorphic in the domains $P^{++}$, $P^{+-}$, $P^{-+}$, $P^{--}$ respectively. 
\end{mytheorem}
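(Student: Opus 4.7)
The plan is to prove the decomposition by means of the bivariate Cauchy integral formula applied to the product of two annular Cauchy representations. First I would fix $(x_1, x_2) \in A$ and choose the four positively oriented circles $\gamma_j^{-} = \{|z_j| = r_j\}$ and $\gamma_j^{+} = \{|z_j| = R_j\}$, $j = 1, 2$. Since $f \in \mathscr{O}(A) \cap C(\overline{A})$, the standard one-variable Cauchy formula for an annulus, applied in $z_2$ with $z_1$ held fixed on one of the $\gamma_1^{\pm}$, and then in $z_1$, gives
\begin{equation*}
f(x_1, x_2) \; = \; \Bigl[\tfrac{1}{2\pi i}\!\!\oint_{\gamma_1^{+}} - \, \tfrac{1}{2\pi i}\!\!\oint_{\gamma_1^{-}}\Bigr] \Bigl[\tfrac{1}{2\pi i}\!\!\oint_{\gamma_2^{+}} - \, \tfrac{1}{2\pi i}\!\!\oint_{\gamma_2^{-}}\Bigr] \frac{f(z_1, z_2)}{(z_1 - x_1)(z_2 - x_2)}\, dz_2\, dz_1.
\end{equation*}
Expanding the product yields four double integrals, differing by signs and by which pair of contours $(\gamma_1^{\varepsilon_1}, \gamma_2^{\varepsilon_2})$ is used.

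Next I would define $f^{\varepsilon_1 \varepsilon_2}(x_1, x_2)$ to be $\pm$ the corresponding double integral, with the sign chosen so that
\begin{equation*}
f \; = \; f^{--} + f^{-+} + f^{+-} + f^{++}.
\end{equation*}
The key observation is that the integrand in each piece depends holomorphically on $(x_1, x_2)$ as long as $x_j \ne z_j$ for $z_j$ on the contour $\gamma_j^{\varepsilon_j}$. Choosing the outer contour $\gamma_j^{+}$ forces $|z_j| = R_j > |x_j|$, hence $x_j$ must lie in $B^{-}_{R_j}$; choosing the inner contour $\gamma_j^{-}$ forces $|z_j| = r_j < |x_j|$, hence $x_j$ must lie in $B^{+}_{r_j}$. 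This matches the four domains in the claim: taking $(\gamma_1^{+}, \gamma_2^{+})$ gives holomorphy in $P^{--}$, taking $(\gamma_1^{+}, \gamma_2^{-})$ gives holomorphy in $P^{-+}$, and so on. Holomorphy of each piece in the interior of the corresponding domain follows by differentiation under the integral sign, which is legitimate because $f$ is continuous on the compact product of circles and the kernel $1/((z_1-x_1)(z_2-x_2))$ is jointly smooth in $(x_1, x_2)$ away from the contour.

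The main obstacle I would anticipate is purely technical: one must justify that the iterated contour integral is well defined and equals the usual two-dimensional Cauchy integral on a torus, together with the continuous-up-to-boundary hypothesis being sufficient to perform the one-dimensional annular Cauchy expansion on each factor. Both issues are resolved by Fubini applied to the continuous function $f(z_1,z_2)/((z_1-x_1)(z_2-x_2))$ on the compact product of the two circles, combined with the classical annulus Cauchy formula applied separately in each variable. Once the representation and holomorphy of each summand are established, the decomposition $f = f^{--} + f^{-+} + f^{+-} + f^{++}$ is immediate, which is what is needed in subsequent sections in order to expand $1/\Phi(M^T\xi)$ into its Laurent series and to read off the exponential decay of the coefficients $c_k$.
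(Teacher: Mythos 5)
The paper itself gives no proof of this statement: it is quoted as a classical theorem on Laurent expansions in two variables and attributed to Shabat's textbook \cite{Shab}, so there is no in-paper argument to compare against. Your proof is essentially the standard argument behind that citation — the iterated Cauchy formula for an annulus in each variable, expansion of the product of the two contour differences into four double integrals, and holomorphy of each piece in the corresponding product domain by differentiation under the integral sign — and in outline it is correct. One technical point deserves more care than an appeal to Fubini: to apply the one-variable annulus formula in the second variable to the slice $f(z_1,\cdot)$ with $z_1$ lying on a boundary circle $|z_1|=r_1$ or $|z_1|=R_1$, you need that slice to be holomorphic in the open annulus $A_{r_2,R_2}$, which is not part of the hypothesis $f\in\mathscr{O}(A)\cap C(\overline{A})$; it follows either by noting that such a boundary slice is the uniform limit (via uniform continuity on the compact set $\overline{A}$) of the holomorphic interior slices $f(w,\cdot)$, or more simply by running the whole argument with circles of radii $r_j<\rho_j^-<|x_j|<\rho_j^+<R_j$ strictly inside the annuli and then using the independence of the four integrals under contour deformation to conclude that each piece is holomorphic on all of the corresponding domain $P^{\pm\pm}$. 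With that adjustment the decomposition $f=f^{--}+f^{-+}+f^{+-}+f^{++}$ and the stated holomorphy are exactly as you claim, which is all that the paper's subsequent coefficient estimates require.
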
 

Thus, we obtain an expansion of the function $f=\frac{1}{g}$ into four summands. Let these sumands be $f^{++}$, $f^{+-}$, $f^{-+}$, $f^{--}$. One of them, $f^{--}$ is holomorphic in the polydisk $P^{--}  = B^-_{R_1}\times B^-_{R_2}$. 
The following theorem on power series in a polydisk holds: 

\begin{mytheorem}
Let $U$ be a polydisk in $\co^2$ of radius $R = (R_1, R_2)$ centered at $\overline 0 \in \co^2$. Every function $h \in \mathscr{O}(U) \cap C(\overline{U})$ 
can be written as a multiple power series  
$$h(z)=\sum_{k_1, k_2 \ge 0} c_{k_1, k_2}z_1^{k_1}z_2^{k_2}, \quad (z_1, z_2) \in U.$$ 
\end{mytheorem}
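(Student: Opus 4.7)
The plan is to reduce the statement to the one-variable case via an iterated Cauchy integral formula on the distinguished boundary (the skeleton) of the polydisk, and then expand the Cauchy kernel as a geometric series in both variables. Since the result is about existence of a convergent power-series representation, it suffices to produce explicit integral formulas for the candidate coefficients and then verify convergence on compact subsets of $U$.

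Concretely, I would first fix an arbitrary point $z = (z_1, z_2) \in U$ and choose radii $\rho_v$ with $|z_v| < \rho_v < R_v$. Treating $h(\cdot, \zeta_2)$ as a function of one complex variable in $|\zeta_1| < R_1$ (with $\zeta_2$ fixed near the boundary), the one-variable Cauchy formula gives
\[
h(z_1,\zeta_2) \;=\; \frac{1}{2\pi i}\int_{|\zeta_1|=\rho_1}\frac{h(\zeta_1,\zeta_2)}{\zeta_1-z_1}\,d\zeta_1,
\]
and applying the same formula again in $\zeta_2$ yields the two-variable representation
\[
h(z_1,z_2)\;=\;\frac{1}{(2\pi i)^2}\int_{|\zeta_1|=\rho_1}\int_{|\zeta_2|=\rho_2}\frac{h(\zeta_1,\zeta_2)}{(\zeta_1-z_1)(\zeta_2-z_2)}\,d\zeta_1\,d\zeta_2.
\]
The continuity of $h$ on $\overline{U}$ (and thus boundedness on the skeleton) together with Fubini justifies the iteration.

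Next I would expand each factor of the Cauchy kernel as a geometric series,
\[
\frac{1}{\zeta_v-z_v}\;=\;\sum_{k_v\ge 0}\frac{z_v^{k_v}}{\zeta_v^{k_v+1}},\qquad v=1,2,
\]
which converges absolutely and uniformly for $|\zeta_v|=\rho_v$, $|z_v|\le \rho_v'<\rho_v$. Multiplying these two series and integrating term-by-term against the bounded function $h(\zeta_1,\zeta_2)$ is then legitimate and produces the expansion
\[
h(z_1,z_2)\;=\;\sum_{k_1,k_2\ge 0}c_{k_1,k_2}z_1^{k_1}z_2^{k_2},
\]
with coefficients
\[
c_{k_1,k_2}\;=\;\frac{1}{(2\pi i)^2}\int_{|\zeta_1|=\rho_1}\int_{|\zeta_2|=\rho_2}\frac{h(\zeta_1,\zeta_2)}{\zeta_1^{k_1+1}\zeta_2^{k_2+1}}\,d\zeta_1\,d\zeta_2.
\]
By Cauchy's theorem the values of $c_{k_1,k_2}$ do not depend on the choice of $\rho_1,\rho_2<R_1,R_2$, so the same series represents $h$ on all of $U$, and it converges absolutely and uniformly on every compact subset of $U$.

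The only genuine technical point is justifying the iterated Cauchy integral formula above; this is where continuity up to the boundary and holomorphy in each variable separately (Hartogs, or simply direct iteration from the univariate case) are used. Everything else—interchanging the order of integration and summation, and confirming uniform convergence—is routine once the integral formula and the uniform convergence of the geometric expansion of the kernel are in hand. The main obstacle is thus purely bookkeeping: keeping track of the admissible radii $\rho_v$ so that both the Cauchy kernel expansion and the iterated integration remain valid simultaneously.
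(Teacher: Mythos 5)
Your proof is correct: the paper does not prove this statement itself but quotes it as a classical theorem from the reference [Shab], and your argument — iterated one-variable Cauchy formula over the distinguished boundary, geometric expansion of the kernel, and term-by-term integration justified by uniform convergence — is exactly the standard proof given there. Note only that continuity on $\overline{U}$ is not actually needed for the expansion on $U$ (you integrate over circles of radii $\rho_v < R_v$); it is used in the paper for the coefficient bound of the subsequent Cauchy-estimate theorem, and the appeal to Hartogs is likewise unnecessary since $h$ is assumed holomorphic on $U$.
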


This theorem yields   
$f^{--}(x_1, x_2) = \sum \limits_{k_1, k_2 \ge 0} a_{k_1,k_2} x_1^{k_1}x_2^{k_2}$.  This series converges in the polydisk~$P^{--}$. 

The remaining functions could be reduced to holomorphic on the polydisk functions by a change of variables. For the  function $f^{-+}$, the change of variables $z_1 = x_1$, $z_2 = \frac{1}{x_2}$ gives the function $f^{-+}(z_1, z_2)$ holomorphic in a polydisk $P^{-+}(z)  = B^-_{R_1}(z_1)\times B^-_{\frac{1}{r_2}}(z_2)$, therefore, it is decomposed as $f^{-+} (z_1, z_2)= \sum \limits_{k_1, k_2 \ge 0} b_{k_1,k_2} z_1^{k_1}z_2^{k_2}$. Denote $a_{k_1, -k_2} = b_{k_1, k_2}$. Similarly, we obtain representations for the remaining two functions. 

The function ${f}(x_1, x_2)$ is then decomposed into the series  
$${f}(x_1, x_2) = \sum \limits_{k_1, k_2 \in \z} a_{k_1,k_2} x_1^{k_1}x_2^{k_2},$$
that converges in the domain $A$. 

We now estimate the coefficients $a_{k_1, k_2}$ for each of the four functions by Cauchy's formula and thus we obtain the resulting estimates for these coefficients. Let us recall the Caushy estimate for bivariate power series (see, for example, \cite{Shab}). 

\begin{mytheorem} \label{theorC}
If $h \in \mathscr{O}(U) \cap C(\overline{U})$, $|h| \le M$ in the domain $\left\{\left|z_{1}\right|=R_{1}\right\} \times \left\{\left|z_{2}\right|=R_{2}\right\}$,  
then for the coefficients of power series, we have the following inequality $$\left|c_{k_1, k_2}\right| \leqslant \frac{M}{R_1^{k_1}R_2^{k_2}}.$$ 
\end{mytheorem}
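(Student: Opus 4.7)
The plan is to derive the estimate directly from the iterated Cauchy integral formula for functions of two complex variables on the polydisk. First, I would apply the formula on the distinguished boundary (torus) $T=\{|\zeta_1|=R_1\}\times\{|\zeta_2|=R_2\}$: for every $z\in U$,
$$h(z_1,z_2)=\frac{1}{(2\pi i)^2}\int_{|\zeta_1|=R_1}\int_{|\zeta_2|=R_2}\frac{h(\zeta_1,\zeta_2)}{(\zeta_1-z_1)(\zeta_2-z_2)}\,d\zeta_1\,d\zeta_2.$$
Strictly speaking the classical version of this formula requires holomorphy on an open neighborhood of $\overline U$, so to handle the hypothesis $h\in\mathscr{O}(U)\cap C(\overline U)$ I would first apply it with slightly smaller radii $(R_1-\varepsilon,R_2-\varepsilon)$ and then let $\varepsilon\to 0^+$, invoking uniform continuity of $h$ on $\overline U$ to pass to the limit under the integral sign.

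Next, since $|z_j|<R_j=|\zeta_j|$ on $T$, each Cauchy kernel expands as a geometric series
$$\frac{1}{\zeta_j-z_j}=\sum_{k_j\ge 0}\frac{z_j^{k_j}}{\zeta_j^{k_j+1}},$$
uniformly in $\zeta\in T$ provided $z$ stays in a compact subset of $U$. Substituting the product of these two series into the integral representation above and interchanging summation with integration (justified by uniform convergence on the compact torus $T$) yields the Taylor expansion $h(z)=\sum_{k_1,k_2\ge 0}c_{k_1,k_2}z_1^{k_1}z_2^{k_2}$ with the explicit coefficient formula
$$c_{k_1,k_2}=\frac{1}{(2\pi i)^2}\int_{|\zeta_1|=R_1}\int_{|\zeta_2|=R_2}\frac{h(\zeta_1,\zeta_2)}{\zeta_1^{k_1+1}\zeta_2^{k_2+1}}\,d\zeta_1\,d\zeta_2.$$

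Finally, bounding $|h|\le M$ on $T$ and parametrizing $\zeta_j=R_je^{i\theta_j}$, $d\zeta_j=iR_je^{i\theta_j}d\theta_j$, I would estimate
$$|c_{k_1,k_2}|\le\frac{1}{(2\pi)^2}\cdot M\cdot\frac{(2\pi R_1)(2\pi R_2)}{R_1^{k_1+1}R_2^{k_2+1}}=\frac{M}{R_1^{k_1}R_2^{k_2}},$$
which is the desired inequality. The only nontrivial point in the argument is the extension of the Cauchy representation from the holomorphy hypothesis (interior) to the continuity hypothesis on the closure; everything else is a direct two-variable transcription of the classical one-dimensional Cauchy estimate, and there is no essential obstacle beyond bookkeeping.
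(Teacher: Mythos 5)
Your argument is correct: the iterated Cauchy integral formula on the distinguished boundary, the geometric-series expansion of the two Cauchy kernels to identify the coefficients $c_{k_1,k_2}$, the $\varepsilon$-shrinking of the radii to reconcile holomorphy on $U$ with mere continuity on $\overline{U}$, and the final parametrization estimate together give exactly the stated bound $|c_{k_1,k_2}|\le M R_1^{-k_1}R_2^{-k_2}$. The paper does not prove this statement at all --- it quotes it as a classical theorem with a reference to \cite{Shab} --- and your proof is precisely the standard textbook argument that the reference contains, so there is nothing to compare beyond noting that your write-up supplies the details the paper omits.
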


This theorem is applied separately for the decomposition coefficients  of $f^{++}$, $f^{+-}$, $f^{-+}$, $f^{--}$ in each of the four polydisks with variables $z_1$, $z_2$. For $k_1, k_2 \ge 0$ we obtain the estimates of the form $|a_{k_1, k_2}| \le \frac{C}{R_1^{k_1} R_2^{k_2}}$, $|a_{k_1, -k_2}| \le \frac{C}{R_1^{k_1} r_2^{-k_2}}$, $|a_{-k_1, k_2}| \le \frac{C}{r_1^{-k_1} r_2^{k_2}}$, $|a_{-k_1, -k_2}| \le \frac{C}{r_1^{-k_1} r_2^{-k_2}}$. 

The numbers $r_1, R_1, r_2, R_2$ have to be chosen so that $f=\frac{1}{g} \in \mathscr{O}(A) \cap C(\overline{A})$, 
where $A = A_{r_1, R_1} \times A_{r_2, R_2}$, i.e., the function $g$ does not have zeros in the closure of the domain $A$. For every possible choice of  $r_1, R_1, r_2, R_2$, we would obtain different estimates  on the coefficients. For simplicity, we choose $r_1 = \frac{1}{R_1} = r_2 = \frac{1}{R_2} = q$, in which case the rectangle on the Reinhardt domain in Fig. \ref{rein1} is a square with vertices on the line $y = x$ (see Fig. \ref{rein2}) and all the  estimates could be rewritten in the general form: $$|a_{k_1, k_2}| \le C q^{|k_1| + |k_2|}, k_1, k_2 \in \z. $$

\begin{figure}[ht]
\centering
\includegraphics[width=0.4\linewidth]{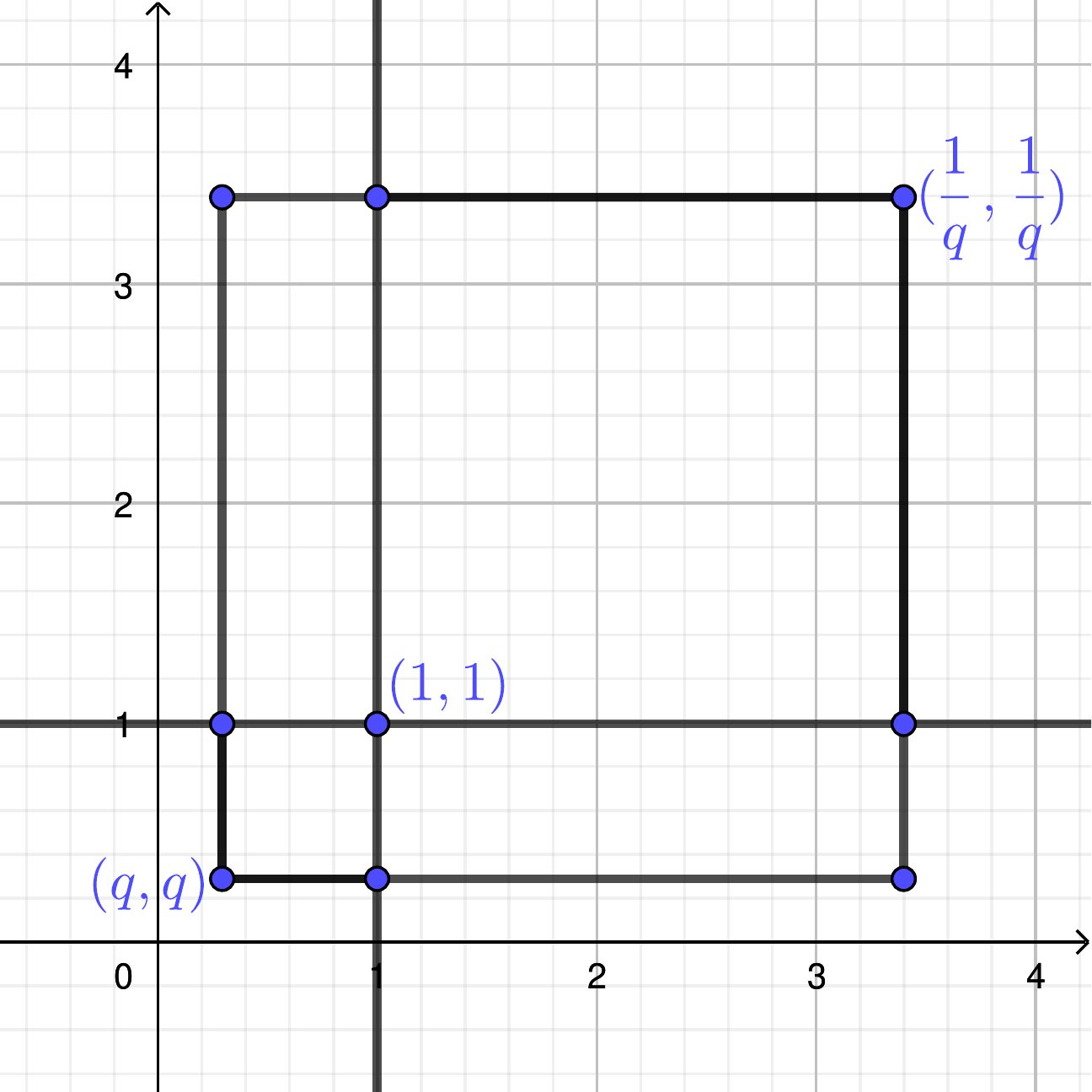}
\caption{Reinhardt domain of the subset $A(q, q) \times A(\frac{1}{q}, \frac{1}{q})$}
\label{rein2}
\end{figure}

The value $q$ is chosen so that the function $g(x_1, x_2)$ does not vanish on the closure of the domain $A(q, q) \times A(\frac{1}{q}, \frac{1}{q})$. It can always be done since the point $(1, 1)$ is not a zero of the function $g$. Hence, in view of continuity of $g$, there exists a neighborhood of the point $(1, 1)$ that does not contain zeros. 

Finally, we obtain the following theorem 

\begin{theorem}Let $f(z)=\frac{1}{g(z)}$, where $g$ is holomorphic in some domain $U \subset \co^2$ that contains the point $(1, 1)$, $g((1, 1)) \ne 0$.  
Let $q$ be such number that the domain $A = A(q, q) \times A(\frac{1}{q}, \frac{1}{q})$ is a subset of $U$ and has no zeros of $g(z)$ in its closure. Then we have in the domain $A$ 
$${f}(z_1, z_2) = \sum \limits_{k_1, k_2 \in \z} a_{k_1,k_2} z_1^{k_1}z_2^{k_2},$$
where coefficients are estimated as 
\begin{equation}\label{estpsi}
|a_{k_1, k_2}| \le C q^{|k_1| + |k_2|}, k_1, k_2 \in \z
\end{equation}
for some positive constant $C$. 
\end{theorem}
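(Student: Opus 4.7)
The plan is to assemble the conclusion directly from the three classical facts recalled just above: the bidomain Laurent decomposition, the power series expansion in a polydisk, and the Cauchy coefficient estimate. First I would verify the hypotheses. The existence of the radius $q$ follows at once from continuity of $g$ at $(1,1)$ together with $g(1,1)\neq 0$: a suitable neighbourhood of $(1,1)$ is free of zeros of $g$, and for $q<1$ sufficiently close to $1$ the compact set $\overline{A}$ fits inside this neighbourhood and inside $U$. Consequently $f=1/g$ is holomorphic on $A$ and continuous on $\overline{A}$, so $f\in\mathscr{O}(A)\cap C(\overline{A})$ as required.

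Second, I would apply the bidomain Laurent decomposition to split $f=f^{--}+f^{+-}+f^{-+}+f^{++}$, with the four summands holomorphic on $P^{--}$, $P^{+-}$, $P^{-+}$, $P^{++}$ respectively. The summand $f^{--}$ already lives on the genuine polydisk of polyradius $(1/q,1/q)$ and admits an ordinary power series expansion with non-negative exponents by the polydisk theorem. The three remaining summands are reduced to the polydisk setting by inverting one or both variables via $z_j\mapsto 1/z_j$; each such inversion sends the complement-of-ball factor $B^{+}_{q}$ onto the disk $B^{-}_{1/q}$, so after reindexing with the appropriate sign changes these pieces contribute the negative-index terms of the Laurent series of $f$.

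Third, I would apply the Cauchy estimate to each of the four summands. By continuity of $f$ (respectively of its inversions) on the compact distinguished boundary, each summand is bounded there by some finite constant $M$. The Cauchy estimate then gives $|c_{k_1,k_2}|\le M/(1/q)^{k_1}(1/q)^{k_2}=M\,q^{k_1+k_2}$ for every non-negative-index coefficient in each of the four power series. Because the symmetric choice $r_1=r_2=q$ and $R_1=R_2=1/q$ was made, the polyradius $1/q$ appears in every direction after the inversions, so reindexing these back to the ambient exponents $(k_1,k_2)\in\z^2$ turns the four separate estimates into the single uniform bound $|a_{k_1,k_2}|\le C\,q^{|k_1|+|k_2|}$, with $C$ the maximum of the four implied constants.

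I do not expect a serious obstacle; the argument is essentially the ``punch line'' of the subsection. The only point requiring care is bookkeeping the index transformations under the inversions $z_j\mapsto 1/z_j$, so that the positive-index power series coefficients produced by the polydisk and Cauchy theorems are reassembled into the correct negative-index Laurent coefficients of $f$, and so that the symmetric choice of radii indeed collapses the four direction-dependent exponents into the combined exponent $|k_1|+|k_2|$ advertised in the statement.
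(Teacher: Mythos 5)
Your proposal is correct and follows essentially the same route as the paper: existence/validity of the annular domain from $g(1,1)\neq 0$, the four-term Laurent decomposition into summands holomorphic on $P^{--}$, $P^{+-}$, $P^{-+}$, $P^{++}$, reduction of the latter three to the polydisk case via the inversions $z_j\mapsto 1/z_j$, and the Cauchy coefficient estimate with the symmetric radii $r_1=r_2=q$, $R_1=R_2=1/q$ collapsing the four bounds into $|a_{k_1,k_2}|\le C\,q^{|k_1|+|k_2|}$. No substantive differences from the paper's argument.
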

\end{subsection}

\begin{subsection}{The rate of decay of the coefficients of wavelet functions} \label{par_decreasing}
We now apply the general theorem from subsection \ref{complexx} on the coefficients of a holomorphic function to estimate the decay of the coefficients of wavelet functions build by the two-digit tiles, constructed in Theorem \ref{th_psi}. 

\begin{cor} \label{cor_dec} Let $G$ be a two-digit tile, $\varphi = B_n^G$ be the corresponding tile B-spline. We denote by $\varphi_1$ its orthogonalization, by $c_k$ the coefficients of refinement equation on $\varphi_1$. 
Let $\psi$ be the corresponding wavelet function. It is a linear combination of M-dilations of the function $\varphi_1$. For the  coefficients $a_{k_1, k_2} = \pm c_{k_1, k_2}$ of this linear combination, we have the following inequality  
\begin{equation}
|a_{k_1, k_2}| = |c_{k_1, k_2}| \le C q^{|k_1| + |k_2|}, k_1, k_2 \in \z, 
\label{estpsi}
\end{equation}
where $q$ is such that the function $\Phi(M^T \xi)$ after the change of variables $z_1 = e^{-2 \pi i (e_1, \xi)}$, $z_2 = e^{-2 \pi i (e_2, \xi)}$ has no zeros in the closure of the domain $A(q, q) \times A(\frac{1}{q}, \frac{1}{q})$, where $z = (z_1, z_2)$, $e_1 = (1, 0)$, $e_2 = (0, 1)$. 
\end{cor}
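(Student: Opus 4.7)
My plan is to view $a_1$ as a function of two complex variables via the change $z_j = e^{-2\pi i \xi_j}$ and to apply the Laurent coefficient estimate developed at the end of Subsection \ref{complexx}. Since $a_1(\xi) = \frac{1}{m}\sum_{k \in \Z^2} c_k z_1^{k_1} z_2^{k_2}$, the numbers $c_k/m$ are exactly the Laurent coefficients of $a_1(z)$ around the real torus $|z_1|=|z_2|=1$, and a bound on $|a_1|$ on a suitable polyannulus will give the claim via the multivariate Cauchy estimate of Theorem \ref{theorC}.

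Using formula \eqref{ort_mask},
\begin{equation*}
a_1(\xi) = a(\xi)\frac{\sqrt{\Phi(\xi)}}{\sqrt{\Phi(M^T\xi)}},
\end{equation*}
I would separate the factors. The original mask $a(\xi)$ has finitely many nonzero Fourier coefficients, so it is a Laurent polynomial in $z$ and contributes no obstruction to long-range decay. By Theorem \ref{th_four}, $\Phi$ is also a trigonometric polynomial in $\xi$; consequently both $\Phi(\xi)$ and $\Phi(M^T\xi) = \sum_k \Phi_k z^{Mk}$, viewed in the $z$-coordinates, are Laurent polynomials. The only potential obstructions to the holomorphy of $a_1(z)$ are thus the zeros of $\Phi(M^T\xi)$ in the denominator and the branch points of the square roots.

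Next I would verify that $a_1(z)$ extends holomorphically and boundedly to the polyannulus $\mathcal{A} = A(q,q) \times A(\tfrac{1}{q}, \tfrac{1}{q})$ for any $q < 1$ satisfying the hypothesis. Since the integer shifts of $\varphi$ form a Riesz basis (Proposition \ref{prop1}), one has $\Phi(\xi) = \sum_k |\widehat\varphi(\xi + k)|^2 \ge \delta > 0$ on the real torus, and the same for $\Phi(M^T\xi)$. By continuity of these Laurent polynomials, if $q < 1$ is sufficiently close to $1$ then the values of $\Phi(z)$ and $\Phi(M^T\xi)$ over $\overline{\mathcal{A}}$ all lie in a neighborhood of the positive real axis, where the principal branch of $\sqrt{\cdot}$ is single-valued and holomorphic. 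Consequently $a_1(z)$ is holomorphic and bounded on $\mathcal{A}$, and the four-corner Cauchy estimate of Subsection \ref{complexx} yields $|c_{k_1,k_2}| \le C q^{|k_1|+|k_2|}$ for all $(k_1,k_2) \in \Z^2$.

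The main technical point beyond the direct appeal to the abstract theorem is the existence of a single-valued holomorphic branch of $\sqrt{\Phi(\xi)/\Phi(M^T\xi)}$ on the polyannulus $\mathcal{A}$, which is not simply connected. I expect this to be the principal obstacle, and it is resolved by the continuity argument above: because each of $\Phi(z)$ and $\Phi(M^T\xi)$ maps $\overline{\mathcal{A}}$ into a contractible set inside $\co \setminus (-\infty, 0]$, the principal branch of the square root pulls back to a globally defined holomorphic function on $\mathcal{A}$, and no monodromy issue arises. Shrinking $q$ further if necessary to accommodate the hypothesis (which refers only to the denominator), this completes the plan.
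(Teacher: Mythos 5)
Your overall route is the same as the paper's: pass to $z_1=e^{-2\pi i\xi_1}$, $z_2=e^{-2\pi i\xi_2}$, use the factorization \eqref{ort_mask}, observe that $a$ and $\Phi$ become Laurent polynomials, and obtain the decay of the Laurent coefficients of $a_1$ from the four-domain decomposition and the Cauchy estimate of Theorem \ref{theorC} on $A(q,q)\times A(\frac{1}{q},\frac{1}{q})$ (the paper phrases this as an estimate of the Laurent coefficients of $\frac{1}{\Phi(M^T\xi)}$, asserting that the finitely supported factor $a$ and the numerator do not affect the rate). The genuine gap is in the one step you yourself single out as the main obstacle: the single-valued holomorphic branch of $\sqrt{\Phi(\xi)/\Phi(M^T\xi)}$ on the polyannulus. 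Your continuity argument produces such a branch only for $q$ sufficiently close to $1$, since it needs the values of $\Phi(z)$ and $\Phi(M^T\xi)$ over the whole closed polyannulus to stay near the positive real axis. But the corollary asserts the bound for the specific $q$ dictated by the zero set of $\Phi(M^T\xi)$, and this is the entire quantitative content: the paper later uses $q=0.7$ for Bear-2 and $q=0.85$ for Bear-4, far from $1$. Your closing remark about adjusting $q$ ``to accommodate the hypothesis'' concedes exactly this: as written, you prove \eqref{estpsi} with some unspecified $q'$ possibly much closer to $1$, i.e., a strictly weaker exponential rate than claimed. Note also that the hypothesis constrains only the denominator, so your assertion that $\Phi(z)$ itself maps the closed polyannulus into $\co\setminus(-\infty,0]$ is not justified at the hypothesized $q$ (the paper, too, silently discards the numerator, so your concern is legitimate, but your resolution does not reach the stated $q$).

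The missing idea that closes the gap at the stated $q$ is a monodromy (winding-number) argument rather than a smallness argument. If $g$ is a Laurent polynomial that is zero-free on the closed polyannulus $\overline{A(q,q)\times A(\frac{1}{q},\frac{1}{q})}$ and strictly positive on the unit torus, then its winding numbers along the two generating circles of the polyannulus are homotopy invariants of the zero-free region (the polyannulus retracts onto the torus) and vanish on the torus; hence $g$ admits a single-valued holomorphic logarithm, and therefore a holomorphic square root, on the whole polyannulus, with no requirement that $q$ be close to $1$. Applying this to $\Phi(M^T\xi)$ (zero-free by hypothesis) and to $\Phi(\xi)$ (whose zero-freeness on that polyannulus must be verified or added as an assumption --- it does not follow from the stated hypothesis, since $M^{-T}$ need not contract all directions, e.g.\ for the Bear matrix) gives holomorphy and boundedness of $a_1$ on the closed polyannulus with the actual $q$ of the statement, after which Theorem \ref{theorC} applied on the four domains yields \eqref{estpsi} exactly as you outline.
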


This implies the estimate on the decay of the wavelet function itself (the constant $C$ may change). 
\begin{cor} For the orthogonalized tile B-spline $\varphi_1$ and for the corresponding wavelet function $\psi$, we have 
$$
|\varphi_1(x_1, x_2)| \le C q^{|x_1| + |x_2|}, x_1, x_2 \in \R,$$ 
$$|\psi(x_1, x_2)| \le C q^{|x_1| + |x_2|}, x_1, x_2 \in \R, 
$$
where $q$ is chosen as in Corollary \ref{cor_dec}.
\end{cor}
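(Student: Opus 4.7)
The plan is to derive both decay bounds from the exponential decay of suitable mask coefficients together with the compact support of the original tile B-spline $\varphi = B_n^G$. By Theorem \ref{th_ort} one has $\varphi_1(x) = \sum_{k \in \Z^2} b_k\, \varphi(x - k)$, where the $b_k$ are the Fourier coefficients of $1/\sqrt{\Phi(\xi)}$. Applying the Laurent-series argument of Subsection \ref{complexx} to $1/\Phi(z)$ (whose zero set on the Reinhardt domain coincides with that of $1/\sqrt{\Phi(z)}$) produces, by exactly the same reasoning that gave Corollary \ref{cor_dec}, an estimate $|b_k| \le C\, q^{|k_1|+|k_2|}$ with the same value of $q$.

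For the pointwise bound on $\varphi_1$, I would exploit that $\mathrm{supp}\,\varphi$ is compact. Choose $R_0$ so that $\varphi(y) = 0$ whenever $|y_1|+|y_2| > R_0$. Then $\varphi(x-k) \neq 0$ forces $|k_1-x_1|+|k_2-x_2| \le R_0$, which gives both $|k_1|+|k_2| \ge |x_1|+|x_2| - R_0$ and $\#\{k\} = O(1)$. Combining with $\|\varphi\|_\infty < \infty$ yields
$$
|\varphi_1(x_1,x_2)| \;\le\; \sum_{|k-x|_1 \le R_0} |b_k|\,\|\varphi\|_\infty \;\le\; C' q^{|x_1|+|x_2|-R_0} \;=\; C\, q^{|x_1|+|x_2|}.
$$

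For $\psi$, I would use the formula of Theorem \ref{th_psi}, which expresses $\psi(x)$ as a series $\sum_k (\pm c_k)\, \varphi_1(Mx + k - u)$ for a fixed integer shift $u$. Inserting the bounds $|c_k| \le C q^{|k_1|+|k_2|}$ from Corollary \ref{cor_dec} and $|\varphi_1(y)| \le C q^{|y_1|+|y_2|}$ from the previous step leads to a double sum that I would split according to whether the $\ell^1$-norm of $k$ is smaller or larger than half the $\ell^1$-norm of $Mx$. In the first regime, the argument $Mx + k - u$ has $\ell^1$-norm at least $\tfrac12|Mx|_1 - |u|_1$, so $\varphi_1$ is exponentially small; in the second regime the factor $c_k$ is. In each regime the remaining free sum is a uniformly bounded geometric lattice sum. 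Finally, since $M$ is expanding one has $|Mx|_1 \ge c\,(|x_1|+|x_2|)$ for some $c > 0$, producing the desired exponential bound.

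The main obstacle will be to preserve exactly the same exponent $q$ that appears in Corollary \ref{cor_dec}. The splitting step in the bound for $\psi$ degrades $q$ by a power of $\tfrac12$, and the expansion constant of $M$ degrades it further; to recover the precise $q$ one should bypass the refinement coefficients entirely and apply the two-variable Paley--Wiener machinery of Subsection \ref{complexx} directly to $\widehat{\varphi_1}(\xi)$ and $\widehat{\psi}(\xi)$, both of which extend holomorphically to the same polydomain as $1/\Phi(M^T\xi)$ up to entire factors. In either formulation the technical heart is the same: the zero set of $\Phi$ on the distinguished boundary of the Reinhardt domain must be understood well enough to select a definite $q < 1$, after which the Cauchy estimate on a slightly shrunken polydomain delivers the exponential decay of $\varphi_1$ and $\psi$ simultaneously.
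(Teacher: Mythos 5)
Your overall route --- expand $\varphi_1=\sum_k b_k\varphi(\cdot-k)$ and use the compact support of $\varphi$ plus exponential decay of the coefficients, then feed the bound for $\varphi_1$ and Corollary \ref{cor_dec} into the formula of Theorem \ref{th_psi} for $\psi$ --- is exactly the argument the paper leaves implicit (it states the corollary with no proof beyond ``the constant $C$ may change''), and it does establish \emph{some} exponential decay of both $\varphi_1$ and $\psi$. The $\varphi_1$ half is mechanically sound: finitely many shifts contribute at each $x$, and $|k|_1\ge|x|_1-R_0$ on the support, so coefficient decay transfers to the function.

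The genuine gap is the specific exponent $q$, in both halves. For $\varphi_1$: the $b_k$ are the Fourier coefficients of $1/\sqrt{\Phi(\xi)}$, so their rate is governed by the zero-free product of annuli for $\Phi(\xi)$, while the $q$ of Corollary \ref{cor_dec} is defined through $\Phi(M^T\xi)$. Under the change of variables, $\Phi(M^T\xi)$ is obtained from $\Phi$ by the monomial substitution $z^k\mapsto z^{Mk}$, so its non-vanishing on the closure of $A(q,q)\times A(\tfrac1q,\tfrac1q)$ is equivalent to non-vanishing of $\Phi$ on the Reinhardt set whose log-moduli lie in $M^T\{\|t\|_\infty\le \log\tfrac1q\}$; this set does \emph{not} contain the original box (for $M_B^T$ the corner $(r,r)$ pulls back to $(-\tfrac r2,\tfrac{3r}2)$), so ``the same value of $q$'' for $|b_k|$ is asserted rather than proved --- you only get some $q'<1$, and you also need a holomorphic branch of $\sqrt{\Phi}$ (available once the annuli are thin enough that $\mathrm{Re}\,\Phi>0$, since $\Phi>0$ on the torus). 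For $\psi$ the loss is unavoidable in your scheme: from $|c_k|\le Cq^{|k|_1}$ and $|\varphi_1(y)|\le Cq^{|y|_1}$ the lattice sum yields at best $Cq^{\theta|Mx|_1}$ with $\theta<1$, and $\min_{|x|_1=1}|M_Bx|_1=\tfrac23$ (attained at $x\propto(2,1)$), so along that direction you obtain only roughly $Cq^{\frac23|x|_1}$, not $Cq^{|x|_1}$; your half-splitting loses even more. You flag this yourself, and your proposed remedy --- apply the Reinhardt/Cauchy machinery of Subsection \ref{complexx} (Paley--Wiener in a tube) directly to $\widehat{\varphi_1}$ and $\widehat{\psi}$ --- is indeed the cleaner way to get a definite rate, but it remains a sketch, and identifying that rate with the particular $q$ of Corollary \ref{cor_dec} still requires the comparison of the zero-free domains of $\Phi(\xi)$ and $\Phi(M^T\xi)$ described above. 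So, as written, you prove exponential decay (the substance of the corollary, and no less than the paper's own one-line justification), but not the stated inequality with that specific $q$.
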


We estimate the rate of decay of the Bear-2 and Bear-4 wavelet functions. That is, the set $G$ is the Bear tile with the matrix given by formula \eqref{twotiles}, $\varphi = B_2^G$ or $\varphi = B_4^G$. 

For Bear-4, we obtain $q = 0.85$. The approximate location of  zeros (restricted to a certain range) are given in Figs. \ref{nul1}, \ref{nul2}. The orange line is $y = x$, the green point is $(1, 1)$. For Bear-2, we can choose $q = 0.7$. 

We see that for Bear-4 the values $|c_{k_1, k_2}|$ for $|k_1| + |k_2| > 40$, are approximately $10^{-3}$ or less, and for Bear-2 it is approximately $10^{-6}$. Even for $|k_1| + |k_2| \le 40$ most of the coefficients are small. We now proceed with a more refined analysis of the coefficients. One needs to estimate how many of them have to be left to provide a good approximation of the function in $\ell_2$ or in $\ell_1$-norm.  

\begin{figure}[ht]
\centering
\includegraphics[width=0.6\linewidth]{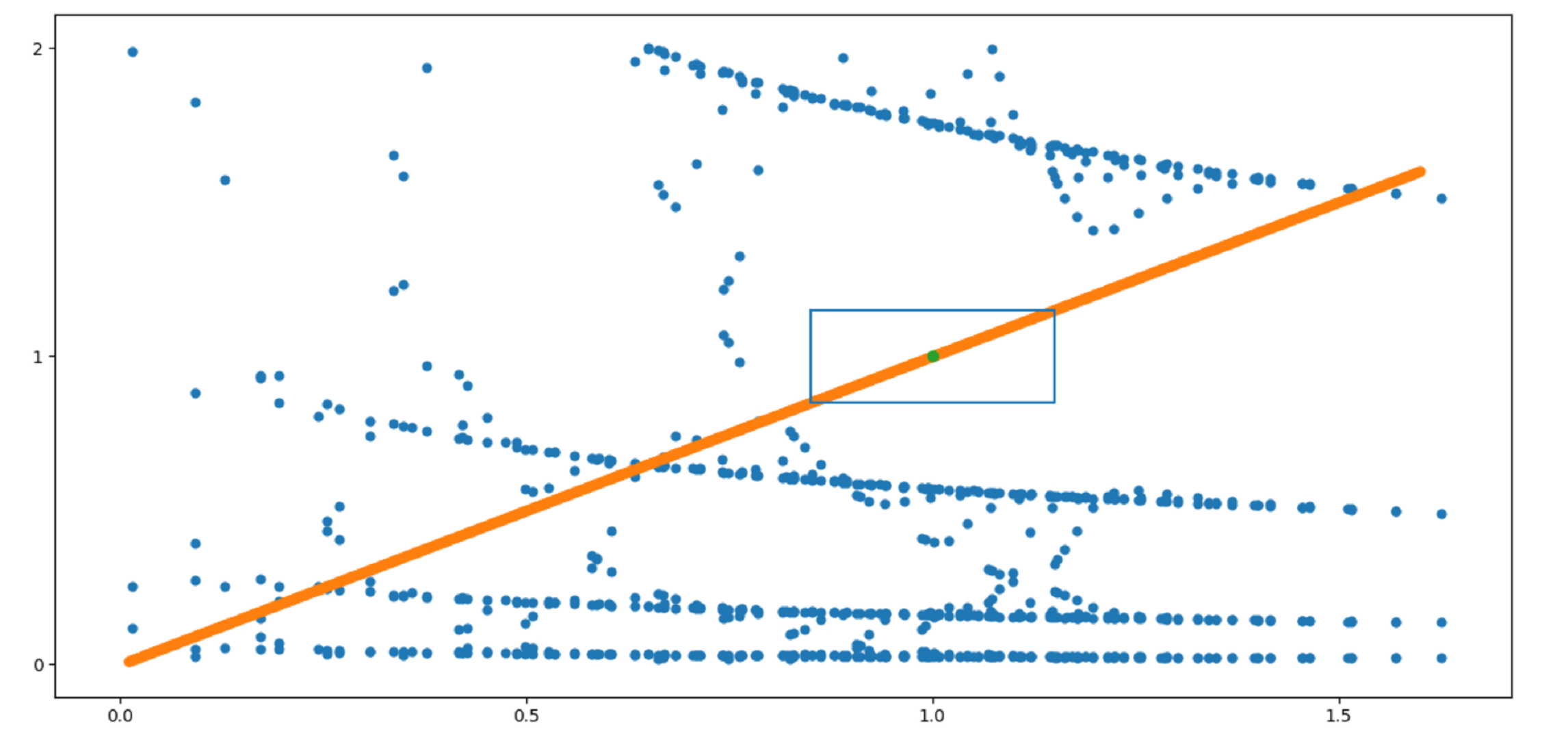}
\caption{Reinhardt domain of zeros of the function $\Phi(M_B^T \xi)$ for Bear-4 (after the change of variables to $z$)} 
\label{nul1}
\end{figure}

\begin{figure}[ht]
\centering
\includegraphics[width=0.6\linewidth]{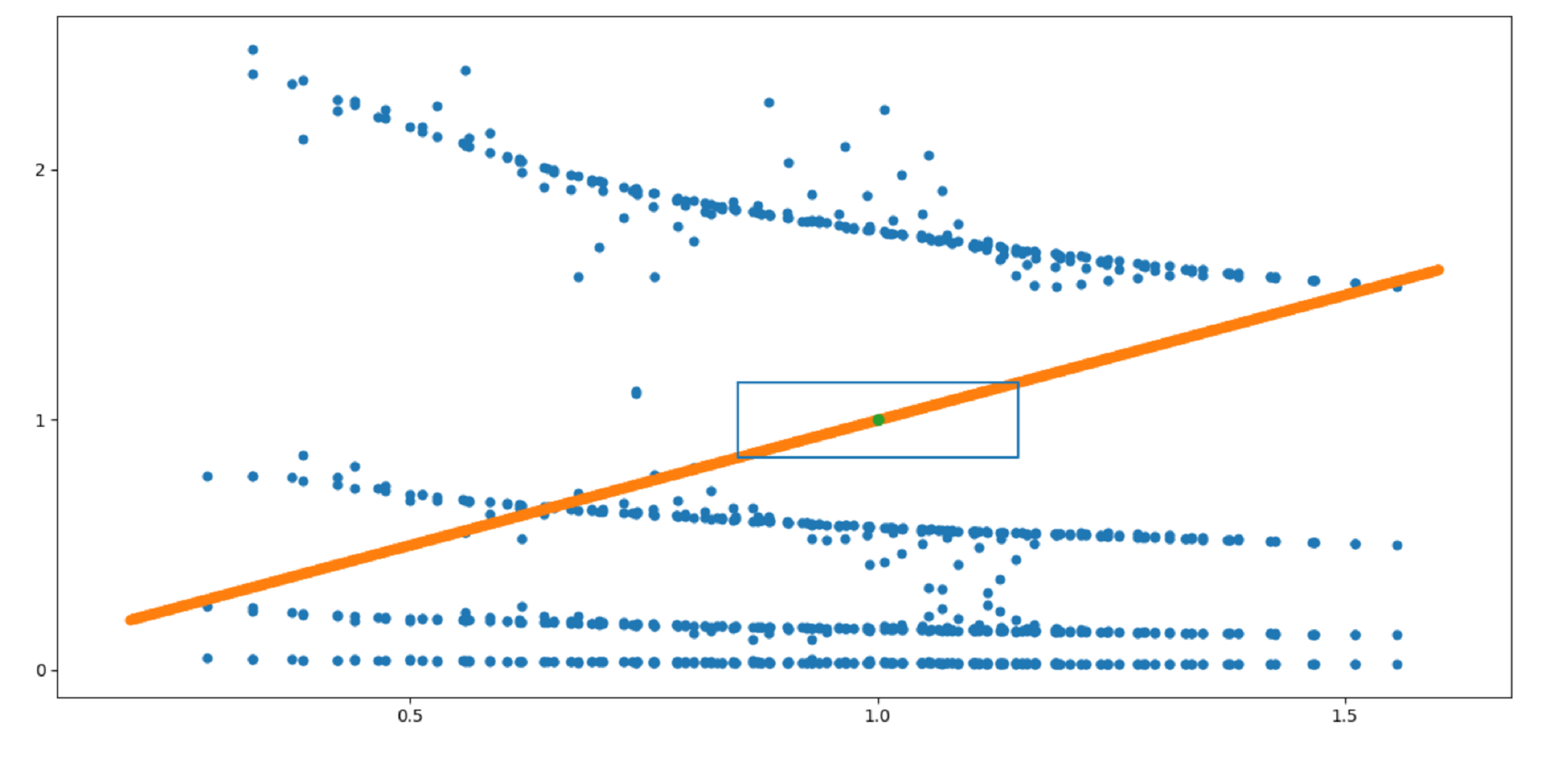}
\caption{Reinhardt domain of zeros of the function $\Phi(M_B^T \xi)$ for Bear-4 (after the change of variables to $z$): different scale. The depicted rectangle has the vertices $(q, q)$, $(\frac{1}{q}, \frac{1}{q})$.}
\label{nul2}
\end{figure}

Since we cannot store infinitely many coefficients $c_{k_1, k_2}$, we delete all those outside a large square, i.e., those satisfying $|k_1| + |k_2| > m$. How to choose $m$ so that the norm of the tail (the vector of the deleted coefficients) is small enough? 

\begin{propos} \label{tails}
For the $\ell_2$-norm of the coefficients of the tail $H_2 =\\ \sqrt{\sum_{|k_1| + |k_2| > m} c_{k_1, k_2}^2}$, 
we have   
\begin{equation}\label{est2}
H_2 \le \frac{2 C q^{m + 1}\sqrt{1 + m - mq^2}}{(1 - q^2)},
\end{equation}
and for the $\ell_1$-norm of the coefficients $H_1 = {\sum_{|k_1| + |k_2| > m} |c_{k_1, k_2}|}$, we have  
\begin{equation}\label{est1}
H_1 \le \frac{4 C q^{m + 1}(1 + m - mq)}{(1 - q)^2}, 
\end{equation}
where the parameters $q, C$ are from the inequality \eqref{estpsi}. 
\end{propos}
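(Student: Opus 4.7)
The plan is to combine the pointwise bound \eqref{estpsi} from Corollary \ref{cor_dec} with a count of lattice points on the $\ell_1$-sphere of radius $n$ in $\Z^2$, and then sum the resulting arithmetic-geometric series explicitly.

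First I would observe that for each integer $n \ge 1$ the number of lattice points $(k_1, k_2) \in \Z^2$ satisfying $|k_1| + |k_2| = n$ is exactly $4n$ (the four sides of the $\ell_1$-diamond, each contributing $n$ points when corners are counted once). Using \eqref{estpsi}, I group the tail by the value $n = |k_1| + |k_2|$:
\begin{equation*}
H_1 \;=\; \sum_{|k_1|+|k_2|>m} |c_{k_1,k_2}| \;\le\; C \sum_{n=m+1}^{\infty} 4n\, q^{n},
\qquad
H_2^2 \;\le\; C^2 \sum_{n=m+1}^{\infty} 4n\, q^{2n}.
\end{equation*}

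Next I would evaluate the shifted arithmetic-geometric series. For any $t \in (0,1)$,
\begin{equation*}
\sum_{n=m+1}^{\infty} n\, t^{n} \;=\; t^{m+1}\sum_{j=0}^{\infty}(j+m+1)t^{j} \;=\; t^{m+1}\left(\frac{t}{(1-t)^2} + \frac{m+1}{1-t}\right) \;=\; \frac{t^{m+1}(1+m-mt)}{(1-t)^2}.
\end{equation*}
Substituting $t=q$ yields
\begin{equation*}
H_1 \;\le\; 4C \cdot \frac{q^{m+1}(1+m-mq)}{(1-q)^2},
\end{equation*}
which is \eqref{est1}. Substituting $t=q^2$ yields
\begin{equation*}
H_2^2 \;\le\; 4C^2 \cdot \frac{q^{2(m+1)}(1+m-mq^2)}{(1-q^2)^2},
\end{equation*}
and taking square roots gives \eqref{est2}.

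There is no real obstacle here; the only care required is to verify the lattice-point count (which is trivial once one draws the $\ell_1$-diamond) and to be consistent in the bookkeeping of the shifted index in the arithmetic-geometric sum. Everything else is direct algebra starting from the exponential decay estimate already established in Corollary \ref{cor_dec}.
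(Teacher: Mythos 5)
Your proof is correct and follows essentially the same route as the paper: both bound the tail via the pointwise estimate \eqref{estpsi}, group coefficients by the level $|k_1|+|k_2|=n$, and evaluate the same shifted arithmetic--geometric series $\sum_{n>m} n t^n = t^{m+1}(1+m-mt)/(1-t)^2$. The only cosmetic difference is that the paper sums over a single quadrant (where the level $s$ contains $s$ points) and multiplies by the symmetry factor, whereas you count the $4n$ lattice points on the full $\ell_1$-diamond directly; the resulting constants $4$ (for $H_1$) and $2$ (for $H_2$, after the square root) are identical.
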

Thus, the norms of the tail both in $\ell_1$ and in $\ell_2$ are of order $O(q^m)$, where $q$ is smaller than one. The constants for $q^m$ for Bear-2 and Bear-4 will be estimated after the proof of Proposition \ref{tails}. 
\begin{proof}[of Proposition \ref{tails}]
Consider the case $k_1 > 0, k_2 \ge 0$, the final estimate will be two times larger. We use the inequality \eqref{estpsi}.  Then we have the following inequalities  
$$\sum \limits_{\substack{k_1 + k_2 > m \\ k_1 > 0, k_2 \ge 0}} c_{k_1, k_2}^2 \le \sum \limits_{s = m + 1}^\infty s C^2 q^{2s} \le \frac{C^2 q^{2m + 2}(1 + m - mq^2)}{(1 - q^2)^2}.$$ 
Thus,
$$H_2 \le \frac{2 C q^{m + 1}\sqrt{1 + m - mq^2}}{(1 - q^2)}$$ and the first statement is proved. 

Similarly, we estimate the $\ell_1$-norm of the coefficients of the tail.  
$$\sum \limits_{\substack{k_1 + k_2 > m \\ k_1 > 0, k_2 \ge 0}} |c_{k_1, k_2}| \le \sum \limits_{s = m + 1}^\infty s C q^{s} \le \frac{C q^{m + 1}(1 + m - mq)}{(1 - q)^2}$$ 
Then 
$$H_1 \le \frac{4 C q^{m + 1}(1 + m - mq)}{(1 - q)^2}.$$
\end{proof}

The constant $C$ could be estimated from Theorem \ref{theorC}. In what follows we suppose for simplicity that $C = 1$. The estimate of the value $q$ is illustrated in Section \ref{complexx}.

\begin{ex} (Bear-2). 
For Bear-2 we have $q= 0.7$. 
The values of the right-hand side of the estimate \eqref{est2} for  $q = 0.7$ and some $m$ are given in Table \ref{table0}. Consider $m = 22$, for that we have $H_2 \le 0.005$. 

\begin{table}[H]
\begin{center}
\begin{tabular}{c|c|c|c|c|c|c|c}
$m$ & 1 & 10 & 15 & 21 & 22 & 30 & 60 \\ \hline
$H_2 \le $ & 2.36 & 0.19 & 0.038 & 0.00525 & 0.00375 & 0.00025 & 0.00000025 \\ 
\end{tabular}
\end{center}
\caption{Estimates on $\ell_2$-norm of the tail of coefficients $H_2$ for $q = 0.7$}
\label{table0}
\end{table}
 
We choose as many as possible coefficients $c_{k_1, k_2}$ with $|k_1| + |k_2| \le m$ so that the square root of the sum of their squares is at most $0.005$. We delete these coefficients. Now the $\ell_2$-norm of the other coefficients is at most $0.01$, and we obtain a good precision. 

Numerical results show that only $65$ coefficients remain. Their location and sizes are shown in Fig. \ref{coef2}. The size of points  depends logarithmically on the corresponding coefficients. The values of the coefficients are also given in Table \ref{tablecoef2}. Only a half of them remains since in our case $c_{i, j} = c_{-i, -j}$.  

\begin{figure}[ht]
\centering
\includegraphics[width=1\linewidth]{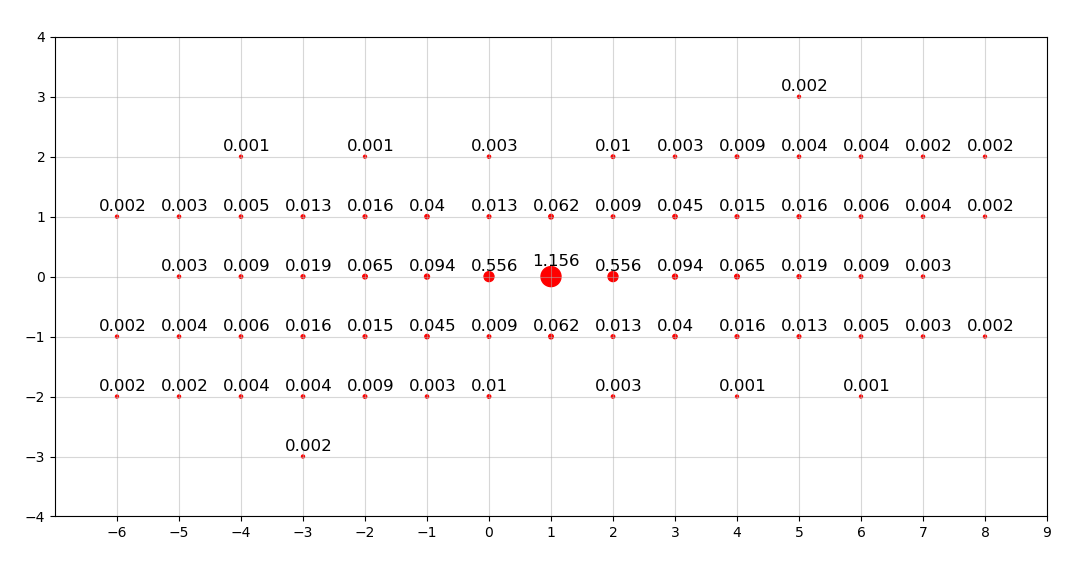}
\caption{Selected 65 coefficients that guarantee the $\ell_2$-norm of error $<0.01$}
\label{coef2}
\end{figure}

Similarly, the values of the right-hand side of the estimate \eqref{est1} for Bear-2 for $C = 1$, $q = 0.7$ are given in Table \ref{table1}. First, consider $m = 32$, for which we have $H_1 \le 0.005$. 

\begin{table}[H]
\begin{center}
\begin{tabular}{c|c|c|c|c|c|c|c}
$m$ & 1 & 10 & 20 & 30 & 32 & 45 & 60 \\ \hline
$H_1 \le  $ & 28.3 & 3.52 & 0.17 & 0.007 & 0.0036 & 0.000048 & 0.0000003 \\ 
\end{tabular}
\end{center}
\caption{Estimates on $\ell_1$-norm of tails of the coefficients $H_1$ for Bear-2}
\label{table1}
\end{table}

We again select the maximal possible number of coefficients $c_{k_1, k_2}$ from the square $|k_1| + |k_2| \le m$ with the sum of moduli at most $0.005$ and delete them. Numerical computations show that $149$ coefficients remain, they are given in Fig. \ref{coef1}. Then the $\ell_1$-norm of the deleted coefficients is at most $0.01$. The values of the coefficients are given in Table \ref{tablecoef2_1} in Appendix.

\begin{figure}[ht]
\centering
\includegraphics[width=1\linewidth]{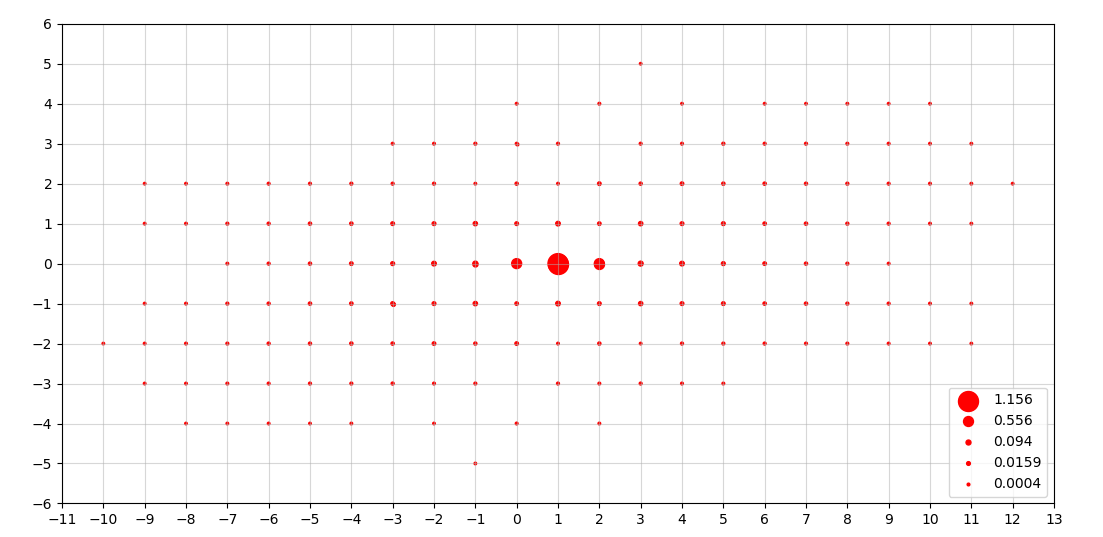}
\caption{Chosen 149 coefficients  that provide error in $\ell_1$-norm $<0.01$}
\label{coef1}
\end{figure}

\begin{table}[H]
\small
\begin{center}
\begin{tabular}{c|c|c|c|c|c|c|c|c}
$i$ & 1 & 0 & 3 & 4 & 1 & 3 & 3 & -3\\ \hline
$j$ & 0 & 0 & 0 & 0 & 1 & 1 & -1 & 0\\ \hline
\!\!\!\!\! $c_{i,j}$ \!\!\!\! & 1.15586 & 0.5563 & -0.09441 & -0.06459 & 0.06225 & -0.04478 & -0.0398 & 0.0191 \!\!\!\! \\ \hline \hline

$i$  & 5 & 4 & 4 & 2 & 5 & 0 & -4 & 4\\ \hline
$j$  & 1 & -1 & 1 & -1 & -1 & -2 & 0 & 2 \\ \hline
\!\!\!\!\! $c_{i,j}$ \!\!\!\!  & 0.01591 & 0.01557 & 0.01535 & -0.01304 & 0.01256 & -0.00979 & 0.00935 & 0.00911 \!\!\!\! \\ \hline \hline

$i$  & 2 & -4 & -4 & 6 & -5 & 5 & -5 & 2\\ \hline
$j$  & 1 & -1 & 1 & 2 & -1 & 2 & 1 & -2 \\ \hline
\!\!\!\!\! $c_{i,j}$ \!\!\!\!  & -0.00862 & -0.00644 & -0.00543 & -0.00430 & -0.00418 & -0.0041 & -0.00350 & 0.00340
 \!\!\!\! \\  \hline \hline
\end{tabular}
\begin{tabular}{c|c|c|c|c|c|c|c|c|c}
$i$  & 3 & -5 & -5 & -6 & 5 & -6 & 8 & 6 & 4\\ \hline
$j$  & 2 & 0 & -2 & -1 & 3 & -2 & -1 & -2 & -2\\ \hline
\!\!\!\!\! $c_{i,j}$ \!\!\!\!  & 0.0031 & -0.0029 & 0.0021 & 0.0017 & -0.0016 & 0.0015 & 0.0015 & -0.0012 & 0.0012\!\!\!\! \\ 

\end{tabular}
\end{center}
\caption{Coefficients of Bear-2 for approximation in $\ell_2$}
\label{tablecoef2}
\end{table}

\end{ex}

\begin{ex} (Bear-4). 
For Bear-4 we have $q = 0.85$. The Table \ref{table2} has the values of the right-hand side of the estimate \eqref{est2} for $q = 0.85$ and some $m$. Consider $m = 53$ with that for Bear-4 we have $H_2 \le 0.005$. We choose again as much of the coefficients $c_{k_1, k_2}$ ($|k_1| + |k_2| \le m$) so that the square root of the sum of their squares is at most $0.005$, and we delete them. Then the $\ell_2$-norm of the remained coefficients is at most $0.01$. Their values are given in Table \ref{tablecoef4_2} in Appendix.  

 \begin{table}[H]
\begin{center}
\begin{tabular}{c|c|c|c|c|c|c|c}
$m$ & 1 & 10 & 20 & 30 & 40 & 53 & 60 \\ \hline
$H_2 \le $ & 5.89 & 2.34 & 0.61 & 0.14 & 0.03 & 0.0044 & 0.0015 \\ 
\end{tabular}
\end{center}
\caption{Estimates on $\ell_2$-norm of tails of the coefficients $H_2$ for Bear-4}
\label{table2}
\end{table}
\end{ex}
\end{subsection}
\end{section}

\begin{section}{The regularity of the tile B-splines} \label{par_smooth}
The regularity is one of the most important parameters of refinable functions and of the corresponding wavelet systems. For wavelets, regularity implies good approximation properties and fast decay of  the coefficients of wavelet decompositions \cite{NPS, Woj}.  
In some applications, the regularity is crucial, for example, in the wavelet-Galerkin method. For the subdivision schemes, the regularity of the limit function defines both the quality of the limit surface and the rate of convergence of the algorithm \cite{CDM}. 

For the classical piecewise-polynomial splines regularity is defined by their order, which is not the case for tile B-splines. Applying the method developed in recent article \cite{CP} we compute the exact H{\" o}lder exponents for the tile B-splines of small orders.

\begin{definition} \textit{The general H{\" o}lder regularity of a function $\varphi$ in the space $C$} is the number $$\alpha_{\varphi} = k + \sup \left\{\alpha \geq 0 \, : \, \|\varphi^{(k)}(\cdot + h) - \varphi^{(k)}\|_C \leq C \|h\|^{\alpha}, \forall h \in \R^d\right\},$$ where $k$ is the maximal integer such that $\varphi \in C^k(\R^d)$. 
\end{definition}
If $\varphi \in C^{\infty}$, then we define $\alpha_{\varphi} = +\infty$. 

Similarly, the H{\" o}lder regularity in $L_2$ is defined by replacing of $C^k(\R^d)$ with $W_2^k(\R^d)$. 

It is known that  the value of H{\" o}lder regularity of a refinable function is defined by the so-called joint spectral radius (in case of $L_2$-regularity it is $L_2$-radius). It is defined as follows: 

\begin{definition} For linear operators $A_0, A_1$, their \textit{joint spectral radius} is the number  
$$\rho_{C}(A_0, A_1) = \lim \limits_{s \to \infty} \max \limits_{\sigma} \|A_{\sigma(1)}\ldots A_{\sigma(s)}\|^{1/s}, \, \sigma\colon \{1, \ldots, s\} \to \{0, 1\}.$$ 
\end{definition}

\begin{definition} For linear operators $A_0, A_1$ their \textit{$L_2$-radius} is the number $$\rho_{2}(A_0, A_1) = \lim \limits_{m \to \infty} \left(\frac{1}{2^s} \sum \limits_{\sigma} \|A_{\sigma(1)}\ldots A_{\sigma(s)}\|^2\right)^{1/2s}.$$
\end{definition}

Suppose we have a refinement equation with the finite number of summands. 
Consider the set 
$$K = \{x \in \R^d \, \, \colon \, \, x = \sum \limits_{j = 1}^{\infty} M^{-j} \gamma_j\}.$$%, \quad \gamma_j \in \supp(c)\}.$$
Then we choose an arbitrary set of digits $D(M)$ for the dilation matrix $M$ that generates a tile $G_0$. We call it a \textit{basis tile}.  
In the case of tile B-splines one can take the corresponding generating tile as a basis tile. 

\begin{definition}The set $\Omega \subset \Z^d$ is the minimal set of integer vectors such that $K \subset \Omega + G_0 = \bigcup \limits_{k \in \Omega} {(k + G_0)}$. 
\end{definition}
This set can be found using the algorithm from \cite{CM}. In the univariate case for $M = 2$, $D(M) = \{0, 1\}$, the basis tile $G_0 = [0, 1]$ is the unit segment. If a refinement equation is given by the coefficients $c_0, \ldots, c_N$, then $K = [0, N]$, $\Omega = \{0, 1, 2, \ldots, N - 1\}$. 

If the basis tile is fixed, then we can define \textit{transition matrices} $(T_d)_{a,b} = c_{Ma - b + \Delta}$ for all $a, b \in \Omega, \Delta \in D(M)$. 

Using these matrices we can find H{\" o}lder regularity of a refinable function $\varphi$ (see \cite{CP}). It is expressed in terms of the joint spectral characteristics of matrices restricted to a certain common invariant subspace, and in most cases this space is $W=\left\{x \in \mathbb{R}^{N} \mid \sum_{k} x_{k}=0\right\}$. Namely, if the H{\" o}lder regularity is at most one and the integer shifts of the function $\varphi$ are linearly independant (see Section \ref{par_subdiv} for details), then we have   
$$\alpha_{\varphi} = -\log_{\rho(M)}(\rho_{C}(T_0|_W, T_1|_W)),$$
$$\alpha_{\varphi, 2} = -\log_{2}(\rho_2(T_0|_W, T_1|_W)),$$
where $\rho(M)$ denotes the spectral radius of a matrix. 
In the general case, if there are no such constraints on the H{\" o}lder regularity, it is computed by similar formulas:  
\begin{equation} \label{holder_c}
\alpha_{\varphi} = -\log_{\rho(M)}(\rho_{C}(T_0|_{W_k}, T_1|_{W_k})),
\end{equation}
\begin{equation} \label{holder_l2}
\alpha_{\varphi, 2} = -\log_{2}(\rho_{2}(T_0|_{W_k}, T_1|_{W_k})),
\end{equation}
where $W_k$ is the space of vectors from $\R^d$ orthogonal to the space of polynomials of $d$ variables with degree at most $k$; the number $k$ in formulas \eqref{holder_c}, \eqref{holder_l2} is the maximal number such that the space $W_k$ is invariant with respect to matrices $T_0$, $T_1$. 

\begin{remark}
There is an explicit formula for the $L_2$-radius of $n \times n$ matrices $A_0$, $A_1$ expressing it in terms of maximal eigenvalue of linear operator $\mathscr{A}X = \frac{1}{2}({A_{0}^T X A_0 + A_{1}^T X A_1})$ 
which acts on the space of symmetric $n \times n$ matrices $X$. We have $$\rho_2 = \sqrt{\lambda_{\max}(\mathscr{A})}.$$  
Since the operator has an invariant cone (the cone of positive definite matrices), the largest eigenvalue $\lambda_{max}(\mathscr{A})$ 
is nonnegative by the Krein-Rutman theorem \cite{KR}. The matrix of the operator $\mathscr{A}$ 
is given by the  formula $$\frac{1}{2}(A_0 \otimes A_0 + A_1 \otimes A_1),$$ where $\otimes$ denotes the Kronecker product of matrices \cite{P97, BN}. Thus, the computation of the $L_2$-radius is reduced to the computation of the leading eigenvalue of the linear operator in dimension $\frac{n^2 + n}{2}$. 

Most likely, an explicit formula for the joint spectral radius does not exist. Moreover, it is known that the problem of its computation for general matrices with rational coefficients is undecidable  
and for boolean matrices it is NP-complete \cite{BT}. Nevertheless, in most cases, in relatively small dimensions (up to 25), it is possible to find the exact value of the joint spectral radius by the so-called invariant polytope algorithm \cite{GP}. We apply the upgraded version of this algorithm presented in \cite{TM}. 
\end{remark}

\begin{table}

\begin{center}
\begin{tabular}{|c|c|c|c|c|c|}\hline

\text{B-spline} & $B_0$ & $B_1$  & $B_2$ & $B_3$ & $B_4$ \\ \hline
\text{Square} & 0.5 & 1.5 & 2.5 & 3.5 & 4.5 \\ \hline
\text{Dragon} & 0.2382 & 1.0962 & 1.8039 & 2.4395 & 3.0557 \\ \hline
\text{Bear} & 0.3946 & {1.5372} & {2.6323} & {3.7092} & {4.7668} \\ \hline
\end{tabular}
\caption{The $L_2$-regularity of two-digit tile B-splines.}
\label{tab1}

\end{center}
\end{table}

\begin{table}

\begin{center}
\begin{tabular}{|c|c|c|c|c|}\hline
\text{B-spline} & $B_0$ & $B_1$  & $B_2$ & $B_3$\\ \hline
\text{Square} & 0 & 1 & 2 & 3 \\ \hline
\text{Dragon} & 0 & 0.47637 & 1.5584 & 2.1924 \\ \hline
\text{Bear} & 0 & 0.7892 & {2.2349} & {3.0744} \\ \hline
\end{tabular}
\caption{The regularity in $C$ of two-digit tile B-splines.}
\label{tab2}

\end{center}
\end{table}

The values of regularity of tile B-splines up to order 4 are given in Tables \ref{tab1}, \ref{tab2}. They lead us to the following theorem. 

\begin{theorem} \label{th_smooth}
Tile B-splines Bear-3 and Bear-4 are $C^2(\R^2)$ and $C^3(\R^2)$ respectively. 
\end{theorem}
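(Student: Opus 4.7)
The plan is to apply the transition-matrix machinery recalled just above the theorem statement. For the Bear tile, the matrix $M_B$ from \eqref{twotiles} is isotropic with $|\det M_B| = 2$, so both eigenvalues have modulus $\sqrt{2}$ and $\rho(M_B) = \sqrt{2}$. Hence, by formula \eqref{holder_c}, in order to conclude that Bear-$(n{+}1) = B_n^{G_B}$ lies in $C^n(\R^2)$, it suffices to verify that the joint spectral radius on the appropriate invariant subspace satisfies $\rho_C(T_0|_{W_k}, T_1|_{W_k}) < (\sqrt{2})^{-n}$, i.e.\ $<1/2$ for $n=2$ and $<\sqrt{2}/4$ for $n=3$.

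The first step is to write down, by Proposition \ref{prop_mu} (equivalently formula \eqref{eq_two} specialised to $M=M_B$), the $(n{+}2)$ refinement coefficients $C_0,\ldots,C_{n+1}$ of $B_n^{G_B}$: they are $2^{-n}\binom{n+1}{k}$, supported at the lattice points $(k,0)^T$ for $k=0,\ldots,n+1$. From these coefficients one assembles the two transition matrices $T_0,T_1$ of size $|\Omega|\times|\Omega|$ by the rule $(T_\Delta)_{a,b}=c_{M_B a-b+\Delta}$ for $\Delta\in D(M_B)=\{0,e_1\}$, using the Bear tile itself as the basis tile. The set $\Omega$ can be produced algorithmically (as referenced from \cite{CM}) and its size determines the dimension of the ambient space.

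Next, I identify the common invariant subspace $W_k$. By Proposition \ref{prop_poly}, integer translates of $B_n^{G_B}$ reproduce all polynomials of degree at most $n$; the standard duality (see the discussion around \eqref{holder_c}) then provides an invariant subspace of codimension equal to $\binom{n+d}{d}$, on which $T_0,T_1$ act with their ``smoothness-relevant'' spectrum. So for $n=2$ I work on $W_2$ and for $n=3$ on $W_3$. On these subspaces I compute $\rho_C(T_0|_{W_k},T_1|_{W_k})$ exactly by the invariant polytope algorithm of Guglielmi--Protasov as upgraded in \cite{TM}: the algorithm produces a leading product $T_{\sigma(1)}\cdots T_{\sigma(s)}$ whose spectral radius matches the norm $\max_\sigma \|T_{\sigma(1)}\cdots T_{\sigma(s)}\|^{1/s}$ realised by an explicitly constructed extremal polytope norm. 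Plugging the resulting value into \eqref{holder_c} yields the Hölder exponents reported in Table \ref{tab2}, namely $\alpha\approx 2.2349 > 2$ for Bear-3 and $\alpha\approx 3.0744 > 3$ for Bear-4, which is the required conclusion.

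The hard step is the joint spectral radius computation itself: general computation of $\rho_C$ is intractable (undecidable over $\mathbb Q$, NP-hard for boolean matrices, as recalled in the paper), so the whole argument hinges on the invariant polytope algorithm terminating in finite time with a certified extremal product for these particular matrix pairs. A secondary subtlety is confirming that $W_k$ with the chosen $k$ is really the maximal $T_0,T_1$-invariant subspace of the correct type; for $n=2,3$ this should be verified directly on the computed matrices, since (as the authors note) linear independence of integer shifts of $B_n^{G_B}$ must also be checked in order to apply the regularity formula in the form \eqref{holder_c}. Once termination of the algorithm certifies the numerical values, the strict inequalities $2.2349>2$ and $3.0744>3$ finish the proof.
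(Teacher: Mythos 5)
Your proposal is correct and follows essentially the same route as the paper: the theorem rests on the exact Hölder exponents in Table \ref{tab2} (Bear-3: $2.2349>2$, Bear-4: $3.0744>3$), obtained from the transition matrices via formula \eqref{holder_c} with $\rho(M_B)=\sqrt{2}$, the restriction to the invariant subspace $W_k$, stability from Proposition \ref{pr_stab}, and the exact joint spectral radius computation by the invariant polytope algorithm of \cite{GP,TM}. Your threshold reformulation ($\rho_C<1/2$ for Bear-3, $\rho_C<\sqrt{2}/4$ for Bear-4) is a correct equivalent of the same criterion.
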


This is well known that the classical bivariate B-splines of corresponding orders are not from $C^2$ ($C^3$ respectively). 

\begin{remark}
The statement of the theorem could seem paradoxical since the regularity of fractal B-spline turns out to be higher than the regularity of a rectangular one. Possibly it is related to the fact that the cube has plane faces that are slowly smoothed with autoconvolutions, and in tiles with fractal structure the smoothing is faster. Fig. \ref{pic_auto} illustrates the difference between the autoconvolution of indicators of a square and a disc, in case of a disc the area of intersection of $\varphi(x), \varphi(x + h)$ from formula for H\"older regularity decays much faster.     
\begin{figure}[ht!]
\begin{minipage}[h]{0.4\linewidth}
\center{\includegraphics[width=1\linewidth]{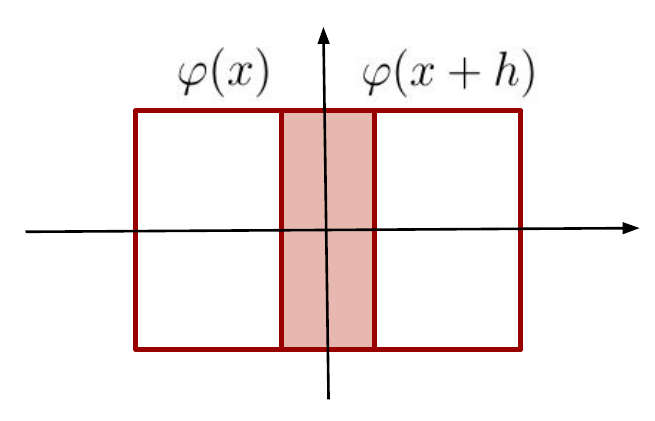}} $[\varphi * \varphi](h) \sim |h|$ \\
\end{minipage}
\quad
\begin{minipage}[h]{0.4\linewidth}
\center{\includegraphics[width=1\linewidth]{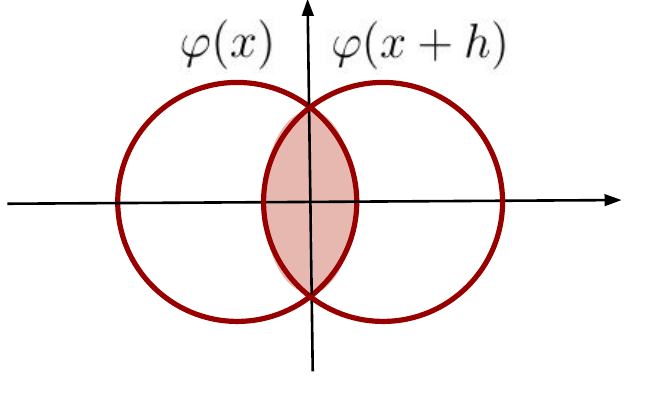}}
$[\varphi * \varphi](h) \sim |h|^{\frac{3}{2}}$ \\
\end{minipage}
\caption{The comparison of autoconvolutions of a square and of a disc.}
\label{pic_auto}
\end{figure}

Similar phenomenon was observed by P. Oswald in his investigations of subdivision schemes \cite{O, OS, JO}. Note also that in one-dimensional case among all refinement equations with a given number of coefficients the B-spline has the maximum regularity of a solution \cite{CDM}. As we see, Bears possess the maximal smoothness for functions of two variables, four and five coefficients. For B-splines of higher order we cannot calculate the regularity because of large amount of calculations of the spectral radius. 
\end{remark}

The figures \ref{riss:0} -- \ref{riss:8} contain the graphs of partial derivatives of order one, two and three for Bear-4. 

\begin{figure}[h]
\begin{center}
\begin{minipage}[h]{0.3\linewidth}
\includegraphics[width=1\linewidth]{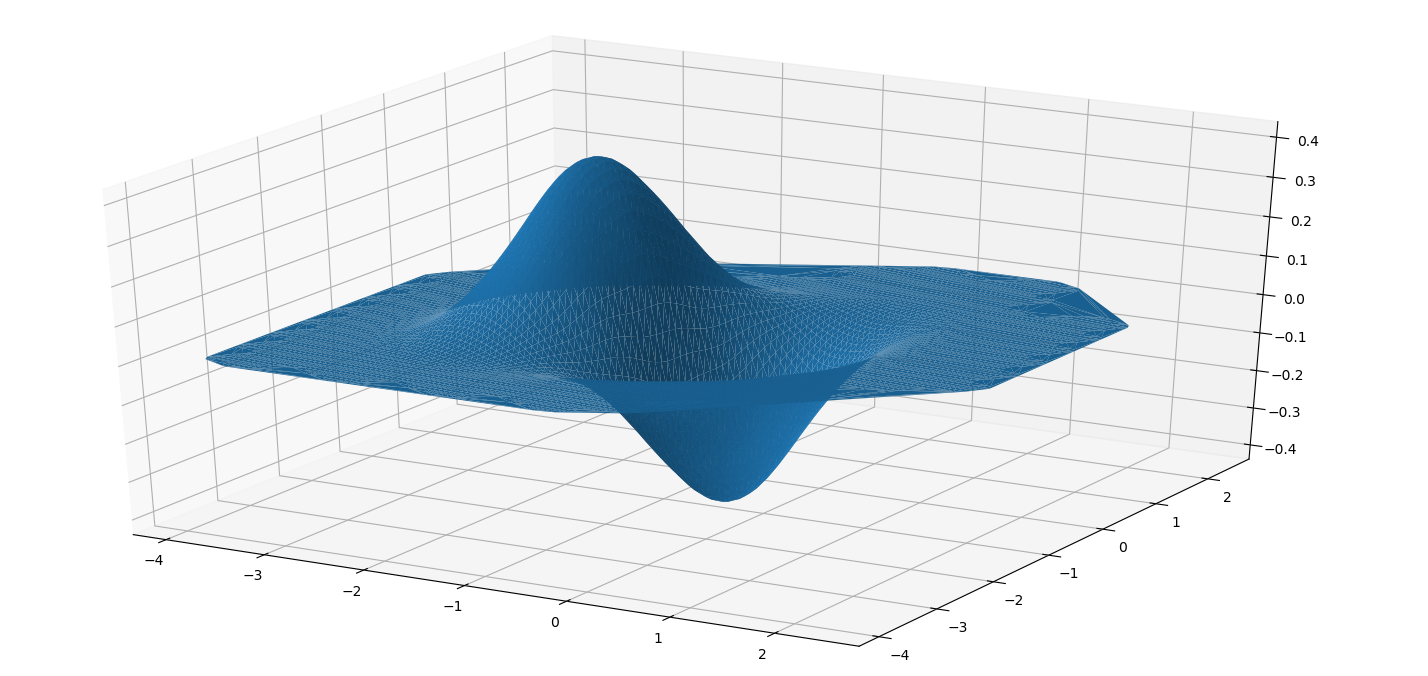}
\caption{The partial derivative of Bear-4 in $x$.}
\label{riss:0}
\end{minipage}
\hfill
\begin{minipage}[h]{0.3\linewidth}
\includegraphics[width=1\linewidth]{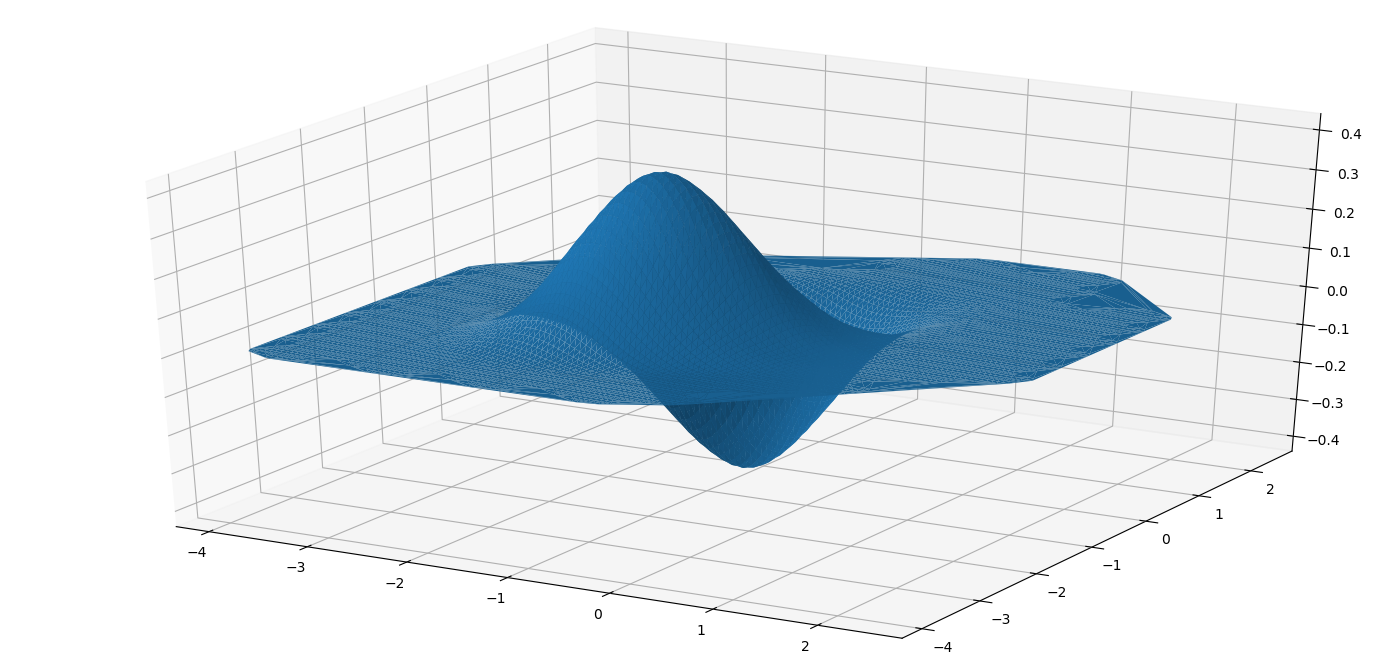}
\caption{The partial derivative of Bear-4 in $y$.}
\label{riss:1}
\end{minipage}
\hfill
\begin{minipage}[h]{0.3\linewidth}
\includegraphics[width=1\linewidth]{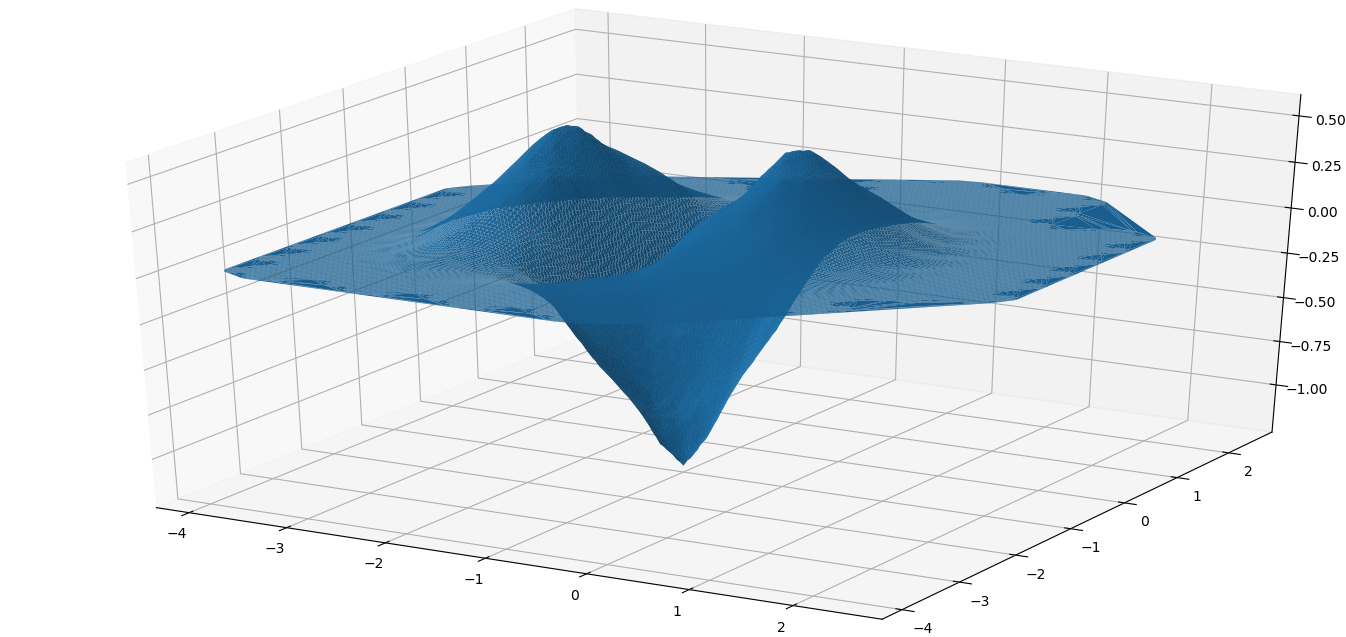}
\caption{The partial derivative of Bear-4 in $x, x$.}
\label{riss:2}
\end{minipage}
\vfill
\begin{minipage}[h]{0.3\linewidth}
\includegraphics[width=1\linewidth]{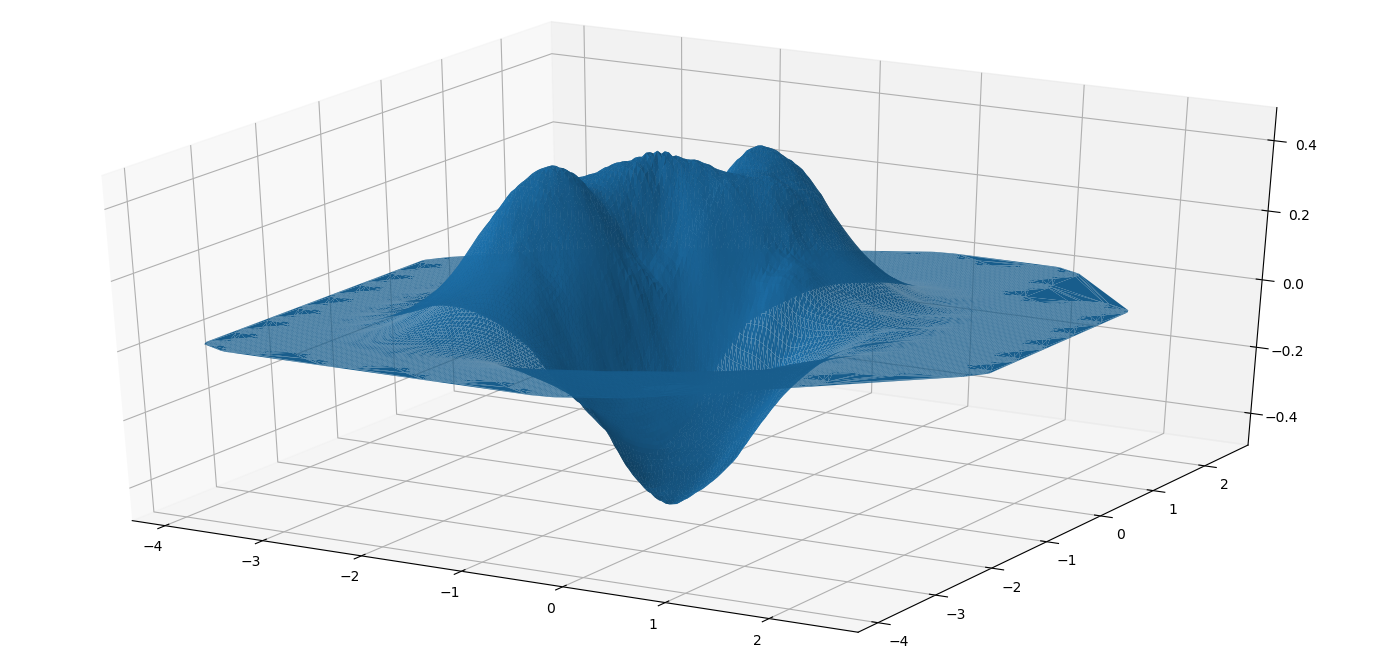}
\caption{The partial derivative of Bear-4 in $x, y$.}
\label{riss:3}
\end{minipage}
\hfill
\begin{minipage}[h]{0.3\linewidth}
\includegraphics[width=1\linewidth]{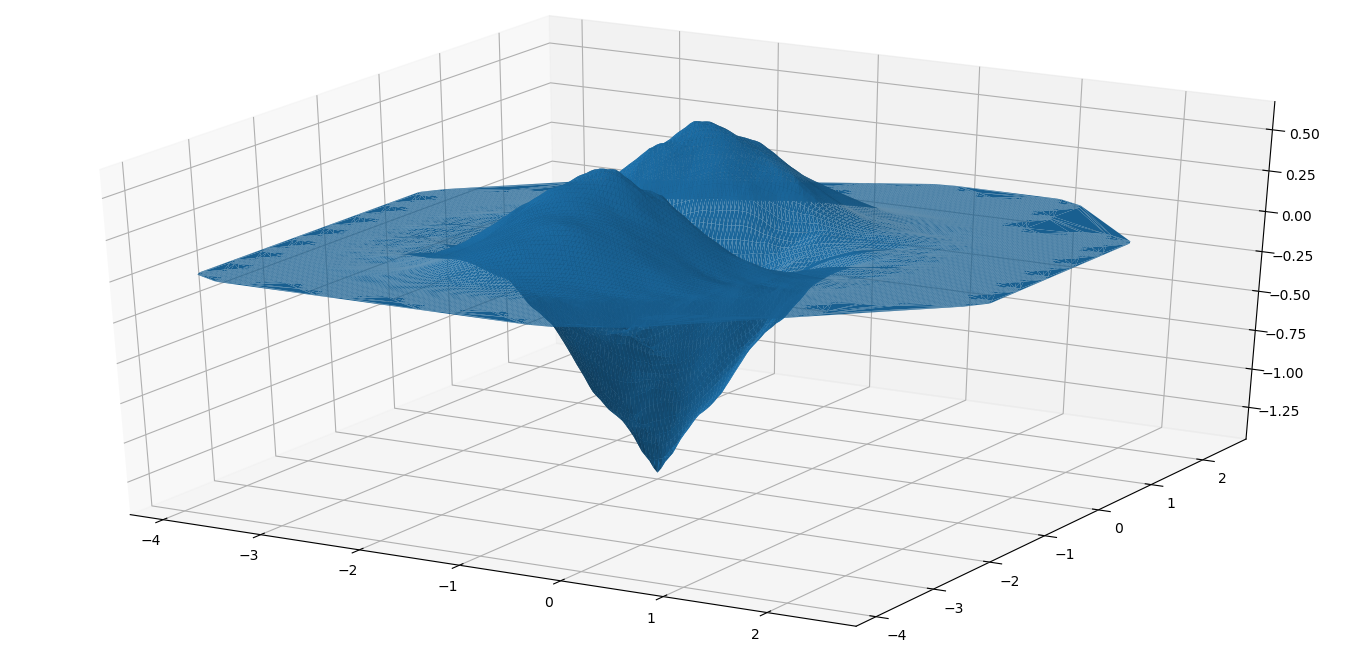}
\caption{The partial derivative of Bear-4 in $y, y$.}
\label{riss:4}
\end{minipage}
\hfill
\begin{minipage}[h]{0.3\linewidth}
\includegraphics[width=1\linewidth]{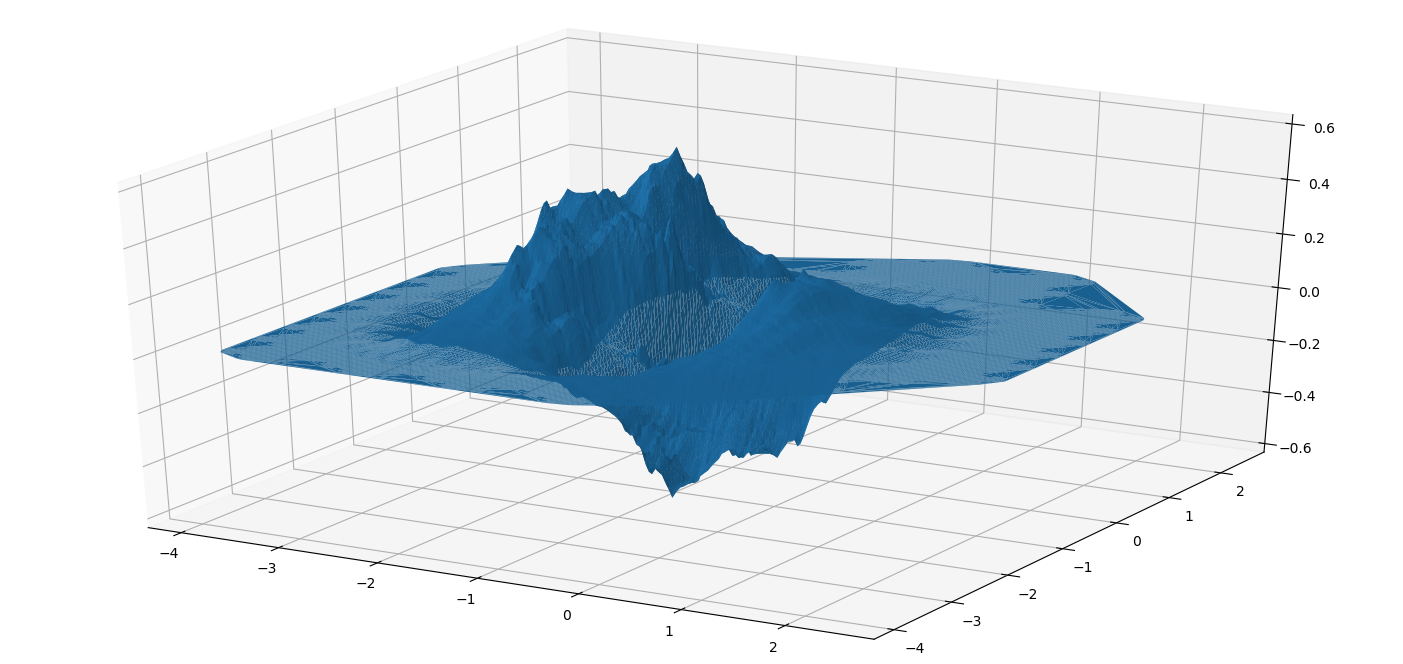}
\caption{The partial derivative of Bear-4 in $x, x, x$.}
\label{riss:5}
\end{minipage}
\vfill
\begin{minipage}[h]{0.3\linewidth}
\includegraphics[width=1\linewidth]{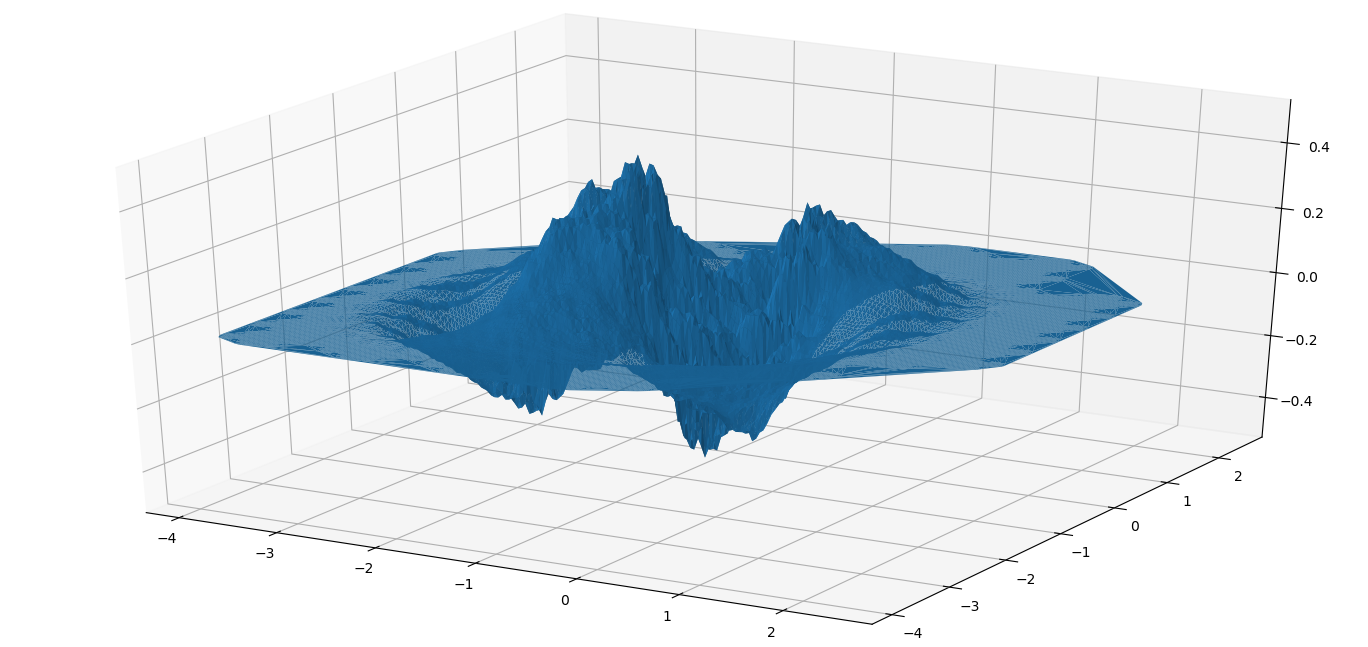}
\caption{The partial derivative of Bear-4 in $x, x, y$.}
\label{riss:6}
\end{minipage}
\hfill
\begin{minipage}[h]{0.3\linewidth}
\includegraphics[width=1\linewidth]{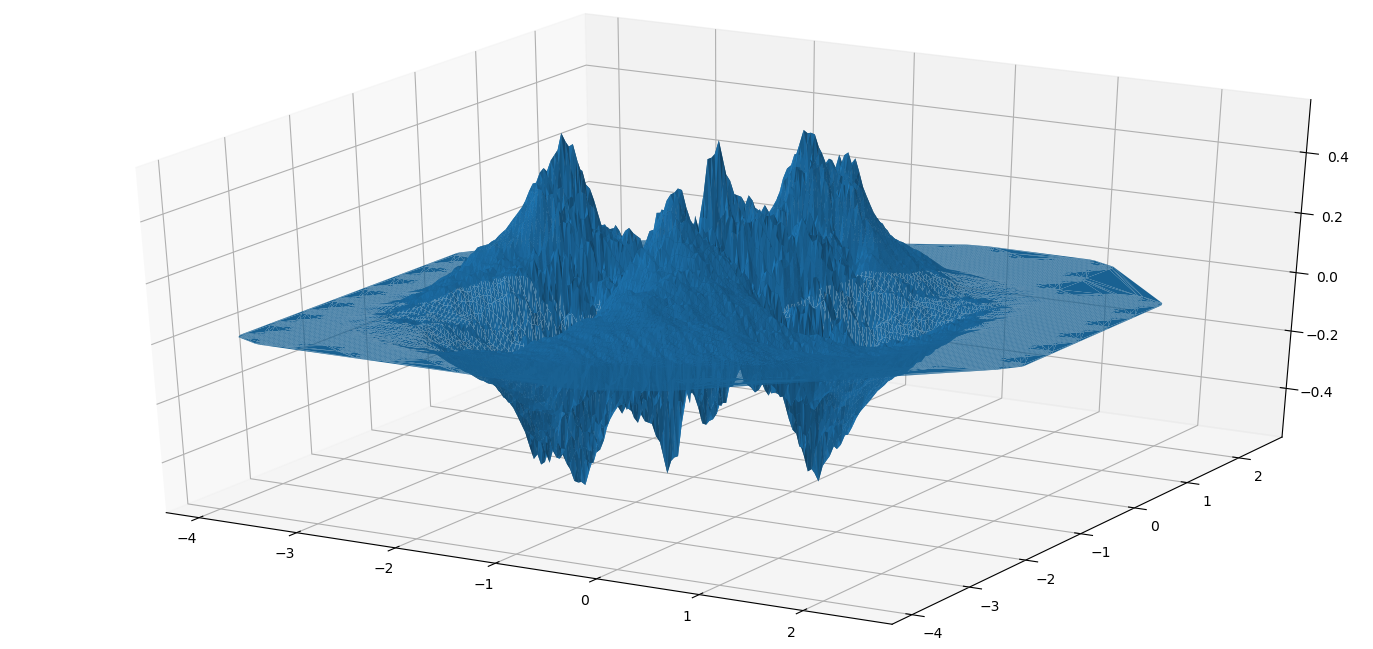}
\caption{The partial derivative of Bear-4 in $x, y, y$.}
\label{riss:7}
\end{minipage}
\hfill
\begin{minipage}[h]{0.3\linewidth}
\includegraphics[width=1\linewidth]{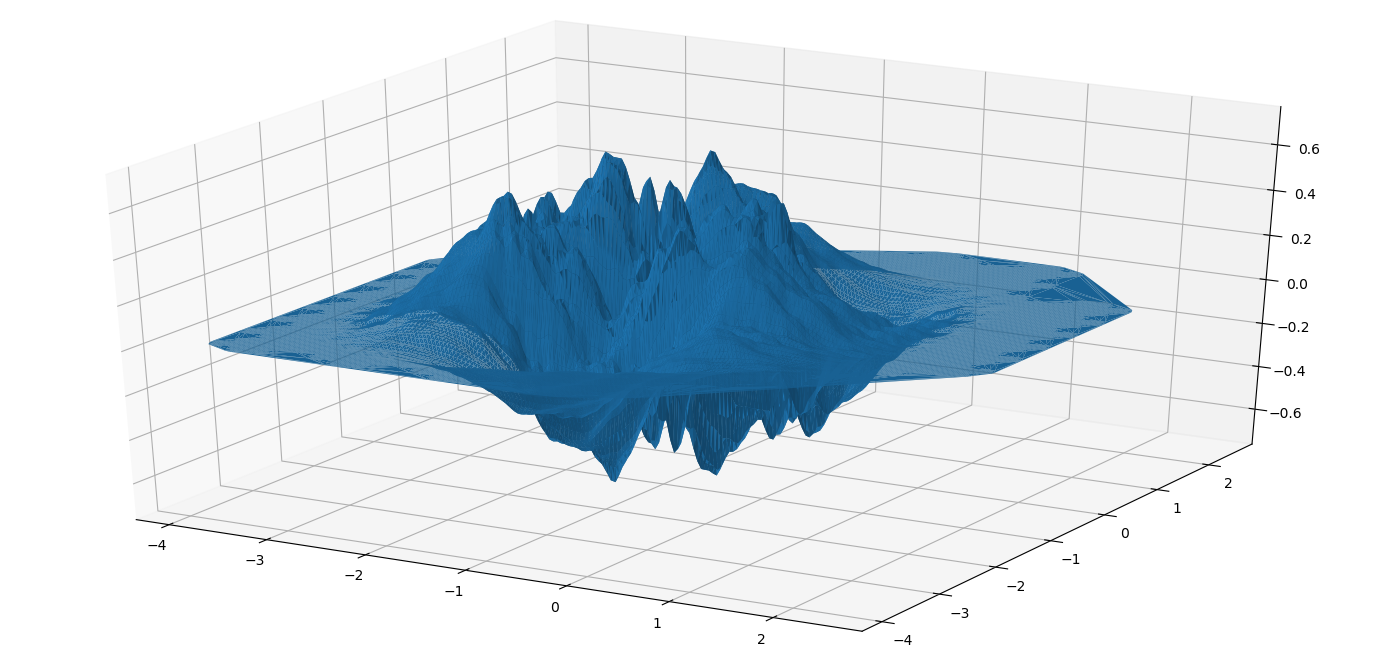}
\caption{The partial derivative of Bear-4 in $y, y, y$.}
\label{riss:8}
\end{minipage}
\end{center}
\end{figure}

\begin{remark}The values of regularity for large orders of tile B-splines are not given since the computation of joint spectral radius is hard for large sizes of matrices. The transition matrices grow fast when we increase the order of convolution. The question of the asymptotics of the regularity with the increase of order of B-splines remains open. 
\end{remark} 

\end{section}

\begin{section}{Subdivision schemes} \label{par_subdiv}
In this Section we apply the obtained tile B-splines to the construction of special class of subdivision schemes (SubD algorithms). 

Subdivision schemes are linear iterative algorithms for interpolating or extrapolating functions by given values on some rough lattice or mesh. The resulting surface is the limit of iterative approximations constructed on each iteration by the values computed on increasingly dense lattice. 
For the planar lattice, the limit surface is a graph of the limit function. The manifolds can also be obtained (see the end of Section \ref{par_subdiv}). 
Further we consider these algorithms and their properties. 

Let again $M \in \Z^{d \times d}$ be an expanding matrix. For an arbitrary mask (the set of numbers) $\{c_k\}$ the following subdivision (SubD) operator $S: \ell_{\infty}(\Z^d) \to \ell_{\infty}(\Z^d)$ is introduced: 
$$[Su](k) = \sum \limits_{j \in \Z^d}{c_{k - Mj} \cdot u(j)}, \quad u \in \ell_{\infty}(\Z^d).$$ 
After applying the subdivision operator several times, we obtain the sequence of values based on which the function could be constructed. 

\begin{ex}{The univariate case}. Let $M = 2$, $u$ be the sequence of values at integer points. One can construct a function $f_0(\cdot) = \sum_{k \in \Z} u(k) \chi_{[0, 1]}(\cdot - k)$ constant on the segments $[k, k + 1], k \in \Z$. After $t$ iterations of the subdivision operator we obtain a function 
$$f_q(\cdot) = \sum_{k \in \Z} [S^qu](k) \chi_{[0, 1]}(2^q\cdot - k),$$
that is constant on the segments $[\frac{k}{2^q}, \frac{k + 1}{2^q}], k \in \Z$. 

Instead of the function $\chi_{[0, 1]}$, one can consider any other function $h(x)$ satisfying a partition of unity property, i.e., $\sum_{j \in \Z}h(x - j) \equiv 1$, that is equivalent to $\widehat{h}(0) = 1, \widehat{h}(s) = 0, s \ne 0$. 
For example, $h(x) = B_1(x)$ gives piecewise-linear functions $f_q$ on each iteration.  
If for an admissible function $h$, there is a limit of functions $f_q$ as $q \to \infty$ in $L_{\infty}$ for every sequence $u$, then it is said that the subdivision scheme \textit{converges}. The corresponding limit is called \textit{the limit function} for given $u$. 
\end{ex}

We do the same in the general multivariate case. 
Define $$f_q = \sum \limits_{k \in \Z^d} [S^q u](k) \chi_{[0, 1]^d}(M^q \cdot - k) \text{ (piecewise-constant approximation)}.$$
Instead of $\chi_{[0, 1]^d}$ we can use any function $h(x)$ with the property $\sum_{j \in \Z^d}h(x - j) \equiv 1$. In particular, it could be the indicator of an arbitrary tile $h = \chi_G$, a multivariate B-spline $B_1$, etc.  

Further we consider the convergence of the algorithm in $C^n$. For $n \ge 0$, we define a function space  
$$Q_n = \left\{h \in C^n(\R^d) \mid \widehat{h}(0) = 1, \widehat{h}\,\,\, \parbox{9em}{ has zeros of order at least } \,\,n + 1 \,\,\parbox{7em}{ at each point } \Z^d \setminus \{0\}\right\}.$$
Then $h \in Q_n$ if and only if $h \in C^n{(\R^d)}$, $\sum_{j \in \Z^d}h(x - j) \equiv 1$, where every algebraic polynomial $P_n$ of variables $x_1, \ldots, x_d$ of degree at most $n$ is in the linear span of $\{h(\cdot - j)\}_{j \in \Z^d}$ (\cite{BVR}). 

For example, we have $\chi_{[0,1]^d} \notin Q_0$, while the classical B-splines $B_1$ and $B_{n + 1}$ belong to $Q_0$ and $Q_n$ respectively. 

\begin{definition}  \label{defsubd}
A subdivision scheme converges in $C^n$ if for some $h \in Q_n$, for every $u \in \ell_{\infty}(\Z^d)$, there exists a function $f_u \in C^n(\R^d)$ such that 
 $$\left\|\sum \limits_{k \in \Z^d} [S^q u](k) h(M^q \cdot - k) - f_u(\cdot)\right\|_{C^n(\R^d)} \to 0 \quad as q \to \infty.$$
\end{definition}
\begin{remark} We can always choose $h = B_{n + 1}$ (the classical B-spline of order $n$). Indeed, the convergence does not depend on the choice of initial function $h \in Q_n$, i.e., we can replace the words ``for some $h \in Q_n$'' in Definition \ref{defsubd}  by ``for every $h \in Q_n$''. 
\end{remark}

The operator $S$ is linear and invariant with respect to integer shifts, i.e., applying $S$ to the shift of the sequence $u$ by $k$ we obtain the shift of the sequence $Su$ by $k$.  
Therefore, it is sufficient to know the limit function only for the $\delta$-sequence $\delta(k) = \delta^0_k$, we denote it by $f_{\delta}$ (in case of convergence). Then, for arbitrary sequence $u \in \ell_{\infty}(\Z^d)$, the limit function  has the form  
 $$f_{u}(x) = \sum \limits_{k \in \Z^d}{f_{\delta}(x - k) \cdot u(k)},$$
since $u = \sum \limits_{k \in \Z^d} u(k)\delta(k)$. 
It turns out \cite{CDM} that the function $f_\delta$ satisfies the refinement equation with the coefficients $c_k$ of the subdivision operator: 
$$f_\delta(x) = \sum \limits_{k \in \Z^d}{c_k f_\delta(Mx - k)}.$$

We define the numbers $c_k$ as the coefficients of refinement equations generating tile B-splines. Therefore, the function $f_\delta$ is the tile B-spline $B_n^G$, and for an arbitrary initial sequence the limit function is a linear combination of the integer shifts of $B_n^G$. In particular, the regularity of the limit function coincides with the regularity of the tile B-spline.  

Note that the number of arithmetic operations required for applying the SubD-operator in each step  depends on the number of nonzero coefficients $c_k$ of the refinement equation. The less it is, the faster is the iteration of the subdivision scheme. 

As we obtained in Sections \ref{classic} and \ref{tile_spl}, the classical B-spline $B_n$ of $d$ variables of order $n$ has $(n + 2)^d$ nonzero coefficients. However, in every dimension we can consider the tile B-spline with $n + 2$ coefficients generated by a two-digit tile that is a parallelepiped (such tiles exist in every dimension, see \cite{Zai2}). 
Besides, both the B-splines and the limit surfaces of subdivision schemes coincide with the classical case, since they are the convolutions of the same indicators of parallelepipeds. The only difference is the way the algorithm is organized. We obtain the following theorem: 

\begin{theorem} \label{th_subd} A subdivision scheme in $\R^d$ based on a tile B-spline $B_n^G$, where $G$ is a two-digit tile that is a parallelepiped, has the complexity of one iteration equal to $n + 2$. The classical $d$-variate subdivision scheme of order $n$ based on the product of $d$ univariate B-splines $B_n$ has the complexity of one iteration $(n + 2)^d$. These algorithms generate the same limit surfaces. 
\end{theorem}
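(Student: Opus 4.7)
The plan is to verify the three claims of the theorem in turn: the count of coefficients for the tile scheme, the count for the classical scheme, and the identification of the resulting limit surfaces.

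For the first claim, I would apply Proposition \ref{prop_mu} directly. Since $G$ is a two-digit tile, we have $D(M) = \{0, e\}$ for a single primitive integer vector $e$. Any sum $s_1 + \cdots + s_{n+1}$ with $s_i \in \{0, e\}$ must equal $je$ for some $j \in \{0, 1, \ldots, n+1\}$, and the number of ordered tuples realizing this value is $\binom{n+1}{j}$. Hence by Proposition \ref{prop_mu} the refinement equation of $B_n^G$ has exactly $n+2$ nonzero coefficients $C_{je} = 2^{-n}\binom{n+1}{j}$, so one iteration of the SubD operator requires $n+2$ multiplications (or additions with precomputed weights) per output node.

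For the second claim I would observe that the mask of the classical $d$-variate product B-spline factors as $a_n(\xi_1,\ldots,\xi_d) = \prod_{j=1}^{d} 2^{-n-1}(1 + e^{-2\pi i \xi_j})^{n+1}$. Expanding this product yields $(n+2)^d$ monomials indexed by $(k_1,\ldots,k_d)$ with $0 \le k_j \le n+1$; each has a coefficient that is a product of positive binomial numbers and is therefore nonzero. This gives exactly $(n+2)^d$ nonzero coefficients in the refinement equation, and hence that complexity per iteration.

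For the third claim, since $G$ is a parallelepiped tile, there is an invertible linear map $T$ such that $G = T([0,1]^d)$ (up to a translation, which can be absorbed). A change of variables in the $(n+1)$-fold convolution gives $B_n^G(x) = |\det T|^{n} B_n^{\otimes}(T^{-1} x)$, where $B_n^{\otimes}(x) = B_n(x_1)\cdots B_n(x_d)$ is the classical product B-spline. Thus the refinable functions generating the two schemes differ only by the affine change of coordinates $T$ (and an overall constant). Since the limit function of a subdivision scheme is a linear combination of integer shifts of $f_\delta$ on the lattice corresponding to its dilation matrix (here $M$ versus $2I$), and since $T$ maps the $M$-refinement structure on $\mathbb{Z}^d$ to the $2I$-refinement structure of $T\mathbb{Z}^d$, the two limit surfaces coincide as geometric objects after the identification by $T$.

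The main obstacle is the third step: the two algorithms use different dilation matrices and therefore produce their refinement-level approximations on different lattices, so the claim that they yield the \emph{same} surface requires more than just the equality $B_n^G(x) = |\det T|^n B_n^{\otimes}(T^{-1}x)$. One has to check that, under the affine identification induced by $T$, the subdivision operator for $B_n^G$ is conjugate to the classical one, so that iterates correspond and their limits match; alternatively, one can bypass this by appealing to the intrinsic characterization of the limit function as the unique solution of the refinement equation, and then use the coordinate change to transfer this characterization between the two settings.
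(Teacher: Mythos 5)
Your coefficient counts follow the paper's own route: the $n+2$ nonzero coefficients come straight from Proposition \ref{prop_mu} with $D=\{0,e\}$ (binomial coefficients $2^{-n}\binom{n+1}{j}$ on the points $je$), and the $(n+2)^d$ count comes from expanding the product mask, exactly as in Sections \ref{classic} and \ref{tile_spl}; the paper's ``proof'' of the theorem is nothing more than these counts plus one sentence asserting that both schemes have the same generating convolution, so on the first two claims you and the paper coincide.

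On the third claim you are right to flag the obstacle, but your first suggested repair is not viable as stated: $T$ cannot map the $M$-refinement structure to the $2I$-refinement structure, since $|\det M|=2$ while $|\det(2I)|=2^d$, so no affine change of variables conjugates the two subdivision operators (the tile scheme roughly needs $d$ of its iterations to match one classical iteration). Your second route is the correct one and is, implicitly, what the paper uses: the limit of a convergent scheme for data $u$ is $f_u=\sum_k f_\delta(\cdot-k)u(k)$, where $f_\delta$ is the compactly supported solution of the refinement equation, and this object does not remember the dilation matrix used to organize the iterations; since $\chi_G=\chi_{[0,1]^d}(T^{-1}\cdot)$ with $|\det T|=1$, one gets $B_n^G=B_n^{\otimes}(T^{-1}\cdot)$ and hence identical limit surfaces. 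One residual point, which the paper also glosses over, is that for the \emph{integer} shifts of $B_n^G$ and of $B_n^{\otimes}$ to span the same objects you need the identification $T$ to preserve $\Z^d$ (i.e., $T\in \mathrm{GL}_d(\Z)$ up to translation, as happens for the Square tile, which is literally a translated unit square); otherwise the two schemes agree only up to the affine reparametrization of the control lattice, which is how the statement should be read. So: same approach as the paper on the counts, the correct bypass on the limit-surface claim, with one proposed mechanism (conjugation of refinement structures) that should be dropped.
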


It is known (see, for example, \cite{CCS, DL}) that the necessary conditions for convergence of SubD algorithm in $C^n$ are  

1) the corresponding refinement equation has a solution $\varphi \in C^n$. In general case, it is only known that a refinement equation has always a unique solution in the space of tempered distributions $\mathcal{S}'$ up to multiplication by a constant \cite{NPS}. 
 
2) The mask $a$ of the equation satisfies \textit{the sum rules}, i.e., has zeros of order at least $n + 1$ 
at the points $M^{-T}\Delta_{*}$ for all $\Delta_{*} \in D_{*} \setminus \{0\}$, where $D_{*}$ is a digit set corresponding to the transposed matrix $M^T$, and $a(0) = 1$ (see, for example, \cite{CDM}). This condition could be simply rewritten as linear relations on the coefficients $c_k$ of the refinement equation. In particular, the sum rules of order $n = 0$ are equivalent to the property 
$\sum_{k} c_{Mk + \Delta} = 1$ for every vector $\Delta \in D$, where $D$ is the set of digits of the basis tile. 

These conditions are not sufficient, such examples are well-known \cite{KPS}. Nevertheless, if the conditions 1, 2 are satisfied and additionaly the limit refinable function $\varphi$ is stable, i.e., its integer shifts are linearly independent, then the algorithm necessarily converges in $C^n$ (\cite{CDM} for $n = 0$, \cite{P01} for $n \ge 1$).   
It is known that the function $\varphi$ is stable if and only if its Fourier transform has no periodic zeros, i.e., there is no point $\xi \in \R^d$, for which $\widehat \varphi (\xi + k) = 0$ for all $k \in \Z^d$ (see, for example, \cite{CDM}). 

\begin{propos} \label{pr_stab}
For every $n \ge 1$ the B-spline $B_n$ of $d$ variables of order $n$ is continuous, stable and its refinement equation satisfies the sum rules of order $n$. 
\end{propos}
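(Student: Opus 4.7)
The plan is to reduce everything to the well-known univariate facts by exploiting the tensor-product structure $B_n(x_1,\ldots,x_d)=B_n(x_1)\cdots B_n(x_d)$, whose mask is the product $a_n(\xi)=\prod_{i=1}^d a_n(\xi_i)$ with $a_n(\xi)=\left(\frac{1+e^{-2\pi i\xi}}{2}\right)^{n+1}$ and whose Fourier transform is the product $\widehat{B}_n(\xi)=\prod_{i=1}^d\widehat{B}_n(\xi_i)$ with $\widehat{B}_n(\xi_i)=\left(\frac{1-e^{-2\pi i\xi_i}}{2\pi i\xi_i}\right)^{n+1}$.

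\textbf{Continuity.} First recall that the univariate $B_n\in C^{n-1}(\R)$, so for $n\ge 1$ it is continuous on $\R$. Since $B_n(x)$ is the pointwise product of $d$ bounded continuous functions of the individual coordinates, it is jointly continuous on $\R^d$.

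\textbf{Sum rules.} The dilation matrix is $M=2I_d$, hence $M^T=2I_d$ and one may take the transposed digit set as $D_*=\{0,1\}^d$; thus $M^{-T}D_*\setminus\{0\}=\{0,\tfrac12\}^d\setminus\{0\}$. At any such point at least one coordinate equals $\tfrac12$, and the univariate factor satisfies $a_n(\tfrac12)=\left(\frac{1+e^{-\pi i}}{2}\right)^{n+1}=0$ with order of vanishing exactly $n+1$. The product therefore vanishes there to order at least $n+1$. Moreover $a_n(0)=\prod_i 1=1$, which gives the normalization required by the sum rules of order $n$.

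\textbf{Stability.} Following the criterion quoted just before the proposition, I need to show that $\widehat{B}_n$ has no periodic zeros, i.e.\ there is no $\xi\in\R^d$ with $\widehat{B}_n(\xi+k)=0$ for all $k\in\Z^d$. The univariate factor $\widehat{B}_n(\xi_i)$ vanishes precisely on $\Z\setminus\{0\}$. Given any $\xi\in\R^d$, choose $k=k(\xi)\in\Z^d$ coordinatewise: set $k_i=-\xi_i$ if $\xi_i\in\Z$, and otherwise pick $k_i\in\Z$ so that $\xi_i+k_i\in(0,1)$. Then every coordinate $\xi_i+k_i$ of $\xi+k$ lies in $\{0\}\cup(0,1)\subset\R\setminus(\Z\setminus\{0\})$, so each univariate factor $\widehat{B}_n(\xi_i+k_i)$ is nonzero and therefore $\widehat{B}_n(\xi+k)\neq 0$. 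No periodic zero exists, so the integer translates of $B_n$ are linearly independent and $B_n$ is stable.

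Nothing here is genuinely hard: all three assertions collapse to known univariate facts once the tensor-product structure is used. The only point deserving care is the sum rules argument, where one must verify that the zero of order $n+1$ supplied by a single univariate factor $a_n(\tfrac12)$ is enough to give the joint order $n+1$ at every nonzero vertex of $\{0,\tfrac12\}^d$; this is immediate from the multiplicativity of orders of vanishing.
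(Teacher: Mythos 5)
Your proof establishes the statement only for the classical tensor-product B-spline with cube tile $[0,1]^d$ and dilation matrix $M=2I_d$, but in this paper $B_n$ is the tile B-spline $B_n^G$ for an arbitrary tile $G$ and arbitrary expanding integer matrix $M$ (the shorthand $B_n=B_n^G$ is fixed in Section \ref{tile_spl}), and that generality is exactly what Proposition \ref{pr_stab} is needed for: it is the postponed justification of Proposition \ref{prop1} (Riesz property of the translates of tile B-splines) and the input to Corollary \ref{cor_conv} and Theorem \ref{th_subd}, which concern the Bear and Dragon splines built from the matrices \eqref{twotiles}. Those splines have no tensor-product structure, so each step of your argument fails outside the cube case: there is no factorization $\widehat{B}_n(\xi)=\prod_i\widehat{B}_n(\xi_i)$, the zero set of $\widehat{B}_0=\widehat{\chi}_G$ for a fractal tile is not a union of coordinate hyperplanes and admits no such explicit description, the digit set is a general $D(M)$ rather than $\{0,1\}^d$, and the sum-rule points $M^{-T}\Delta_*$ are not the vertices of $\{0,\tfrac12\}^d$. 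So this is a genuine gap of scope, not a different but equivalent route.

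The general case is handled by different (and in fact shorter) arguments, which is what the paper does: continuity for $n\ge 1$ holds because $B_n$ is a convolution of at least two indicators of compact sets; stability holds because $\widehat{B}_n=(\widehat{B}_0)^{\,n+1}$ and $\widehat{B}_0$ has no periodic zeros --- not by a formula for its zeros, but because the integer shifts of $\chi_G$ are orthonormal by the definition of a tile (equivalently $\sum_{k\in\Z^d}|\widehat{\chi}_G(\xi+k)|^2\equiv 1$), which rules out any point $\xi$ with $\widehat{\chi}_G(\xi+k)=0$ for all $k\in\Z^d$; and the sum rules hold because $a_n=a_0^{\,n+1}$, where $a_0(\xi)=\frac1m\sum_{\Delta\in D}e^{-2\pi i(\Delta,\xi)}$ satisfies the order-zero sum rule ($a_0(0)=1$ and $a_0(M^{-T}\Delta_*)=0$ for $\Delta_*\in D_*\setminus\{0\}$) precisely because the digits form a complete set of residues of $\Z^d/M\Z^d$; raising to the power $n+1$ then yields zeros of order at least $n+1$. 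Your computations are correct within the cube case, but as written they do not prove the proposition as stated and used.
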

\begin{proof} 
As the convolution of indicators of several compact sets is continuous, the function $B_n$ is continuous. 
The stability follows from the fact that the Fourier transform $\widehat B_n(\xi)$ does not have periodic zeros. Indeed, since the Fourier transform of convolution is the product of Fourier transforms of multipliers, it follows that $\widehat B_n(\xi) = (\widehat B_0(\xi))^{n + 1}$. As the integer shifts of tile are linearly independant, the corresponding Fourier transform $\widehat B_0(\xi)$ does not have a periodic zero, therefore, the Fourier transform of $\widehat B_n(\xi)$ does not have a periodic zero. 

Now we check the sum rules. By the definition of tile, the $B_0$ satisfies the rule of order zero, since $c_{\Delta} = 1$ for each $\Delta \in D$ and the digits are from different cosets $\Z^d / M\Z^d$. 
Let $D_{*}$ be an arbitrary digit set corresponding to the transposed matrix $M^T$. Then the mask $a_0$ satisfies the sum rule of order zero that can also be rewritten in frequency domain as follows: $a_0(M^{-T}\Delta_{*}) = 0$ for all $\Delta_{*} \in D_{*} \setminus \{0\}$ and  $a_0(0) = 1$. 
Since the mask of tile B-spline $B_n$ is equal to $a_n(M^{-T}\Delta_{*}) = a_0(M^{-T}\Delta_{*})^{n + 1}$, the function $B_n$ satisfies the sum rules of order $n$. 
\end{proof}
\begin{cor}\label{cor_conv}
Let the H{\" o}lder regularity of the tile B-spline $B_n$ be $\alpha$. Then the subdivision algorithm based on $B_n$ converges in $C^k$ for every $k \le \alpha$. 
\end{cor}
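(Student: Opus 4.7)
The plan is to recognise the corollary as an immediate application of the sufficient criterion for $C^k$-convergence of a subdivision scheme stated just before it: the scheme converges in $C^k$ provided the refinement equation has a solution in $C^k$, that solution is stable, and the mask satisfies the sum rules of order $k$ (\cite{CDM} for $k = 0$, \cite{P01} for $k \ge 1$). I would verify these three conditions for $\varphi = B_n$ and an arbitrary integer $k \le \alpha$, essentially by reading them off Proposition \ref{pr_stab} together with the definition of $\alpha$.

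Two of the three conditions will be immediate from material already in the paper. The definition of the general H{\" o}lder exponent gives $B_n \in C^k(\R^d)$ for every integer $k \le \alpha = \alpha_{B_n}$. Proposition \ref{pr_stab} will supply the remaining two at once: it shows that the integer shifts of $B_n$ are linearly independent, hence $\widehat{B_n}$ has no periodic zero, which is the stability condition; and it shows that the mask $a_n = a_0^{\,n+1}$ satisfies the sum rules of order $n$, and therefore of every order $\le n$.

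The main obstacle is the compatibility check $k \le n$, which is what allows sum rules of order $n$ to specialise to sum rules of order $k$. My intended argument is that the factorisation $a_n = a_0^{\,n+1}$ forces $\widehat{B_n}$ to vanish to order exactly $n+1$ at the critical cosets $M^{-T} D_{*} \setminus \{0\}$, so by Strang--Fix the integer shifts of $B_n$ reproduce polynomials of degree at most $n$ (cf.\ Proposition \ref{prop_poly}); combined with the standard implication that a stable refinable function lying in $C^{n+1}$ would reproduce polynomials of degree $n+1$, this yields $\lfloor \alpha \rfloor \le n$, so every integer $k \le \alpha$ automatically satisfies $k \le n$. Once this compatibility is settled, combining $B_n \in C^k$, the stability of $B_n$, and the sum rules of order $k$ via the cited criterion delivers $C^k$-convergence, with the limit function being $B_n$ on the $\delta$-input and $\sum_{j \in \Z^d} u(j)\, B_n(\cdot - j)$ on a general input $u \in \ell_{\infty}(\Z^d)$, in agreement with the discussion at the beginning of the section.
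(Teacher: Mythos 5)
Your proposal is correct and follows essentially the same route as the paper, which obtains the corollary directly by combining Proposition \ref{pr_stab} (continuity, stability, sum rules of order $n$) with the sufficiency criterion cited from \cite{CDM} and \cite{P01}. Your additional Strang--Fix argument showing $\lfloor \alpha \rfloor \le n$ is a detail the paper leaves implicit, but it fills in the same scheme rather than changing the approach.
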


One of the most important issues of subdivision algorithm theory is the rate of convergence. In \cite{CJ} the rate of convergence of subdivision algorithms in $C^n(\R)$ (generalized rate of convergence) was defined by means of the  difference schemes. 
Then it was generalized to the multivariate case (\cite{CGV}). We use similar definition in terms of the work \cite{P01}. For simplicity, we will assume that the matrix $M$ is isotropic, i.e., it is diagonalizable and all its eigenvalues are equal in moduli (similarly, the general case is considered, see \cite{CP}). 
 
It turns out \cite{CDM, CCS} that the algorithm converges in $C^n$ with exponentional rate: for every $u \in \ell_{\infty}, \|u\|_{\ell_{\infty}} = 1$, $r = 0, \ldots, n$, we have    
$$\|f_q^{(r)} - f^{(r)}\|_{C(\R^d)} \le C \cdot \tau_r^{-q},$$
where $f = f_u$ is a limit function for $u$, $f_q$ is the result of the $q$-th iteration. The exponents of convergence $\tau_0 = \ldots = \tau_{n - 1} = m^{\frac{1}{d}}$, where $m = |\det{M}|$, the value $\tau_n$ depends on the coefficients of subdivision algorithm. 
In particular, we have $$\tau_n= \rho_{C}(T_0|_{W_k}, T_1|_{W_k}) \cdot {m}^{\frac{n}{d}},$$ 
where   
$$\rho_{C}(A_0, A_1) = \lim \limits_{s \to \infty} \max \limits_{\sigma} \|A_{\sigma(1)}\ldots A_{\sigma(s)}\|^{1/s}, \, \sigma\colon \{1, \ldots, s\} \to \{0, 1\}$$ 
is a joint spectral radius of two operators (see Section \ref{par_smooth} for details and definition of $k$).  

\begin{definition} The generalized rate of convergence of the subdivision algorithm is a number $n - \frac{1}{d}\log_m{\tau} = -\frac{1}{d}\log_m \rho$. 
\end{definition}

Then the following fact is well-known \cite{P01} 
\begin{propos} The H{\" o}lder regularity $\alpha_\varphi$ of a limit function $\varphi$ of the subdivision algorithm is at least the generalized rate of convergence. If the function $\varphi$ is stable, these parameters coincide. 
\end{propos}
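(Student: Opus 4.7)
The plan is to prove both inequalities by relating the convergence estimates $\|f_q^{(r)} - \varphi^{(r)}\|_C \le C\tau_r^{-q}$ to local smoothness of $\varphi$. Reduce first to the case $u = \delta$, for which the subdivision limit is $\varphi$ itself and the $q$-th iterate is $f_q = \sum_{k}(S^q\delta)(k)\, h(M^q\cdot - k)$ for some auxiliary $h \in Q_n$. Because $M$ is isotropic, $\rho(M) = m^{1/d}$, so one iteration refines the working scale by $m^{1/d}$.

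For the lower bound $\alpha_\varphi \ge n - \frac{1}{d}\log_m \tau_n$ I would argue as follows. Since $h$ is a fixed smooth compactly supported template, $f_q^{(n)}$ is piecewise Lipschitz at scale $m^{-q/d}$, with Lipschitz constant growing only like a controlled power of $m^{1/d}$ coming from the lower orders $\tau_0,\dots,\tau_{n-1}$. For any $h$ with $|h|\asymp m^{-q/d}$, split
\[
\varphi^{(n)}(x+h) - \varphi^{(n)}(x) = [\varphi^{(n)}(x+h) - f_q^{(n)}(x+h)] + [f_q^{(n)}(x+h) - f_q^{(n)}(x)] + [f_q^{(n)}(x) - \varphi^{(n)}(x)].
\]
The outer terms are $O(\tau_n^{-q})$ by the $C^n$-convergence rate, and the middle term is bounded by the Lipschitz constant of $f_q^{(n)}$ times $|h|$. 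Balancing the two contributions yields a Hölder bound for $\varphi^{(n)}$ with exponent at least $-\frac{1}{d}\log_m\tau_n$, i.e.\ $\alpha_\varphi$ is at least the generalized rate. This direction uses only that $\varphi$ is the SubD limit; stability is not required.

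For the matching upper bound under stability, the idea is to transfer the sequence-level decay of $S^q\delta$, encoded by products of the transition matrices $T_0,T_1$ restricted to $W_k$, into a genuine function-level lower bound on $\|f_{q+1} - f_q\|_{C^n}$. Stability of the integer shifts of $\varphi$ gives a two-sided equivalence $\|u\|_{\ell_\infty}\asymp\|\sum_k u(k)\varphi(\cdot-k)\|_{C}$ on the relevant shift space, which can be propagated to the dilated lattice $M^{-q}\mathbb{Z}^d$; this rules out the possibility that the true rate of decay of the iterates is faster than the Hölder regularity would predict. Combining this with the identity relating $\tau_n$ to $\rho_C(T_0|_{W_k}, T_1|_{W_k})$ yields $\tau_n \le m^{(\alpha_\varphi - n)/d}$, and hence equality of the generalized rate and $\alpha_\varphi$.

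The main obstacle is the reverse inequality under stability: without stability, the iterates $f_q$ could in principle converge to $\varphi$ faster than the local smoothness of $\varphi$ permits, because of cancellations inside the sums $\sum_k u(k)\varphi(\cdot - k)$. The role of stability is precisely to exclude such cancellations by ensuring that the shifts of $\varphi$ form a Riesz-type basis, so that sequence norms of $S^q\delta$ and function norms of the iterate differences are comparable. This comparison, together with the identification of the joint spectral radius as the exact asymptotic growth rate of the transition matrix products, is the nontrivial ingredient; the rest is a clean approximation-theoretic bookkeeping.
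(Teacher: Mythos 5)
You should first know that the paper does not prove this proposition at all: it is quoted as a known fact with a reference to \cite{P01}, where (as in \cite{CDM}, \cite{CP}) both quantities are identified with $-\log_{\rho(M)}$ of the joint spectral radius of the transition matrices $T_0,T_1$ restricted to $W_k$. Judged on its own, your outline follows the right general idea but has a genuine gap in each direction, and in both cases the gap sits exactly where the transition-matrix machinery is needed. In the lower-bound direction, the three-term splitting is fine, but the middle term carries all the content and your justification of it is incorrect: the Lipschitz constant of $f_q^{(n)}$ is not controlled by ``the lower orders $\tau_0,\dots,\tau_{n-1}$'' --- those are the trivial geometric rates $m^{1/d}$ and say nothing about the modulus of continuity of the $n$-th derivative of the iterates. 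What the template gives for free is only $\mathrm{Lip}\bigl(f_q^{(n)}\bigr)\lesssim m^{q(n+1)/d}\,\|S^q\delta\|_{\ell_\infty}$, and balancing this against the $C^n$-error decay produces a H\"older exponent strictly smaller than $\alpha_\varphi-n$. The balancing recovers the claimed sharp exponent only if one already knows $\mathrm{Lip}\bigl(f_q^{(n)}\bigr)\lesssim m^{q/d}\,\bigl(\rho\, m^{n/d}\bigr)^q$, i.e.\ that the $(n+1)$-st order differences of the data $S^q\delta$ decay like $\rho^q$; that is precisely the difference-scheme / joint-spectral-radius estimate, so as written you assume the heart of the theorem. (A Bernstein-type route via telescoping $f_{j+1}-f_j$ would require a template $h\in Q_n$ that is simultaneously smooth, stable and refinable with respect to the same dilation $M$, which is not available for free for a general matrix such as $M_B$.)

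In the equality direction you correctly name the mechanism --- stability gives two-sided comparability of coefficient norms and function norms and thus forbids cancellations --- but the argument stops where the work begins. To get the reverse inequality one must show that the products $T_{\sigma(1)}\cdots T_{\sigma(q)}$ restricted to $W_k$ are realized, up to constants supplied by stability, by local deviations of $\varphi$ from degree-$n$ polynomials on sets of diameter $\asymp m^{-q/d}$; only then does the H\"older regularity of $\varphi$ bound the norms of these products, hence the joint spectral radius, hence the generalized rate. Neither this local representation of $\varphi$ through transition-matrix products, nor the claimed propagation of the stability equivalence to the refined lattices $M^{-q}\Z^d$, is stated precisely or proved; ``the identification of the joint spectral radius as the exact growth rate of the products'' is essentially its definition and is not the missing ingredient. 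So the proposal is a reasonable plan that matches the standard route of \cite{P01}, but the two decisive estimates --- the sharp increment bound for the iterates and the stability-based lower bound on the restricted matrix products --- are missing.
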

\begin{cor}
Let the tile B-spline $B_n^G$ be from $C^n(\R^d)$. Then the corresponding subdivision algorithm converges with the generalized rate $n - \frac{1}{d}\log_m{\tau} = -\frac{1}{d}\log_m \rho = \alpha_\varphi$. In particular, we have $\tau = m^{\frac{n - \alpha_\varphi}{d}}$. 
\end{cor}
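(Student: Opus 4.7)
The proof is essentially a bookkeeping exercise that chains together results already established earlier in the section, so my plan is mostly about identifying which citation feeds into which. The one nontrivial observation is that all hypotheses of the stability-based rate formula are actually in place for every tile B-spline.

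First I would note that, by Proposition \ref{pr_stab}, the tile B-spline $\varphi = B_n^G$ is continuous, stable (its Fourier transform has no periodic zeros, hence its integer translates are linearly independent), and its refinement equation satisfies the sum rules of order $n$. These are precisely the structural hypotheses under which the convergence theory of subdivision schemes developed in this section applies without qualification.

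Next, the hypothesis $B_n^G \in C^n(\R^d)$ means that the H\"older exponent $\alpha_\varphi$ of $\varphi$ satisfies $\alpha_\varphi \ge n$. Combining this with the sum-rule and stability facts from the previous step, Corollary \ref{cor_conv} yields that the subdivision algorithm based on $\varphi$ indeed converges in $C^n$, so it makes sense to speak of its generalized rate of convergence in that space.

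The key substantive step is to invoke the proposition immediately preceding the corollary, which asserts that for a \emph{stable} limit refinable function the generalized rate of convergence coincides exactly with the H\"older regularity $\alpha_\varphi$ (and not merely bounds it from below). Since we have just verified stability via Proposition \ref{pr_stab}, this gives
\[
n - \tfrac{1}{d}\log_m \tau \; = \; -\tfrac{1}{d}\log_m \rho \; = \; \alpha_\varphi.
\]
Solving the first equality for $\tau$ gives $\tau = m^{(n - \alpha_\varphi)/d}$, which is the final assertion of the corollary. I do not anticipate a genuine obstacle here: the whole content is already packaged in the surrounding propositions, and the only thing one has to check is that tile B-splines satisfy the stability hypothesis, which is exactly what Proposition \ref{pr_stab} was set up to give.
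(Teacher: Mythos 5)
Your proposal is correct and follows essentially the same route the paper intends: the corollary is an immediate consequence of the stability of tile B-splines (Proposition \ref{pr_stab}, plus convergence in $C^n$ via Corollary \ref{cor_conv}) combined with the preceding proposition identifying the generalized rate of convergence with $\alpha_\varphi$ in the stable case, after which $\tau = m^{(n-\alpha_\varphi)/d}$ is just algebra.
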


Since tile B-splines are stable, we can estimate the generalized rate of convergence of their subdivision algorithms. Bear-4 is the smoothest spline among those considered in Table \ref{tab2}.  

\begin{theorem} \label{th_subd}
The subdivision algorithms constructed by the tile B-splines Bear-3 and Bear-4 converge in $C^2$ and $C^3$ respectively. 
\end{theorem}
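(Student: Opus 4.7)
The plan is to assemble three prior results in the paper rather than prove anything new from scratch: the smoothness theorem (Theorem~\ref{th_smooth}), the stability/sum-rules proposition (Proposition~\ref{pr_stab}), and the convergence corollary (Corollary~\ref{cor_conv}). The latter is itself a direct consequence of the general sufficient criterion quoted right after Proposition~\ref{pr_stab}: if the refinement equation has a solution in $C^n$, its mask satisfies the sum rules of order $n$, and the solution is stable, then the associated SubD algorithm converges in $C^n$.

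First I would apply Theorem~\ref{th_smooth} to record that $\varphi = B_3^G$ (Bear-3) lies in $C^2(\R^2)$ and $\varphi = B_4^G$ (Bear-4) lies in $C^3(\R^2)$; the exact H\"older exponents from Table~\ref{tab2} ($\alpha \approx 2.23$ and $\alpha \approx 3.07$) are strictly larger than $2$ and $3$ respectively, so the integer smoothness class is unambiguous. Second I would invoke Proposition~\ref{pr_stab} with $n=3$ and $n=4$: the proposition guarantees that for every $n\ge 1$, the tile B-spline $B_n$ is continuous, has stable integer shifts, and its mask $a_n = a_0^{n+1}$ satisfies the sum rules of order $n$. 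Thus for Bear-3 the mask satisfies sum rules of order $3$ (which in particular contains those of order~$2$), and for Bear-4 it satisfies sum rules of order~$4$ (containing those of order~$3$), and the stability hypothesis is met in both cases.

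Third, I would apply Corollary~\ref{cor_conv}: since the H\"older regularity of Bear-$n$ is at least $n-1$ (from Theorem~\ref{th_smooth}) for $n=3,4$, the subdivision algorithm converges in $C^k$ for every integer $k$ not exceeding this regularity. Taking $k=2$ for Bear-3 and $k=3$ for Bear-4 yields the claim. Equivalently, invoking the general sufficient criterion directly: the three hypotheses (solution in $C^k$, sum rules of order $k$, stability) are all in hand, so convergence in $C^k$ follows.

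There is no genuine obstacle at this stage; the theorem is a packaging result. All the nontrivial work has already been done earlier: the regularity computation via the joint spectral radius of the restricted transition matrices in Section~\ref{par_smooth} (which is the one step where something could have failed numerically), and the stability/sum-rules verification in Proposition~\ref{pr_stab}. The only thing to be slightly careful about is that the sufficient criterion for convergence in $C^k$ requires sum rules \emph{of order exactly $k$} together with $C^k$ smoothness of the refinable limit; since for Bear-$n$ one has sum rules of order $n$ and H\"older exponent at least $n-1$, the matching between the smoothness index and the sum-rule order for $k=n-1$ is automatic, so no boundary case arises.
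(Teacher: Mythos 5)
Your proposal is correct and follows essentially the same route as the paper, which obtains this theorem by combining the regularity statement (Theorem~\ref{th_smooth}, i.e.\ the computed H\"older exponents $\approx 2.23$ and $\approx 3.07$ from Table~\ref{tab2}) with the stability and sum-rules property of Proposition~\ref{pr_stab} and the convergence criterion packaged in Corollary~\ref{cor_conv}. The only slip is notational: in the paper's indexing Bear-$k$ is the convolution of $k$ indicators, so Bear-3 $=B_2^G$ and Bear-4 $=B_3^G$ (not $B_3^G$ and $B_4^G$); since the exponents you quote are the correct ones for these two splines and Proposition~\ref{pr_stab} applies for every $n\ge 1$, this off-by-one does not affect the argument.
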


The classical subdivision algorithms constructed by bivariate B-splines of the corresponding orders do not converge in $C^2$  ($C^3$ respectively). 

Note that the convergence of the algorithm in the space of functions of high regularity strongly influences the quality of generated surface, for example, the convergence in $C^2$ means that in every point the curvature of surface will converge to that of limit surface. In particular, if the limit surface is locally convex at some point, then the surfaces obtained after several iterations are locally convex as well. 

Since Bear-4 is optimal in terms of its rate of convergence and has a small number of nonzero coefficients (only five), let us consider the subdivision algorithm based on Bear-4. That scheme can be applied both in case when we initially have a function given at integers on the plane (for example, in the image processing) and in case when initial points form a rough approximation of a surface. Fig. \ref{shablon} shows a computation template. The values at the points marked with circles are updated by the iteration of the subdivision scheme with a linear combination of values of the neighbouring points. Each iteration the direction of computation is changed by means of the transform $M^{-1}$.   

\begin{figure}[ht]
\centering
\includegraphics[width=0.15\linewidth]{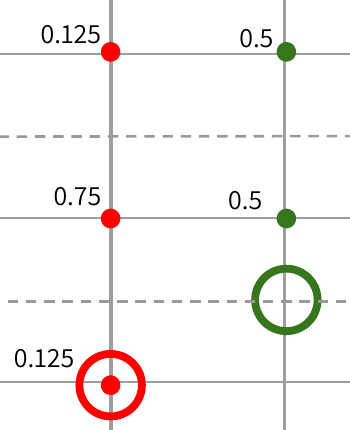}
\caption{The computation template of Bear-4 algorithm.}
\label{shablon}
\end{figure}

First consider the application of the Bear-4 subdivision algorithm to a surface of slightly deformed torus given by a rough approximation (see Fig. \ref{ris:0}). The next figures \ref{ris:1} -- \ref{ris:5} show the results after several iterations of Bear-4. 

Fig. \ref{ris:100} -- \ref{ris:102} illustrate the application of the Bear-4 algorithm to a surface with border with example of a catenoid. 

\begin{figure}[h]
\begin{center}
\begin{minipage}[h]{0.3\linewidth}
\includegraphics[width=1\linewidth]{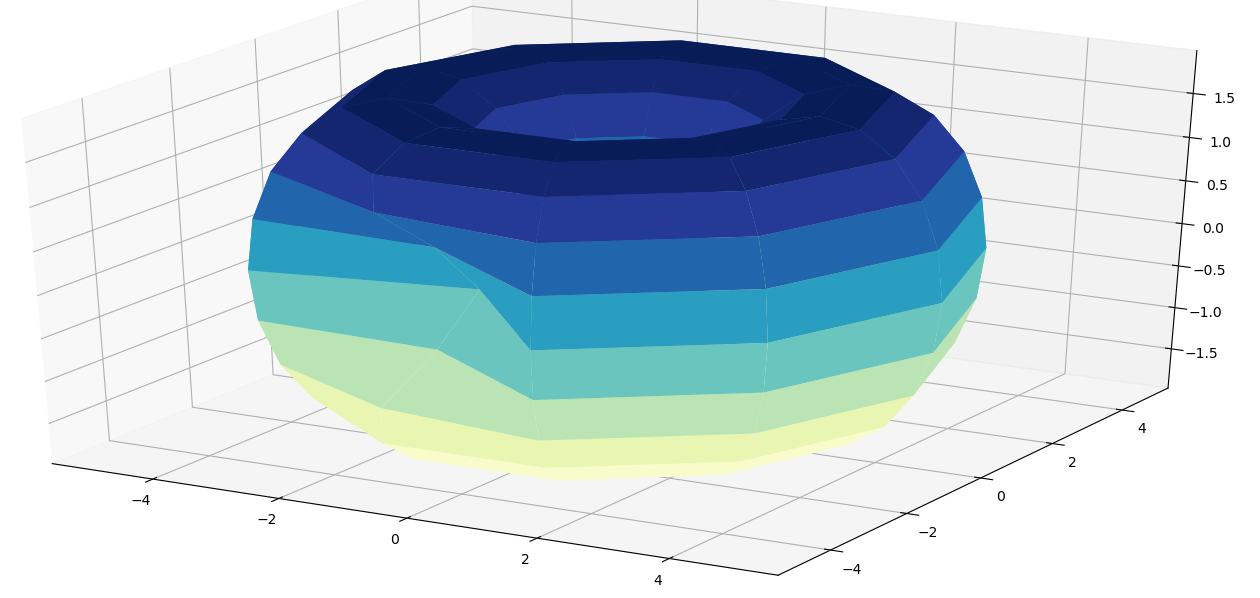}
\caption{Initial approximation.}
\label{ris:0}
\end{minipage}
\hfill
\begin{minipage}[h]{0.3\linewidth}
\includegraphics[width=1\linewidth]{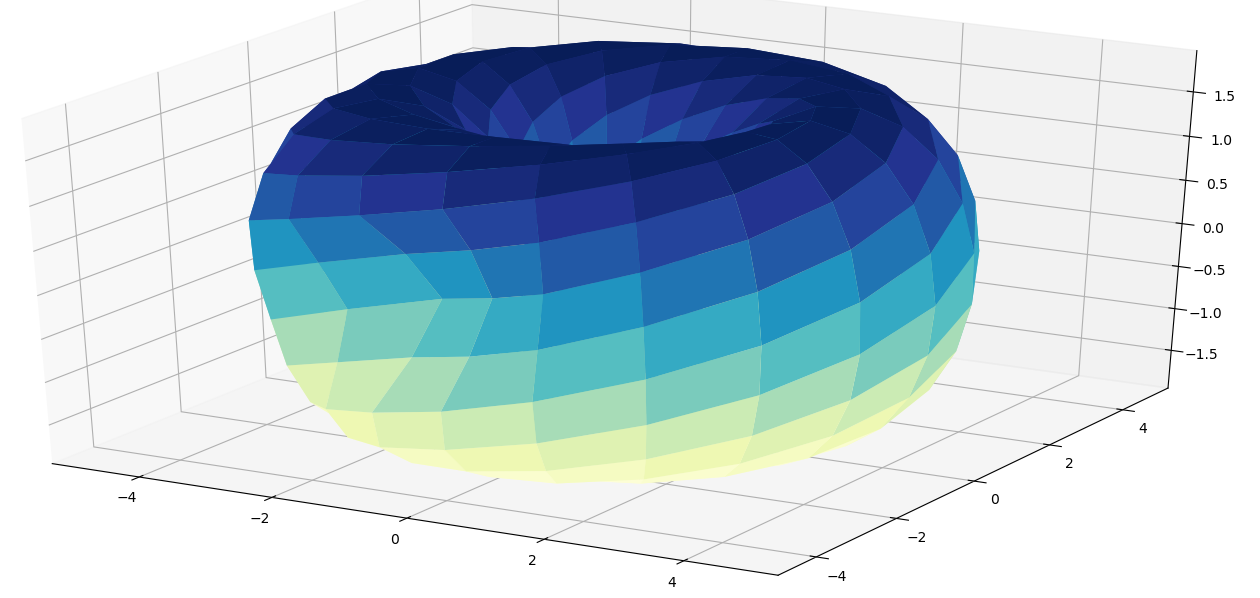}
\caption{After one iteration.}
\label{ris:1}
\end{minipage}
\hfill
\begin{minipage}[h]{0.3\linewidth}
\includegraphics[width=1\linewidth]{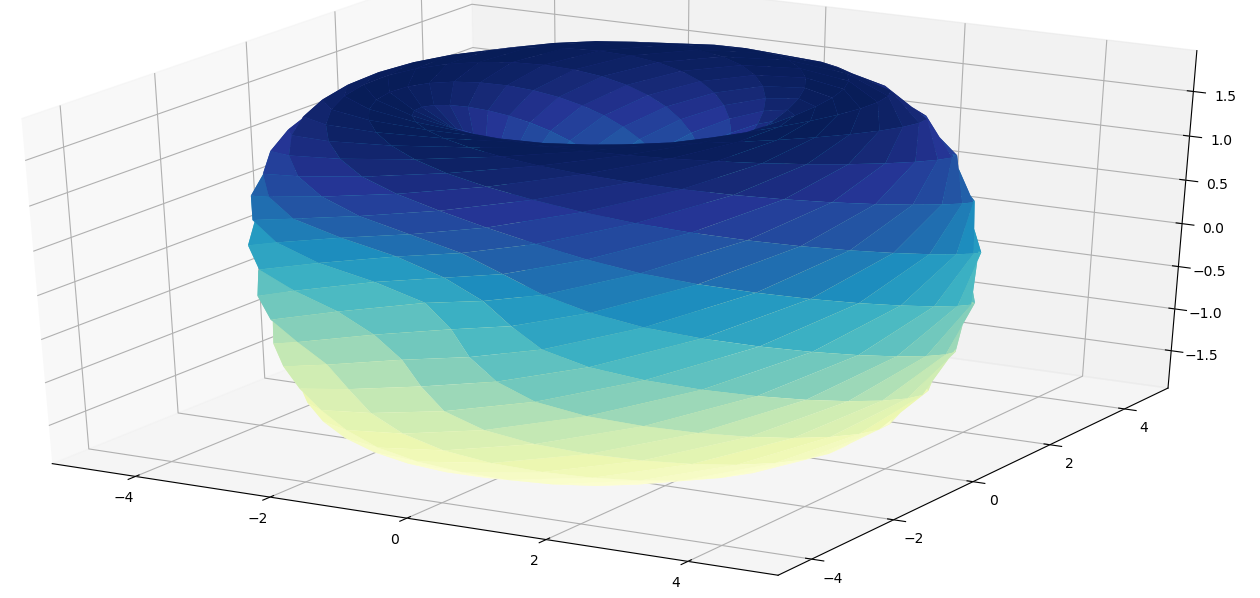}
\caption{After two iterations.}
\label{ris:2}
\end{minipage}
\vfill
\begin{minipage}[h]{0.3\linewidth}
\includegraphics[width=1\linewidth]{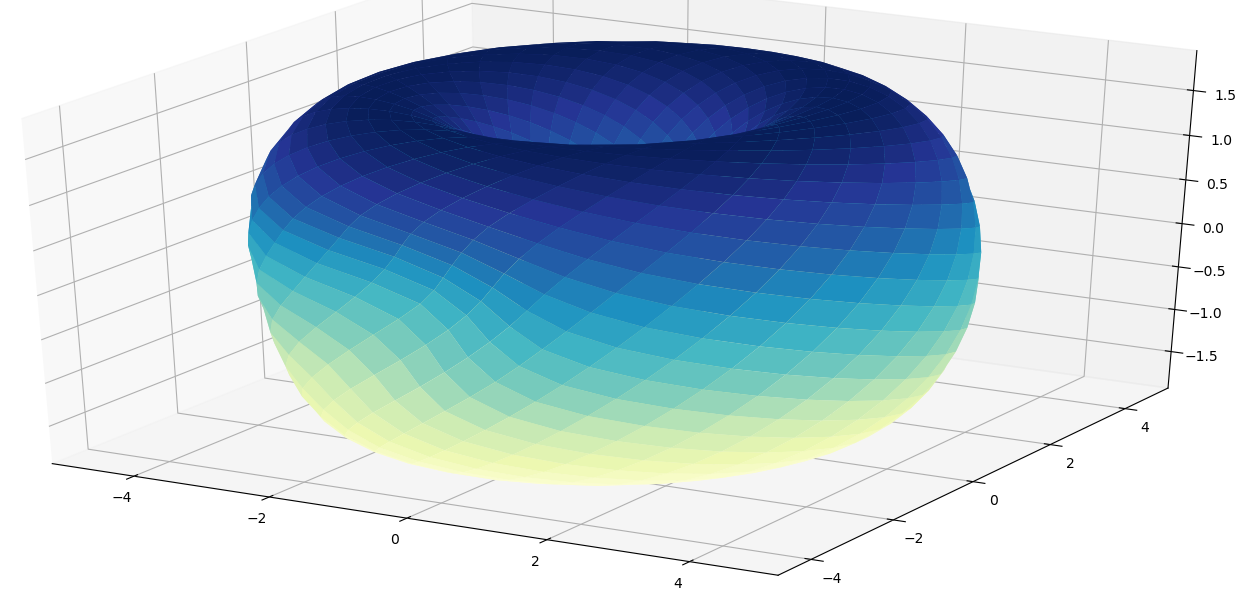}
\caption{After three iterations.}
\label{ris:3}
\end{minipage}
\hfill
\begin{minipage}[h]{0.3\linewidth}
\includegraphics[width=1\linewidth]{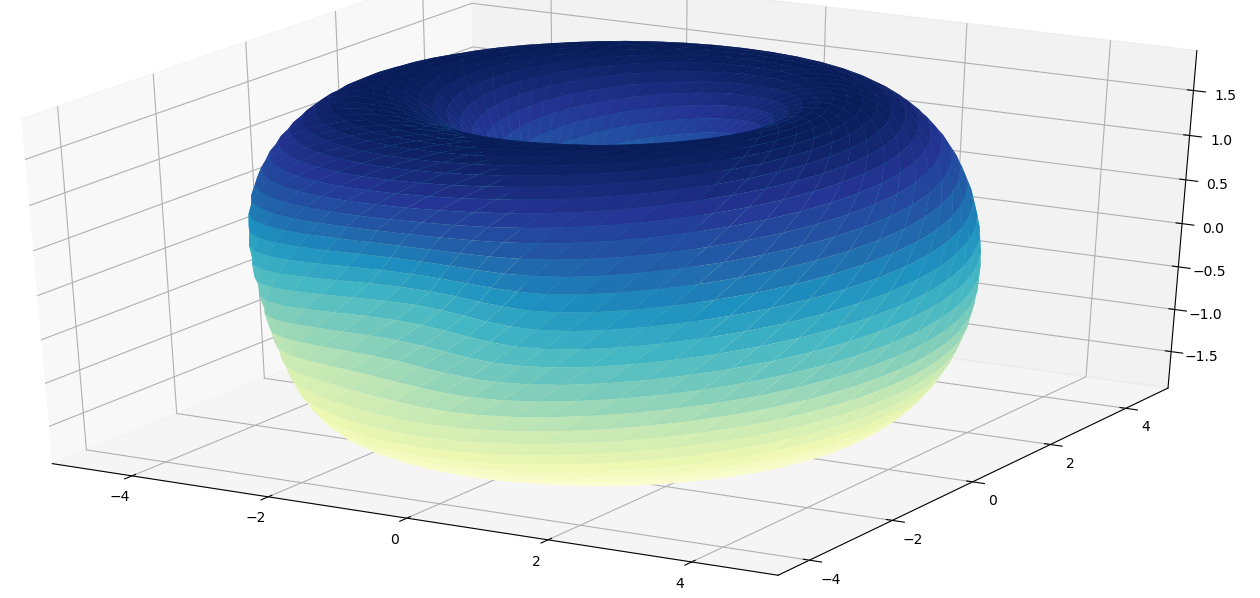}
\caption{After four iterations.}
\label{ris:4}
\end{minipage}
\hfill
\begin{minipage}[h]{0.3\linewidth}
\includegraphics[width=1\linewidth]{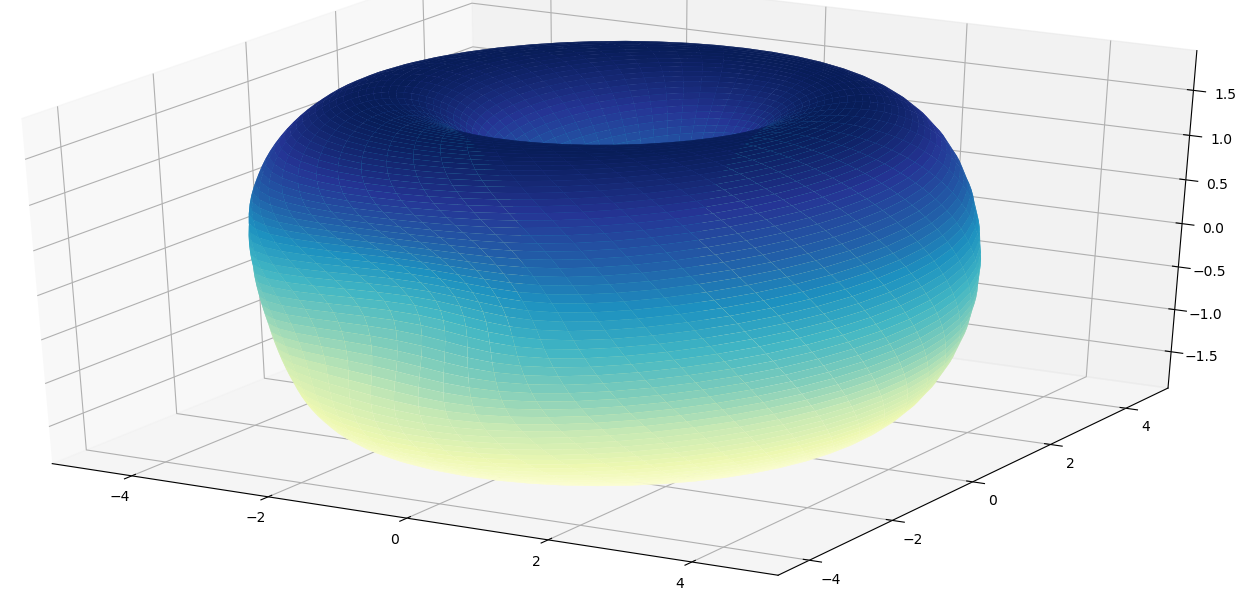}
\caption{After five iterations.}
\label{ris:5}
\end{minipage}
\end{center}
\end{figure}

\begin{figure}[h]
\begin{center}
\begin{minipage}[h]{0.3\linewidth}
\includegraphics[width=1\linewidth]{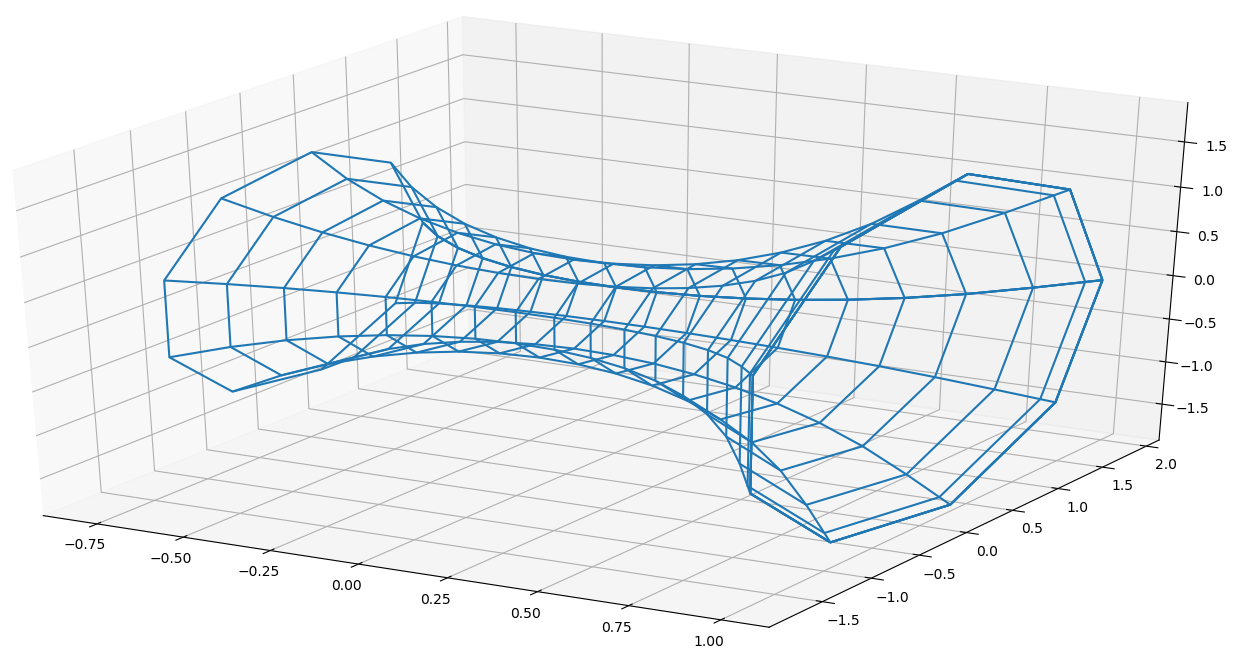}
\caption{Initial approximation.}
\label{ris:100}
\end{minipage}
\hfill
\begin{minipage}[h]{0.3\linewidth}
\includegraphics[width=1\linewidth]{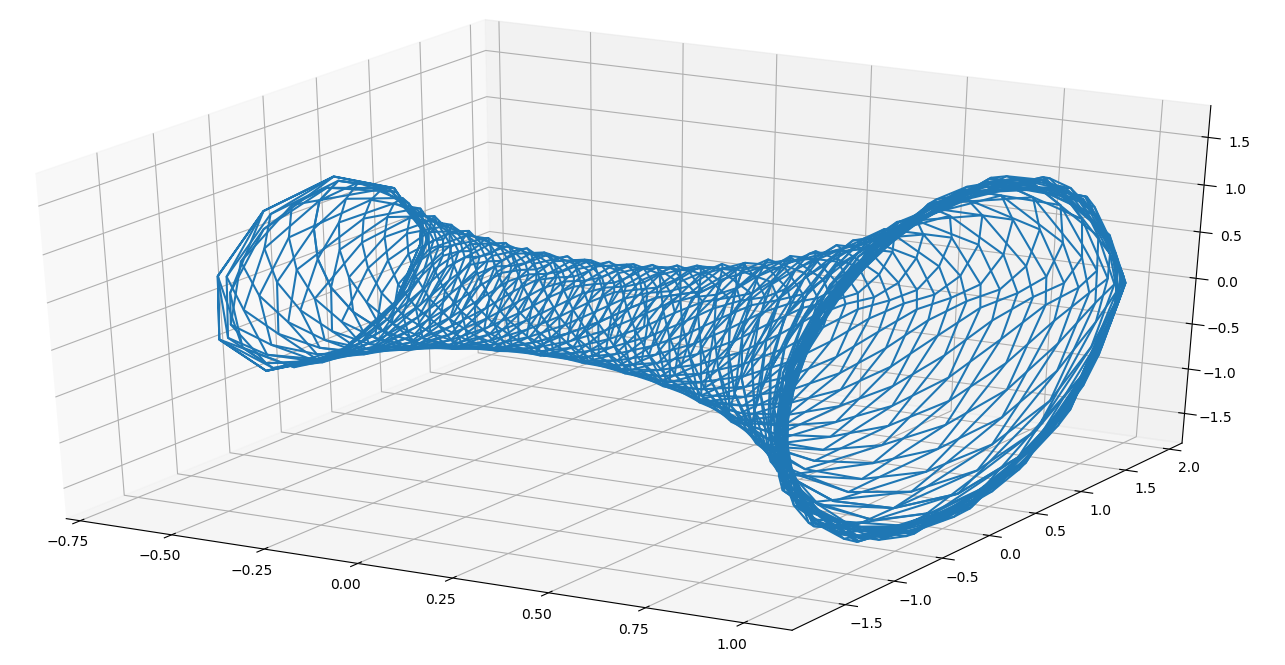}
\caption{After two iterations.}
\label{ris:101}
\end{minipage}
\hfill
\begin{minipage}[h]{0.3\linewidth}
\includegraphics[width=1\linewidth]{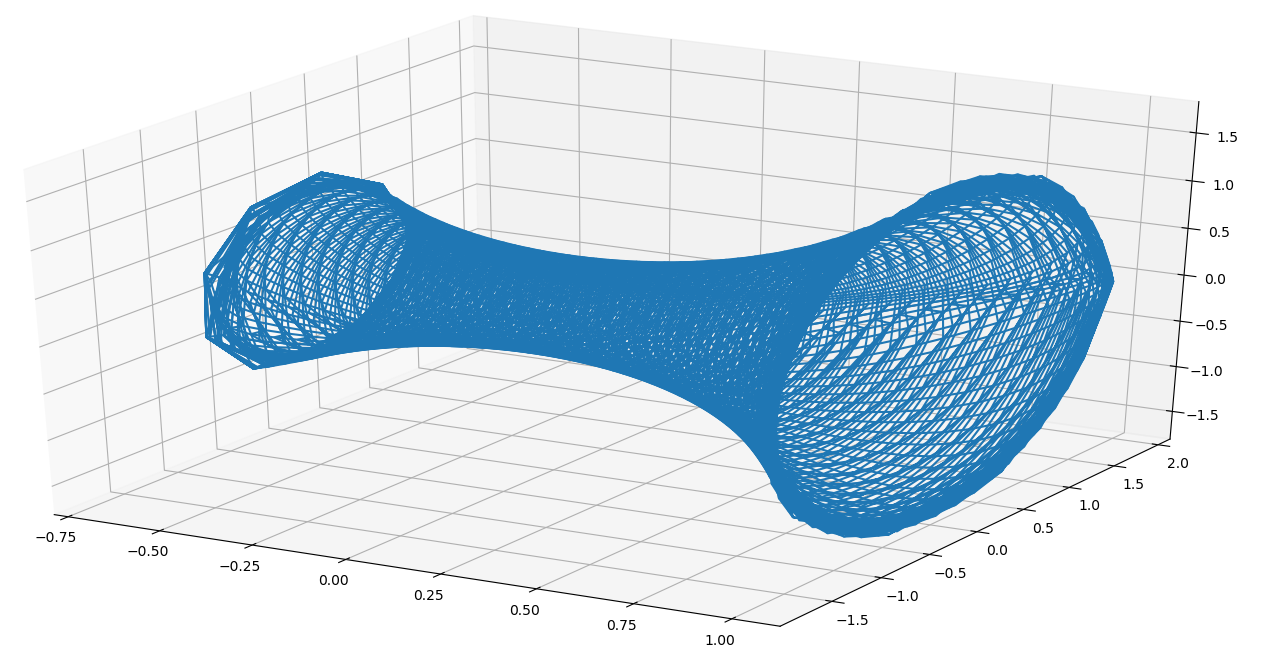}
\caption{After four iterations.}
\label{ris:102}
\end{minipage}
\end{center}
\end{figure}
\end{section}

\begin{section}{Conclusion}
The paper presents an approach for construction and analysis of multivariate B-splines based on convolutions of tiles. We revealed a series of properties of the tile B-splines and investigated in details the case of planar symmetric 2-tiles (Square, Dragon and Bear). They are solutions of refinement equations with a small number of nonzero coefficients that gives them the advantage over the classical multivariate B-splines. The orthogonalization of tile B-splines defines the orthonormal wavelet systems generated by the only wavelet function, for which we obtained the explicit formulas, computed the regularity exponents and estimated the rate of decay at infinity. Using multivariate complex analysis, we estimated the rate of decay and the number of coefficients that are required for approximation of wavelet function with given accuracy. Some of the constructed tile B-splines have a higher regularity than classical B-splines of the same orders. In particular, Bear-4 is three times differentiable in contrast to the corresponding classical B-spline. 
This property is important for applications, in particular, for subdivision algorithms in geometric modeling. For their convergence in $C^n$ we require the  corresponding regularity of the generating function. We provide the examples and numerical results and the implementation for practical application of this work. 
\end{section}

\section{Acknowledgements}
The author is grateful for her advisor V.Yu.~Protasov for his constant support and help in work and for the reviewer for many useful remarks.  
The author is thankful for the developers of the package \cite{Mekh} using which the tiles are constructed.

\appendix

\begin{section}{The tables with coefficients of wavelet functions}

\begin{table}[H]
\small
\begin{center}
\begin{tabular}{c|c|c|c|c|c|c|c|c}
$i$ & 1 & 0 & 3 & 4 & 1 & 3 & 3 & -3\\ \hline
$j$ & 0 & 0 & 0 & 0 & 1 & 1 & -1 & 0\\ \hline
\!\!\!\!\! $c_{i,j}$ \!\!\!\! & 1.15586 & 0.55632 & -0.09441 & -0.06459 & 0.06225 & -0.04478 & -0.03976 & 0.01911 \!\!\!\! \\ \hline \hline

$i$  & 5 & 4 & 4 & 2 & 5 & 0 & -4 & 4\\ \hline
$j$  & 1 & -1 & 1 & -1 & -1 & -2 & 0 & 2 \\ \hline
\!\!\!\!\! $c_{i,j}$ \!\!\!\!  & 0.01591 & 0.01557 & 0.01535 & -0.01304 & 0.01256 & -0.00979 & 0.00935 & 0.00911 \!\!\!\! \\ \hline \hline

$i$  & 2 & -4 & -4 & 6 & -5 & 5 & -5 & 2\\ \hline
$j$  & 1 & -1 & 1 & 2 & -1 & 2 & 1 & -2 \\ \hline
\!\!\!\!\! $c_{i,j}$ \!\!\!\!  & -0.00862 & -0.00644 & -0.00543 & -0.00430 & -0.00418 & -0.0041 & -0.00350 & 0.00339
 \!\!\!\! \\  \hline \hline

 $i$  & 3 & -5 & -5 & -6 & 5 & -6 & 8 & 6\\ \hline
$j$  & 1 & -1 & 1 & 2 & -1 & 2 & 1 & -2 \\ \hline
\!\!\!\!\! $c_{i,j}$ \!\!\!\!  & 0.00306 & -0.00292 & 0.00207 & 0.00172 & -0.00156 & 0.00153 & 0.00151 & -0.00124
 \!\!\!\! \\  \hline \hline

$i$  & 4 & -6 & 5 & 7 & -1 & -7 & 6 & -7\\ \hline
$j$  & 2 & 0 & -2 & -1 & 3 & -2 & -1 & -2  \\ \hline
\!\!\!\!\! $c_{i,j}$ \!\!\!\!  & 0.00123 & -0.00113 & -0.00108 & 0.00102 & -0.00101 & 0.00095 & 0.00091 & 0.00085
 \!\!\!\! \\  \hline \hline

$i$  & -1 & 1 & -5 & -7 & -6 & 4 & 8 & 9\\ \hline
$j$  & -2 & 0 & -2 & 3 & 3 & -1 & 3 & 1  \\ \hline
\!\!\!\!\! $c_{i,j}$ \!\!\!\!  &  0.00081 & 0.00079 & 0.00074 & -0.00073 & 0.00059 & -0.00057 & -0.00057 & -0.00047 
 \!\!\!\! \\  \hline \hline

$i$  & -8 & 4 & 2 & 2 & 10 & 10 & -7 & 5\\ \hline
$j$  & -3 & 3 & 2 & -2 & 2 & 3 & 3 & 3 \\ \hline
\!\!\!\!\! $c_{i,j}$ \!\!\!\!  & -0.00046 & 0.00046 & -0.00045 & -0.00040s & -0.00037 & -0.00036 & -0.00032 & 0.00032
 \!\!\!\! \\  \hline \hline

$i$  & -7 & 1 & 0 & -8 & 10 & 11 & 8 & 6 \\ \hline
$j$  & -2 & -3 & 4 & -3 & 1 & -1 & 2 & -3 \\ \hline
\!\!\!\!\! $c_{i,j}$ \!\!\!\!  &  0.00028 & 0.00027 & 0.00026 & 0.00025 & -0.00022 & 0.00021 & -0.00021 & 0.00019 
 \!\!\!\! \\  \hline \hline

$i$  & -1 & -9 & -9 & 11 & -5 & 4 & 9 & 10 \\ \hline
$j$  & 2 & 1 & -3 & 1 & -4 & 4 & 4 & 4 \\ \hline
\!\!\!\!\! $c_{i,j}$ \!\!\!\!  &  -0.00018 & -0.00018 & 0.00018 & -0.00018 & -0.00016 & 0.00015 & 0.00014 & 0.00012 
 \!\!\!\! \\  \hline \hline

$i$  & 12 & 3 & 11\\ \hline
$j$  & 2 & 5 & -2 \\ \hline
\!\!\!\!\! $c_{i,j}$ \!\!\!\!  & 0.00012 & 0.00011 & 0.00011
 \!\!\!\! \\ \hline \hline

\end{tabular}
\end{center}
\caption{The coefficients of Bear-2 for approximation in $\ell_1$ with the accuracy $0.01$}
\label{tablecoef2_1}
\end{table}

\thispagestyle{empty}

\begin{table}[H]
\small
\begin{center}
\begin{tabular}{c|c|c|c|c|c|c|c|c} 
$i$ & 2 & 1 & 5 & 2 & 0 & 0 & 4 & 6\\ \hline
$j$ & 0 & 0 & 0 & -1 & -1 & 1 & 0 & -1\\ \hline
\!\!\!\!\! $c_{i,j}$ \!\!\!\! & 1.08200 & 0.60379 & -0.13271 & 0.08179 & -0.06971 & -0.06948 & -0.06578 & 0.04453 \!\!\!\! \\ \hline \hline

$i$  & 6 & -3 & -2 & -1 & 8 & 5 & -4 & 3\\ \hline
$j$  & 1 & 0 & 0 & -2 & -1 & -1 & -1 & 2 \\ \hline
\!\!\!\!\! $c_{i,j}$ \!\!\!\!  & 0.04344 & 0.03556 & 0.03408 & 0.02357 & -0.02329 & 0.02213 & -0.02085 & -0.01941 \!\!\!\! \\ \hline \hline

$i$  & -3 & 7 & 5 & -3 & 1 & -5 & -4 & 9\\ \hline
$j$  & -2 & -1 & 1 & -1 & 1 & -2 & 0 & -1 \\ \hline
\!\!\!\!\! $c_{i,j}$ \!\!\!\!  & -0.01931 & -0.01870 & 0.01740 & -0.01580 & -0.01272 & 0.0122 & -0.01128 & 0.01096
 \!\!\!\! \\  \hline \hline

 $i$  & 10 & -5 & 8 & 6 & -6 & 1 & -4 & -5\\ \hline
$j$  & -1 & -1 & 2 & 2 & -1 & 2 & -3 & 0 \\ \hline
\!\!\!\!\! $c_{i,j}$ \!\!\!\!  & 0.01051 & 0.00876 & 0.00874 & -0.00846 & 0.00804 & 0.00796 & 0.00734 & -0.00710
 \!\!\!\! \\  \hline \hline

$i$  & -3 & 9 & -7 & -2 & -6 & -6 & 1 & 11 \\ \hline
$j$  & 2 & -2 & -2 & -3 & -2 & -3 & -1 & -1 \\ \hline
\!\!\!\!\! $c_{i,j}$ \!\!\!\!  & -0.00701 & 0.00677 & -0.00635 & -0.00626 & -0.00607 & -0.00598 & -0.00577 & -0.00505
 \!\!\!\! \\  \hline \hline

$i$  & 0 & 11 & 8 & 10 & 9 & 0 & -8 & -8\\ \hline
$j$  & 3 & -2 & -2 & -2 & 3 & -2 & 1 & -3 \\ \hline
\!\!\!\!\! $c_{i,j}$ \!\!\!\!  & -0.00486 & -0.0049 & 0.00460 & -0.00425 & -0.00408 & 0.00402 & -0.00399 & 0.0040
 \!\!\!\! \\  \hline \hline

$i$  & 6 & -7 & -7 & 7 & -8 & 1 & 12 & 2\\ \hline
$j$  & -3 & -3 & -1 & 3 & -2 & -4 & -2 & -3 \\ \hline
\!\!\!\!\! $c_{i,j}$ \!\!\!\!  & 0.00359 & 0.00347 & -0.00339 & 0.0032 & 0.00323 & -0.00302 & 0.00297 & 0.00293
 \!\!\!\! \\  \hline \hline

$i$  & 13 & -2 & -9 & -1 & -7 & 1 & -9 & -8\\ \hline
$j$  & -2 & 2 & -2 & 2 & -4 & 4 & -3 & -1 \\ \hline
\!\!\!\!\! $c_{i,j}$ \!\!\!\!  & 0.00289 & -0.00280 & 0.00277 & 0.00257 & 0.00237 & 0.00230 & -0.00230 & -0.00228
 \!\!\!\! \\  \hline \hline

$i$  & -10 & -9 & -5 & 4 & 5 & 13 & -1 & -6 \\ \hline
$j$  & -3 & -4 & -4 & 3 & -3 & -1 & -4 & 0 \\ \hline
\!\!\!\!\! $c_{i,j}$ \!\!\!\!  & -0.00221 & -0.00200 & -0.00199 & 0.00198 & 0.00195 & 0.00181 & 0.00177 & 0.00174
 \!\!\!\! \\  \hline \hline

$i$  & 14 & 12 & 7 & -11 & 10 & 8 & -2 & -11 \\ \hline
$j$  & -2 & 4 & -3 & 2 & 4 & -3 & -5 & -4 \\ \hline
\!\!\!\!\! $c_{i,j}$ \!\!\!\!  &  -0.00171 & -0.00161 & -0.00148 & -0.00147 & 0.00145 & -0.00144 & -0.00142 & 0.00138
 \!\!\!\! \\  \hline \hline

$i$  & -10 & 14 & -9 & -1 & 4 & -11 & -10 & 1 \\ \hline
$j$  & -2 & 4 & 0 & -3 & 5 & -3 & 1 & 3 \\ \hline
\!\!\!\!\! $c_{i,j}$ \!\!\!\!  & -0.0014 & 0.00131 & 0.00131 & -0.00129 & 0.00126 & 0.0013 & 0.00113 & -0.00111
 \!\!\!\! \\  \hline \hline

$i$  & -12 & -11 & 14 & -12 & 15 & 14 & -12 & 12 \\ \hline
$j$  & -3 & -2 & 0 & -4 & 0 & -3 & 2 & 0 \\ \hline
\!\!\!\!\! $c_{i,j}$ \!\!\!\!  & 0.00104 & -0.00093 & -0.00091 & -0.00088 & -0.00087 & -0.00085 & 0.00083 & 0.00081 
 \!\!\!\! \\  \hline \hline

\thispagestyle{empty} 

$i$  & -13 & -10 & -9 & -8 & -4 & -5 & -12 & 8 \\ \hline
$j$  & -4 & -5 & -1 & 3 & -5 & 4 & -5 & 4 \\ \hline
\!\!\!\!\! $c_{i,j}$ \!\!\!\!  & -0.00081 & 0.00080 & 0.0008 & 0.00076 & 0.00073 & 0.00072 & -0.00070 & -0.00070
 \!\!\!\! \\  \hline \hline

$i$  & 13 & 5\\ \hline
$j$  & -3 & -4 \\ \hline
\!\!\!\!\! $c_{i,j}$ \!\!\!\!  & -0.00068 & -0.00067
 \!\!\!\! \\ \hline \hline

\end{tabular}
\end{center}
\thispagestyle{empty} 
\caption{The coefficients of Bear-4 for approximation in $\ell_2$ with the accuracy $0.01$}
\label{tablecoef4_2}
\end{table}

\end{section}

\bigskip

\end{document}